\tikzstyle{vertex}=[draw,thick,fill=white,circle,inner sep=2pt]
\tikzstyle{tiny_vertex}=[draw,thick,fill=white,circle,inner sep=1.5pt]
\tikzstyle{full}=[draw,thick,fill=black,circle,inner sep=2pt]
\tikzstyle{empty}=[draw,color=black!40!white,thick,fill=white,circle,inner sep=2pt]
\newtheorem{thm}{Theorem}[section]
\newtheorem{cor}[thm]{Corollary}
\newtheorem{lem}[thm]{Lemma}
\newtheorem{prop}[thm]{Proposition}
\newtheorem{obs}[thm]{Observation}
\theoremstyle{definition}
\newtheorem{defn}[thm]{Definition}
\DeclareMathOperator{\dist}{dist}
\DeclareMathOperator{\spec}{spec}
\DeclareMathOperator{\sign}{sign}
\DeclareMathOperator{\diam}{diam}
\newcommand{\Dq}{\mathcal{D}_q}
\newcommand{\sq}{\, \square \,}
\newcommand{\fsp}{\,\,\,\,\,}
\newcommand{\vxi}[2]{\!\!\!\underset{(#1,#2)}{\circ}\!\!\!}
\title{Spectral properties of the exponential distance matrix}
\author{Steve Butler\footnote{Iowa State University, Ames, IA, USA \texttt{\{butler,lorenkj\}@iastate.edu}}
\and Elizabeth Cooper\footnote{Oberlin College, Oberlin, OH, USA \texttt{pcooper@oberlin.edu}}
\and Aaron Li\footnote{Carleton College, Northfield, MN, USA \texttt{lia2@carleton.edu}}
\and Kate Lorenzen\footnotemark[1]
\and Zo\"e Schopick\footnote{Macalester College, Saint Paul, MN, USA \texttt{zschopic@macalester.edu}}}
\date{\empty}
\begin{document}
\maketitle

\begin{abstract}
Given a graph $G$, the exponential distance matrix is defined entry-wise by letting the $(u,v)$-entry be $q^{\dist(u,v)}$, where $\dist(u,v)$ is the distance between the vertices $u$ and $v$ with the convention that if vertices are in different components, then $q^{\dist(u,v)}=0$. In this paper, we will establish several properties of the characteristic polynomial (spectrum) for this matrix, give some families of graphs which are uniquely determined by their spectrum, and produce cospectral constructions.

\end{abstract}

\section{Introduction}
A simple graph $G$ is a set of vertices, $V(G)$, and a set of edges, $E(G)$, that connect distinct vertices to each other.  A graph can be represented by a matrix in multiple ways.  Once we have a matrix $M$ we can then look at the characteristic polynomial, or the set of eigenvalues (spectrum, denoted $\spec(M)$), of the matrix. Spectral graph theory is the study of information that connects a graph and the spectrum of the matrix $M$.

The most studied matrix associated with a graph $G$ is the \emph{adjacency matrix}, $A^G=A$, whose entries are defined as follows. 
\begin{equation*}
    A_{u,v} = 
    \begin{cases}
          1 & \text{if }u \sim v,\\
          0 & \text{otherwise},\\
    \end{cases}
\end{equation*}
where $u\sim v$ indicates that $u$ and $v$ are adjacent \cite{CDS}.

In addition to the adjacency matrix, many other matrices have been explored.  Among them is the \emph{distance matrix}, $\mathcal{D}^G=\mathcal{D}$, whose entries are defined as follows.
\begin{equation*}
    \mathcal{D}_{u,v} = \dist_G(u,v),
\end{equation*}
where $\dist_G(u,v)$ is the distance between vertices $u$ and $v$ in $G$.  For vertices in two distinct components the usual convention is to have $\dist_G(u,v)=\infty$.  As a result, the distance matrix is almost always only considered for connected graphs.  Extensive lists of properties and results for the distance matrix can be found in a survey \cite{AH}.

Variations of the distance matrix have been considered.  Bapat, Lal and Pati \cite{bapat} introduced the exponential distance matrix, $\Dq^G=\Dq$, whose entries are defined as follows.
\begin{equation*}
(\Dq)_{u,v}=\begin{cases}
q^{\dist_G(u,v)}&\text{if $u$ and $v$ in same component,}\\
0&\text{otherwise},
\end{cases}
\end{equation*}
where $q$ is a variable.  If we restrict $q$ to the interval $(-1,1)$, then the definition can simplify to $(\Dq)_{u,v} = q^{\dist(u,v)}$ since by convention $q^\infty=0$.  This allows us to work with disconnected graphs.

Previous work for the exponential distance matrix \cite{bapat,YanYeh} has been restricted to trees.  However, this matrix has many rich and interesting properties that we will explore in this paper.  In Section~\ref{sec:ops}, we will look at graph operations as they relate to $\Dq$ and connect the spectrum of $\Dq$ to the spectra of $A$ and $\mathcal{D}$ in certain situations.  In Section~\ref{sec:poly}, we look at some information about the graph that can be derived from the characteristic polynomial of $\Dq$ and use this to show how some families are determined by their spectrum.  In Section~\ref{sec:cospec}, we give several constructions for cospectral graphs. Finally, in Section~\ref{sec:future}, we will give concluding remarks and open problems.

\section{Graph operations and connections to other spectra}\label{sec:ops}

In this section we develop tools to determine the spectrum of the exponential distance matrix and then use these tools to find the spectrum of certain families of graphs. 

\subsection{Graph operations}
We begin by examining some basic graph operations, as many useful graph families can be constructed via graph operations.


\begin{prop}\label{union}
    Let $G \cup H$ denote the disjoint union of $G,H$. Then \[\spec(\Dq^{G \cup H})= \spec(\Dq^G) \cup \spec(\Dq^H).\]
\end{prop}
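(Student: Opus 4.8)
The plan is to exhibit $\Dq^{G\cup H}$ as a block-diagonal matrix whose diagonal blocks are $\Dq^G$ and $\Dq^H$. First I would fix an ordering of $V(G\cup H)$ in which every vertex of $G$ precedes every vertex of $H$; passing to this ordering only conjugates the matrix by a permutation matrix, and hence leaves the spectrum unchanged. With this ordering, I would examine the three types of entries: for $u,v\in V(G)$ we have $\dist_{G\cup H}(u,v)=\dist_G(u,v)$, since a shortest $u$--$v$ path must stay inside the component containing $u$ and $v$, so the top-left block is exactly $\Dq^G$; symmetrically the bottom-right block is $\Dq^H$; and for $u\in V(G)$, $v\in V(H)$ the two vertices lie in different components, so the convention $q^{\dist(u,v)}=q^{\infty}=0$ forces the off-diagonal blocks to vanish. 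Thus
\[
\Dq^{G\cup H}=\begin{pmatrix}\Dq^G & 0\\ 0 & \Dq^H\end{pmatrix}.
\]

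From here the conclusion is immediate: the characteristic polynomial of a block-diagonal matrix is the product of the characteristic polynomials of its blocks, so $\det(xI-\Dq^{G\cup H})=\det(xI-\Dq^G)\cdot\det(xI-\Dq^H)$, and reading off the roots with multiplicity gives $\spec(\Dq^{G\cup H})=\spec(\Dq^G)\cup\spec(\Dq^H)$ interpreted as a union of multisets.

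I do not anticipate any genuine obstacle here; the statement is essentially a one-line block-matrix observation. The only points worth stating carefully are that the ``$\cup$'' on the right-hand side is a multiset union, so eigenvalues are counted with their multiplicities, and that the vanishing of the cross blocks is precisely the content of the convention $q^{\infty}=0$, which is legitimate in this setting because we restrict to $q\in(-1,1)$.
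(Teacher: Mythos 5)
Your proof is correct and follows exactly the same route as the paper: order the vertices so that $\Dq^{G\cup H}$ is block diagonal with blocks $\Dq^G$ and $\Dq^H$, then read off the spectrum as the union of the spectra of the blocks. The extra care you take in justifying the zero off-diagonal blocks via the $q^\infty=0$ convention and in noting the multiset interpretation of the union is a welcome elaboration of details the paper leaves implicit.
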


\begin{proof}
     The exponential distance matrix of $G \cup H$, denoted $\Dq^{G \cup H}$, can be written in the following block form 
     \begin{align*}
         \Dq^{G \cup H} = 
         \left(\begin{array}{cc}
            \Dq^G
            & O\\
            O &
            \Dq^H
        \end{array}\right).
     \end{align*}
     Since the eigenvalues of a block diagonal matrix are the eigenvalues of the blocks themselves, the spectrum of $\Dq^{G \cup H}$ is the union of the spectra of $\Dq^G$ and $\Dq^H$.
\end{proof}

The result of the union operation is what would be expected. We now consider an operation that has an unexpected result based on what happens with the adjacency and distance matrices. 

\begin{defn}
    The \emph{Cartesian product} of two graphs $G,H$, denoted $G \sq H$, has vertices $V(G \sq H) = \{(u,v) \, \mid \, u \in V(G), v \in V(H)\}$ and $(u_1, v_1) \sim (u_2, v_2)$ if and only if
    \begin{align*}
        \big(\{v_1,v_2\} \in E(H),u_1=u_2\big) \textbf{ or } 
        \big(\{u_1,u_2\} \in E(G), v_1=v_2\big).
    \end{align*}
\end{defn}
\begin{lem}
    We have $\Dq^{G \sq H} = \Dq^G \otimes \Dq^H$, where $\otimes$ denotes the tensor product.
\end{lem}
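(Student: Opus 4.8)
The plan is to verify the claimed identity entry-by-entry, using the standard description of distances in a Cartesian product. Recall that for $(u_1,v_1),(u_2,v_2)\in V(G\sq H)$, the distance decomposes as
\[
\dist_{G\sq H}\big((u_1,v_1),(u_2,v_2)\big)=\dist_G(u_1,u_2)+\dist_H(v_1,v_2),
\]
with the usual convention that the sum is $\infty$ if either summand is $\infty$ (i.e.\ the two vertices lie in different components of $G\sq H$ exactly when $u_1,u_2$ lie in different components of $G$ or $v_1,v_2$ lie in different components of $H$). This is a well-known fact about Cartesian products, so I would state it and cite it (or give the one-line argument that a shortest path can be taken to first fix the second coordinate and move in a $G$-copy, then fix the first coordinate and move in an $H$-copy).

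Next I would index the rows and columns of $\Dq^{G\sq H}$ by the pairs $(u,v)$ in the same order that the tensor product $\Dq^G\otimes\Dq^H$ uses, so that the $\big((u_1,v_1),(u_2,v_2)\big)$-entry of $\Dq^G\otimes\Dq^H$ is by definition $(\Dq^G)_{u_1,u_2}\cdot(\Dq^H)_{v_1,v_2}$. The heart of the computation is then the case analysis on whether the relevant vertices lie in the same component. If $u_1\sim\!\!\!\!\!\cdot\, u_2$ in $G$ and $v_1,v_2$ are in the same component of $H$, then
\[
(\Dq^{G\sq H})_{(u_1,v_1),(u_2,v_2)}
= q^{\dist_G(u_1,u_2)+\dist_H(v_1,v_2)}
= q^{\dist_G(u_1,u_2)}\,q^{\dist_H(v_1,v_2)}
= (\Dq^G)_{u_1,u_2}(\Dq^H)_{v_1,v_2},
\]
which is exactly the corresponding entry of the tensor product. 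If instead $u_1,u_2$ lie in different components of $G$ (or $v_1,v_2$ in different components of $H$), then the left side is $0$ by definition, and the right side is $0$ because the corresponding factor $(\Dq^G)_{u_1,u_2}$ (respectively $(\Dq^H)_{v_1,v_2}$) is $0$. Since the two matrices agree in every entry, they are equal.

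I do not expect a serious obstacle here; the only point requiring care is bookkeeping about components and the convention $q^\infty=0$, making sure that ``different component'' on the product side corresponds precisely to ``different component in $G$ or in $H$,'' and that the vertex orderings for $\Dq^{G\sq H}$ and for $\Dq^G\otimes\Dq^H$ are chosen consistently so that the entrywise comparison is literally entry-for-entry. The mild subtlety worth flagging explicitly is that this multiplicativity works for $\Dq$ precisely because distances add in a Cartesian product while the exponential converts that sum into a product — this is exactly why the behavior differs from the adjacency and distance matrices, and it is worth a sentence of remark after the proof.
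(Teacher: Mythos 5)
Your proposal is correct and follows essentially the same route as the paper: both rest on the additivity of distances in the Cartesian product, $\dist_{G \sq H}\big((u_1,v_1),(u_2,v_2)\big)=\dist_G(u_1,u_2)+\dist_H(v_1,v_2)$, and then observe that exponentiating turns this sum into the entrywise product defining $\Dq^G\otimes\Dq^H$. Your explicit handling of the disconnected case via the convention $q^\infty=0$ is a small point of extra care that the paper's proof leaves implicit, but it does not change the argument.
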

\begin{proof}
   Let $(u_i, v_j) \in V(G \sq H)$, where $u_i \in V(G)$ and $v_j \in V(H)$. Since a path in $G \sq H$ consists of a combination of a path in $G$ and a path in $H$, we have 
    \begin{align*}
        \dist_{G \sq H}\big((u_i,v_k),(u_j,v_\ell)\big) = \dist_G(u_i,u_j) + \dist_H(v_k,v_\ell).
    \end{align*}
    This implies that \begin{align*}
        q^{\dist_{G \sq H}\big((u_i,v_k),(u_j,v_\ell)\big)} = q^{\dist_G(u_i,u_j)} q^{ \dist_H(v_k,v_\ell)}.
    \end{align*}
    If we examine entry-wise, this is the tensor product. Thus $\Dq^{G \sq H}=\Dq^G \otimes \Dq^H$. 
\end{proof}

Since the eigenvalues of the tensor of two matrices are found by taking all possible products of the eigenvalues of the original matrices, we have the following.

\begin{thm}\label{cart}
   We have $\spec(\Dq^{G \sq H})=\{\lambda \mu \, \mid \, \lambda \in \spec(\Dq^G), \mu \in \spec(\Dq^H)\}$.
\end{thm}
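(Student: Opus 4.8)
The plan is to derive the statement directly from the preceding lemma, which identifies $\Dq^{G \sq H}$ with the tensor product $\Dq^G \otimes \Dq^H$, together with the standard fact about the spectrum of a tensor (Kronecker) product of matrices. So the proof is essentially a one-line deduction, but to make it self-contained I would briefly recall why that standard fact holds.

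First I would state the eigenvalue property of the tensor product: if $M$ is an $m \times m$ matrix with eigenvalue $\lambda$ and eigenvector $x$, and $N$ is an $n \times n$ matrix with eigenvalue $\mu$ and eigenvector $y$, then $M \otimes N$ has $x \otimes y$ as an eigenvector with eigenvalue $\lambda \mu$, since $(M \otimes N)(x \otimes y) = (Mx) \otimes (Ny) = (\lambda x) \otimes (\mu y) = \lambda \mu (x \otimes y)$. Running over a basis of eigenvectors of $M$ and a basis of eigenvectors of $N$ (available here because $\Dq^G$ and $\Dq^H$ are real symmetric, hence diagonalizable) produces $mn$ linearly independent eigenvectors of $M \otimes N$, so these products $\lambda\mu$ are exactly all the eigenvalues, counted with multiplicity. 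Then I would apply this with $M = \Dq^G$ and $N = \Dq^H$ and invoke the lemma $\Dq^{G \sq H} = \Dq^G \otimes \Dq^H$ to conclude $\spec(\Dq^{G \sq H}) = \{\lambda\mu \mid \lambda \in \spec(\Dq^G),\ \mu \in \spec(\Dq^H)\}$.

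There is essentially no obstacle: the only point requiring a modicum of care is the multiplicity bookkeeping (ensuring we get all $mn$ eigenvalues, not just a subset), which is handled cleanly by the symmetry and hence diagonalizability of exponential distance matrices, or alternatively by citing the determinant identity $\det(M \otimes N - \lambda I) = \prod_{i,j}(\lambda_i \mu_j - \lambda)$. I would phrase the proof to lean on the diagonalizability argument since it is the most transparent and the symmetry of $\Dq$ is immediate from the definition.
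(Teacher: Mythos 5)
Your proposal is correct and follows the same route as the paper, which simply combines the preceding lemma $\Dq^{G \sq H} = \Dq^G \otimes \Dq^H$ with the standard fact that the eigenvalues of a tensor product are the pairwise products of the eigenvalues. Your additional justification of that standard fact via eigenvectors and diagonalizability of the (symmetric) matrices is a fine elaboration but not a different argument.
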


This is a fairly notable difference between the $\Dq$ matrix and the adjacency matrix. The adjacency matrix of the Cartesian product, $A^{G \sq H}$ is equivalent to $I_{|G|} \otimes A^H + A^G \otimes I_{|H|}$. The resulting matrix has eigenvalues that are the pairwise sums of the eigenvalues for as opposed to the pairwise products.

\begin{prop}\label{Prop:cutvertexcharpoly}
    If a graph $G$ has a cut vertex $u$ with a path of length two extending from it with $u \sim v_1$ and $v_1\sim v_2$ (see Figure~\ref{fig:cutvertexcharpoly}), then the characteristic polynomial satisfies \[P_{\Dq,G}(x) = \big((q^2+1)x-1 + q^2\big)P_{\Dq,G\setminus{v_2}}(x) - \big(q^2x^2\big)P_{\Dq,G\setminus \{v_1,v_2\}}(x).\] 
\end{prop}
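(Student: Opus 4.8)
The plan is to compute $P_{\Dq,G}(x)=\det(xI-\Dq^G)$ directly, exploiting the rigidity imposed by the cut vertex. Write $G'=G\setminus v_2$ and $G''=G\setminus\{v_1,v_2\}$, and order $V(G)$ so that the vertices of $G''$ come first, then $v_1$, then $v_2$. The structural observation that makes everything work is that, since $u$ is a cut vertex and $v_1,v_2$ form a pendant path hanging off $u$, for every $w\in V(G'')$ a shortest $w$--$v_1$ path (resp.\ a shortest $w$--$v_2$ path) must pass through $u$ and then continue out along the pendant path; hence $\dist_G(w,v_1)=\dist_G(w,u)+1$ and $\dist_G(w,v_2)=\dist_G(w,u)+2$, and moreover deleting $v_1,v_2$ changes none of the distances internal to $G''$. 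Letting $\mathbf{c}$ be the column of $\Dq^{G''}$ indexed by $u$ (so $\mathbf{c}_w=q^{\dist_G(w,u)}$), this yields the block forms
\begin{align*}
\Dq^G=\begin{pmatrix}\Dq^{G''}&q\mathbf{c}&q^2\mathbf{c}\\ q\mathbf{c}^{T}&1&q\\ q^2\mathbf{c}^{T}&q&1\end{pmatrix},
\qquad
\Dq^{G'}=\begin{pmatrix}\Dq^{G''}&q\mathbf{c}\\ q\mathbf{c}^{T}&1\end{pmatrix}.
\end{align*}

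Next I would apply to $xI-\Dq^G$ the row operation that replaces the $v_2$-row by (the $v_2$-row)~$-\,q\cdot$(the $v_1$-row). Because the block of $\mathbf{c}$-data attached to $v_2$ is exactly $q$ times the block attached to $v_1$, this wipes out every $V(G'')$-entry of the $v_2$-row, turning it into $(\mathbf{0}^{T},\,-qx,\,x-1+q^2)$ while leaving the determinant unchanged. Expanding along this modified row splits $P_{\Dq,G}(x)$ into two pieces: the $(v_2,v_2)$-cofactor, which is literally $\det(xI-\Dq^{G'})=P_{\Dq,G'}(x)$, contributes $(x-1+q^2)P_{\Dq,G'}(x)$; and the $(v_2,v_1)$-cofactor, whose associated minor is $\det\begin{pmatrix}xI-\Dq^{G''}&-q^2\mathbf{c}\\ -q\mathbf{c}^{T}&-q\end{pmatrix}$, contributes the remaining term (with the entry $-qx$ as the extra factor coming from that row).

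It then remains only to rewrite that last minor in terms of $P_{\Dq,G''}$ and $P_{\Dq,G'}$. Pulling $-q$ out of its final column and using multilinearity of the determinant in the last column — splitting the $v_1$-diagonal entry $x-1$ of $xI-\Dq^{G'}$ as $x+(-1)$ — identifies this minor, up to an explicit sign, as $xP_{\Dq,G''}(x)-P_{\Dq,G'}(x)$. Substituting everything back into the cofactor expansion and collecting terms produces exactly $\big((q^2+1)x-1+q^2\big)P_{\Dq,G'}(x)-q^2x^2P_{\Dq,G''}(x)$, as claimed. The only place where care is genuinely needed is the cofactor-sign bookkeeping, together with correctly matching each minor to the characteristic polynomial of $G'$ or of $G''$; the distance identities of the first paragraph and the column-multilinearity step of the last are routine once the block set-up is fixed.
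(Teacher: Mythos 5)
Your proof is correct and follows essentially the same route as the paper's: both exploit the fact that, over $V(G'')$, the $v_2$-row of $x I-\Dq^G$ is $q$ times the $v_1$-row, perform the row operation subtracting $q$ times the $v_1$-row, and finish by cofactor expansion along the resulting sparse row. The only (immaterial) difference is that the paper also performs the symmetric column operation, which lets both cofactors be read off directly as $P_{\Dq,G'}$ and $x P_{\Dq,G''}$ without your final multilinearity split of the last column; your sign and minor bookkeeping all check out.
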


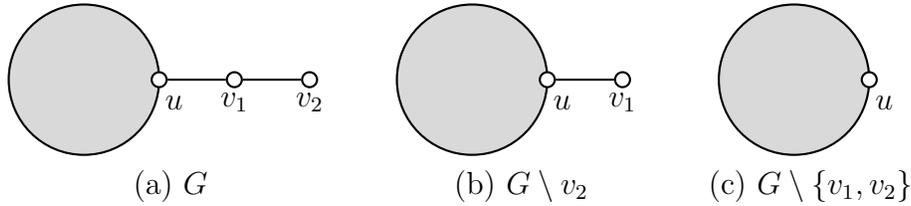
\begin{figure}[!htb]
\centering
\begin{tabular}{c@{\qquad}c@{\qquad}c}
    \begin{tikzpicture}
    \draw[black, thick,fill=black!15!white] (0,0) circle (1);
    \node[vertex] (v) at (1,0) {};
    \node at (1.2,-.3) {$u$};
    \node[vertex] (1) at (2,0) {};
    \node at (2,-.3) {$v_1$};
    \node[vertex] (0) at (3,0) {};
    \node at (3,-.3) {$v_2$};
    \draw[black, thick] (v) -- (1) -- (0);
    \end{tikzpicture}
&
    \begin{tikzpicture}
    \draw[black, thick,fill=black!15!white] (0,0) circle (1);
    \node[vertex] (v) at (1,0) {};
    \node at (1.2,-.3) {$u$};
    \node[vertex] (1) at (2,0) {};
    \node at (2,-.3) {$v_1$};
    \draw[black, thick] (v) -- (1);
    \end{tikzpicture}
&
    \begin{tikzpicture}
    \draw[black, thick,fill=black!15!white] (0,0) circle (1);
    \node[vertex] (v) at (1,0) {};
    \node at (1.2,-.3) {$u$};
    \end{tikzpicture}
\\
(a) $G$ & (b) $G\setminus v_2$ & (c) $G\setminus\{v_1,v_2\}$
\end{tabular}
\caption{The graphs for Proposition~\ref{Prop:cutvertexcharpoly}.}
\label{fig:cutvertexcharpoly}
\end{figure}

\begin{proof}
Let us start by partitioning our exponential distance matrix such that $v_1, v_2, u$ in that order are the first three columns and rows. Now, let us consider the determinant of $xI-\Dq$ (which is the characteristic polynomial) and perform row and column operations which do not affect the determinant (thus we do not affect our characteristic polynomial); namely, adding $-q$ times the second row/column to the first row/column. 

Since $u$ is a cut vertex, the shortest path from $v_1,v_2$ to the other vertices of the graph goes through $u$.  
 \begin{align*}
        \det(xI-\Dq) &= \det
        \left(\begin{array}{@{}ccc|ccc@{}}
    x-1 & -q & -q^2 & -q^2 \vec y \\
    -q & x-1 & -q & -q \vec y \\
    -q^2 & -q & x-1 &  -\vec y  \\\hline
    -q^2 \vec y^T & -q \vec y^T & -\vec y^T & M \\
        \end{array}\right) \\
        &= \det
        \left(\begin{array}{@{}ccc|ccc@{}}
    (q^2+1)x-1 & -qx & 0 & \mathbf{0}^T \\
    -qx & x-1 & -q & -q\vec y\\
    0 & -q & x-1 & -\vec y\\\hline
    \mathbf{0} & -q\vec y^T & -\vec y^T & M \\
        \end{array}\right).
\end{align*}
Using co-factor expansion along the first row, the result follows. 
\end{proof}

\begin{cor} \label{Cor:fkRecursion}
    If a graph $G$ has a cut vertex $u$ with a path of length $k$ extending from it with $u \sim v_1$, and graphs $H_1 = G\setminus \{v_2, \dots , v_{k}\}$, $H_2 = G\setminus \{v_1,\dots , v_{k}\}$ (See Figure~\ref{fig:fkRecursion}), then the characteristic polynomial satisfies 
    \[
    P_{\Dq,G}(x) = f_kP_{\Dq,H_1}(x) -q^2x^2f_{k-1}P_{\Dq,H_2}(x),
    \] 
    for $f_k$ with the following values.
    
    \[f_k = 
\begin{cases} 
      0 &  \text{if }k = 0, \\
      1 & \text{if } k = 1,\\
      [(q^2+1)x - 1 + q^2] &  \text{if }k = 2,\\
      f_2f_{k-1} - q^2x^2 f_{k-2} & \text{if }k > 2.
   \end{cases}
\]


\end{cor}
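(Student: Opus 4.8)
The plan is to induct on $k$, using Proposition~\ref{Prop:cutvertexcharpoly} as the base case and driving engine. The key observation is that applying Proposition~\ref{Prop:cutvertexcharpoly} to the graph $G$ (whose cut vertex $u$ now has a pendant path of length $k$) involves deleting the last vertex $v_k$ and the last two vertices $v_{k-1}, v_k$. The graph $G \setminus v_k$ is precisely a graph with a cut vertex $u$ having a pendant path of length $k-1$, and $G \setminus \{v_{k-1}, v_k\}$ has a pendant path of length $k-2$; both share the same ``core'' $H_1$ (and $H_2$) after all the path vertices are stripped away. So the recursion for $P_{\Dq, G}$ unwinds into a linear recurrence in $k$ whose coefficients are exactly the $f_k$.

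Concretely, first I would set up notation: let $G^{(j)}$ denote the graph obtained from $H_1$ by attaching a pendant path of length $j$ at $u$ (so $G^{(k)} = G$, $G^{(1)} = H_1$, $G^{(0)} = H_2$), and let $p_j = P_{\Dq, G^{(j)}}(x)$. Proposition~\ref{Prop:cutvertexcharpoly}, applied at the far end of the path in $G^{(j)}$ for $j \geq 2$, gives
\[
p_j = \big((q^2+1)x - 1 + q^2\big) p_{j-1} - q^2 x^2 p_{j-2} = f_2 \, p_{j-1} - q^2 x^2 \, p_{j-2}.
\]
Then I would prove by strong induction on $k \geq 1$ the claim $p_k = f_k p_1 - q^2 x^2 f_{k-1} p_0$. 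The base cases $k = 1$ ($f_1 = 1$, $f_0 = 0$, giving $p_1 = p_1$) and $k = 2$ (this is exactly Proposition~\ref{Prop:cutvertexcharpoly} with $f_2$ as defined and $f_1 = 1$) are immediate. For the inductive step with $k > 2$, substitute the inductive hypotheses for $p_{k-1}$ and $p_{k-2}$ into the displayed recursion, collect the coefficient of $p_1$ and the coefficient of $-q^2 x^2 p_0$ separately, and recognize each as $f_2 f_{k-1} - q^2 x^2 f_{k-2} = f_k$ and $f_2 f_{k-2} - q^2 x^2 f_{k-3} = f_{k-1}$ respectively, using the defining recurrence for the $f_j$.

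The one genuinely substantive point — not a routine calculation — is justifying that Proposition~\ref{Prop:cutvertexcharpoly} actually applies to $G^{(j)}$ at the \emph{outer} end of the path, i.e.\ that deleting $v_j$ and $v_{j-1}$ from $G^{(j)}$ leaves $u$ (or rather $v_{j-2}$) as a cut vertex with the right local structure, and that the distances used in the proof of Proposition~\ref{Prop:cutvertexcharpoly} still go through: the shortest path from $v_j, v_{j-1}$ to any other vertex of $G^{(j)}$ passes through $v_{j-2}$, which plays the role of ``$u$'' in that proposition. This is clear since the path is pendant, but it should be stated. Everything else is bookkeeping: matching the two coefficient sequences against the piecewise definition of $f_k$, and checking the degenerate indices ($f_0 = 0$ making the $k=1$ case collapse correctly). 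I expect the coefficient-collecting in the inductive step to be the part most prone to sign or indexing slips, so I would write it out in one aligned display rather than in prose.
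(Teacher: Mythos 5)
Your proposal is correct and follows essentially the same route as the paper: induction on the pendant path length, applying Proposition~\ref{Prop:cutvertexcharpoly} at the far end of the path to get the recurrence $p_j = f_2\,p_{j-1} - q^2x^2\,p_{j-2}$ and then collecting coefficients against the defining recurrence for the $f_j$. Your explicit remark that one must justify applying the proposition at the outer end of the path (with $v_{j-2}$ playing the role of the cut vertex) is a point the paper leaves implicit, but it does not change the argument.
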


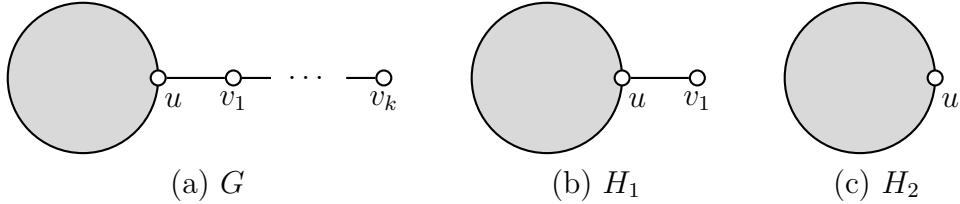
\begin{figure}[!htb]
\centering
\begin{tabular}{c@{\qquad}c@{\qquad}c}
    \begin{tikzpicture}
    \draw[black, thick,fill=black!15!white] (0,0) circle (1);
    \node[vertex] (v) at (1,0) {};
    \node at (1.2,-.3) {$u$};
    \node[vertex] (1) at (2,0) {};
    \node at (2,-.3) {$v_{1}$};
    \node[vertex] (0) at (4,0) {};
    \node at (4,-.3) {$v_k$};
    \draw[black, thick] (v) -- (1) -- (2.5,0) (3.5,0) -- (0);
    \node at (3,0) {\dots};
    \end{tikzpicture}
&
    \begin{tikzpicture}
    \draw[black, thick,fill=black!15!white] (0,0) circle (1);
    \node[vertex] (v) at (1,0) {};
    \node at (1.2,-.3) {$u$};
    \node[vertex] (1) at (2,0) {};
    \node at (2,-.3) {$v_{1}$};
    \draw[black, thick] (v) -- (1);
    \end{tikzpicture}
&
    \begin{tikzpicture}
    \draw[black, thick,fill=black!15!white] (0,0) circle (1);
    \node[vertex] (v) at (1,0) {};
    \node at (1.2,-.3) {$u$};
    \end{tikzpicture}
\\
(a) $G$ & (b) $H_1$ & (c) $H_2$
\end{tabular}
\caption{The graphs for Corollary~\ref{Cor:fkRecursion}.}
\label{fig:fkRecursion}
\end{figure}

\begin{proof}
    We proceed by induction. Let $G_k$ be the graph with a cut vertex from which extends a path of length $k$. The corollary is trivial for $G_0$ and $G_1$; for $G_2$ this is a restatement of Proposition~\ref{Prop:cutvertexcharpoly}. 
    
    Now, assume the corollary holds for all graphs up to $G_k$, and consider $G_{k+1}$. Using our induction hypotheses and Proposition~\ref{Prop:cutvertexcharpoly} we have, \begin{align*}
    P_{\Dq,G_{k+1}} &= f_2P_{\Dq,G_k} -q^2x^2P_{\Dq,G_{k-1}}\\
    &= \big(f_2f_kP_{\Dq,H_1} -q^2x^2f_2f_{k-1}P_{\Dq,H_2}\big) +\big(-q^2x^2f_{k-1}P_{\Dq,H_1} +(q^2x^2)^2f_{k-2}P_{\Dq,H_2}\big)\\
    &=\big(\underbrace{f_2f_k-q^2x^2f_{k-1}}_{=f_{k+1}}\big)P_{\Dq,H_1} -q^2x^2 \big(\underbrace{f_2f_{k-1}-q^2x^2f_{k-2}}_{=f_{k}}\big)P_{\Dq,H_2}.
    \qedhere
    \end{align*}
\end{proof}

We can use Corollary~\ref{Cor:fkRecursion} to find the characteristic polynomial of $\Dq$ for the graph $P_n$, the path on $n$ vertices.  We have the following recurrence with initial conditions
\begin{align*}
P_{\Dq,P_0}(x) &= 1, \\
P_{\Dq,P_1}(x) &= x-1,\text{ and}\\
P_{\Dq,P_n}(x) &= ((q^2+1)x-1+q^2)P_{\Dq,P_{n-1}}(x) -q^2x^2 P_{\Dq,P_{n-2}}(x).
\end{align*}
This is a second-order linear recurrence which can be readily solved by standard tools (or computer algebra system) and we get that $P_{\Dq,P_n}(x)=a (r_1)^n +b (r_2)^n$ where
\begin{align*}
\tau&=(q^2+1)x-1+q^2\\
r_1 &= \frac{\tau + \sqrt{\tau^2 - 4 q^2 x^2}}{2} \\
r_2 &= \frac{\tau - \sqrt{\tau^2 - 4 q^2 x^2}}{2} \\
a &= \frac{-1 + x - q^2 (1 + x) + \sqrt{(q^2-1) (  q^2 (1+x)^2-(x-1)^2)}}{2 \sqrt{\tau^2-4 q^2 x^2}}\\
b &= 2 q^4 (1 + x) + 2 (-1 + x)^2 (1 - x + \sqrt{(q^2-1) (q^2 (1 + x)^2-(x-1)^2)} \\
 & - 2 q^2 \frac{2 - 2 x + x^2 - x^3 + \sqrt{(q^2-1) (q^2 (1 + x)^2 - ( x -1)^2)}}{\sqrt{\tau^2-4 q^2 x^2} (\tau -     \sqrt{( q^2-1) (q^2 (1 + x)^2-( x -1)^2)}\,)^2}.
\end{align*}

Instead of appending a path to a graph, we can also append a star, as given in the following proposition.

\begin{prop} \label{Prop:leaves}
A graph $G$ with cut vertex $u$ that has $k$ leaves adjacent to it has a characteristic polynomial that satisfies \[P_{\Dq,G}(x)=k\big(x-(1-q^2) \big)^{k-1}P_{\Dq, H_1}(x)-(k-1)k\big(x-(1-q^2) \big)^k P_{\Dq, H_2}(x)\] where $H_1,H_2$ are the graphs as shown in Figure~\ref{fig:fkRecursion}.

\begin{figure}[!h]
    \centering
   
    \label{fig:k twins}
    \begin{tikzpicture}
\draw[thick, fill = black!15!white] (0,0) circle (1);
\node[vertex](v) at (0,1) {};
\node[vertex](u0) at (-1,2) {};
\node[vertex](u1) at (-.3,2) {};
\node at (0.4,2) {\dots};
\node[vertex](uk) at (1,2) {};
\draw[thick] (v) -- (u0) (v) -- (u1) (v) -- (uk);
\node at (0.35,1.1) {$u$};
\node at (-1,2.3) {$v_1$};
\node at (-.3,2.3) {$v_2$};
\node at (1,2.3) {$v_k$};
\end{tikzpicture}
\caption{A graph $G$ with $k$ leaves extending from cut vertex $u$.}
\end{figure}
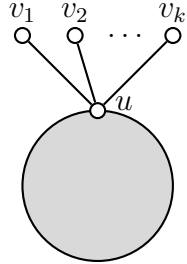

\end{prop}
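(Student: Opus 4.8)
The plan is to compute the characteristic polynomial $\det(xI-\Dq^G)$ directly, using determinant-preserving row and column operations as in the proofs above. First I would order the vertices so that the $k$ leaves $v_1,\dots,v_k$ come first, then the cut vertex $u$, then the remaining vertices $w_1,\dots,w_r$. Because $u$ is a cut vertex, every shortest path from a leaf to a vertex $w$ outside the star passes through $u$, so $\dist_G(v_i,w)=1+\dist_G(u,w)$, while $\dist_G(v_i,v_j)=2$ for $i\neq j$. Hence $xI-\Dq^G$ has a block form in which the leaf--leaf block equals $\bigl(x-(1-q^2)\bigr)I_k-q^2J_k$, the leaf--$u$ block equals $-q\mathbf 1$, every row of the leaf-vs-rest block equals $-q\vec y^{\,T}$ (where $\vec y$ has entries $q^{\dist_G(u,w_j)}$), and --- crucially --- the submatrix supported on $\{u,w_1,\dots,w_r\}$ is exactly $xI-\Dq^{H_2}$ and the submatrix supported on $\{v_1,u,w_1,\dots,w_r\}$ is exactly $xI-\Dq^{H_1}$.

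Next I would exploit the symmetry among the leaves. Subtracting the $v_1$-row from each of the $v_2,\dots,v_k$-rows annihilates every entry of those rows outside the leaf block and turns the leaf-block part of the $v_i$-row into $\bigl(x-(1-q^2)\bigr)(e_i-e_1)$; then adding the columns of $v_2,\dots,v_k$ to the column of $v_1$ leaves each of the $v_2,\dots,v_k$-rows with a single nonzero entry, equal to $x-(1-q^2)$ on the diagonal. A repeated Laplace expansion along those $k-1$ rows pulls out a factor $\bigl(x-(1-q^2)\bigr)^{k-1}$ and reduces the computation to the determinant of a single matrix $N$ indexed by $v_1,u,w_1,\dots,w_r$; this $N$ agrees with $xI-\Dq^{H_1}$ except in the column of $v_1$, where the accumulated column operation has scaled the $u$- and $w_j$-entries by $k$ and shifted the $(v_1,v_1)$-entry by a corresponding multiple of $x-(1-q^2)$.

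Finally I would split $\det N$ using multilinearity of the determinant in the column of $v_1$: write that column as $k$ times the corresponding column of $xI-\Dq^{H_1}$ plus a correction vector supported on the $v_1$-coordinate. The first piece contributes $k\,P_{\Dq,H_1}(x)$, and the second is a scalar times the determinant of the matrix whose $v_1$-column is $e_{v_1}$; cofactor expansion along that column identifies the latter determinant with $P_{\Dq,H_2}(x)$, and the scalar reduces to a multiple of $x-(1-q^2)$. Restoring the prefactor $\bigl(x-(1-q^2)\bigr)^{k-1}$ then gives the stated identity. The one delicate point --- and the step most likely to introduce a slip --- is tracking how the two operations (row subtraction, then column addition) jointly act on the $v_1$-column, since that is exactly what generates the factors of $k$ and pins down the coefficient of the $P_{\Dq,H_2}$ term; the rest is a mechanical block computation. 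As an independent check one can instead diagonalize the leaf block using the spectrum of $(1-q^2)I_k+q^2J_k$: the $(k-1)$-dimensional space of leaf-supported vectors that sum to zero consists of eigenvectors of $\Dq^G$ with eigenvalue $1-q^2$, which accounts directly for the factor $\bigl(x-(1-q^2)\bigr)^{k-1}$ and leaves a lower-dimensional determinant on the orthogonal complement.
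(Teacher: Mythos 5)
Your route is genuinely different from the paper's: the paper derives the second-order recurrence $P_{\Dq,G_{k+1}}=2\big(x-(1-q^2)\big)P_{\Dq,G_k}-\big(x-(1-q^2)\big)^2P_{\Dq,G_{k-1}}$ by eliminating one leaf at a time and then inducts on $k$, whereas you eliminate all $k-1$ "extra" leaves simultaneously using the difference rows $t(e_i-e_1)$ with $t=x-(1-q^2)$, extract the factor $t^{k-1}$ in one stroke, and finish by multilinearity in the remaining column. Your block bookkeeping is correct (the leaf block is $\big(x-(1-q^2)\big)I_k-q^2J_k$, the principal submatrices on $\{v_1,u,w_1,\dots\}$ and $\{u,w_1,\dots\}$ really are $xI-\Dq^{H_1}$ and $xI-\Dq^{H_2}$ since deleting leaves changes no other distances), and your eigenvector sanity check for the factor $t^{k-1}$ is also valid. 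This is arguably cleaner than the paper's induction, since it needs no recurrence solving.

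However, the "delicate point" you flag is exactly where your write-up stops short, and completing it does \emph{not} give the stated identity. After the row and column operations, the $v_1$-column of the reduced $(r+2)\times(r+2)$ matrix has $u$- and $w_j$-entries equal to $k$ times those of $xI-\Dq^{H_1}$, and $(v_1,v_1)$-entry $x-1-(k-1)q^2=k(x-1)-(k-1)t$; so the correction vector is $-(k-1)t\,e_{v_1}$, and multilinearity yields $\det N=k\,P_{\Dq,H_1}(x)-(k-1)t\,P_{\Dq,H_2}(x)$, hence $P_{\Dq,G}(x)=k\,t^{k-1}P_{\Dq,H_1}(x)-(k-1)\,t^{k}P_{\Dq,H_2}(x)$. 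The coefficient of the second term is $(k-1)$, not $(k-1)k$ as in the statement. This discrepancy is real: the printed proposition fails a leading-coefficient check ($k-(k-1)k=k(2-k)\ne 1$ for $k\ge 2$) and is already false for the star $K_{1,2}$ with $H_2=K_1$, while the corrected coefficient $(k-1)$ is consistent with the paper's own base cases ($P_{\Dq,G_0}=P_{\Dq,H_2}$ forces the $k=0$ coefficient to be $-(k-1)=1$) and with solving the paper's recurrence, whose characteristic root $t$ is double so that $P_{\Dq,G_k}=(a+bk)t^k$. So your method is sound and, carried to completion, proves the correct version of the proposition; the gap is only in your final assertion that the scalar works out to match the stated $-(k-1)k$, which it does not.
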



\begin{proof}
We proceed by induction. Let $G_k$ be a graph with $k$ leaves extending from a single vertex. For $G_0$ the characteristic polynomial is $ 0 \cdot P_{\Dq,H_1}(x) + P_{\Dq,H_2}(x)$, and for $G_1$ the characteristic polynomial is $P_{\Dq,H_1}(x) + 0 P_{\Dq,H_2}(x)$. 

The characteristic polynomial of the $\Dq$ matrix for $G_k$ with $v_1, \ldots, v_k$ being the first $k$ rows is 
\begin{align*}
       \det(xI - \Dq) &=
        \det\left(\begin{array}{@{}cccc|ccc@{}}
    x-1 & -q^2 & \cdots & -q^2 &\vec y^T \\
    -q^2 & \ddots & \ddots & \vdots &  \vdots \\
    \vdots & \ddots & \ddots & -q^2 & \vec y^T \\
    -q^2 & \cdots & -q^2 & x-1 & \vec y ^T \\\hline
    \vec y & \cdots& \vec y & \vec y &  M\\
        \end{array}\right)\\
              &=
        \det\left(\begin{array}{@{}ccccc|ccc@{}}
    2(x-1)+2q^2 & -q^2-(x-1) & 0 &\cdots & 0 &  
    \mathbf{0}^T\\
    -q^2-(x-1) & x-1 & -q^2 & \cdots & -q^2 & \vec y^T  \\
    0  & -q^2 & \ddots & \ddots & \vdots &  \vdots \\
    \vdots & \vdots & \ddots & \ddots & -q^2 & \vec y^T  \\
    0 & -q^2  & \cdots & -q^2 & x-1 &  \vec y^T  \\\hline
    \mathbf{0} & \vec y & \cdots & \vec y & \vec y & M  \\
        \end{array}\right).
\end{align*}
%
%
%
%
Going from the first to the second line we use row operations which do not affect the determinant (and characteristic polynomial); namely, we add $(-1)$ times the second row/column to the first row/column. 

Now, assume that the proposition holds for all graphs up to $G_k$. Consider $G_{k+1}$. By co-factor expansion along the first row and the inductive hypothesis we have the following:
\begin{align*}
P_{\Dq,G_{k+1}}(x) =& 2\left( x-(1-q^2) \right)P_{\Dq,G_k} - (x-(1-q^2))^2 P_{\Dq,G_{k-1}}\\
=&2k\big(x-(1-q^2) \big)^{k}P_{\Dq, H_1}(x)- 2(k-1)k\big(x-(1-q^2) \big)^{k+1} P_{\Dq, H_2}(x)\\
&- (k-1)\big(x-(1-q^2) \big)^{k}P_{\Dq, H_1}(x) +(k-2)(k-1)\big(x-(1-q^2) \big)^{k+1} P_{\Dq, H_2}(x)\\
=& (k+1)\big(x-(1-q^2) \big)^{k}P_{\Dq, H_1}(x) -(k)(k+1)\big(x-(1-q^2) \big)^{k+1} P_{\Dq,
H_2}(x).\qedhere
\end{align*}

Note that if we add all possible edges amongst the $k$ leaves in the preceding result (forming a clique glued to a cut vertex), then the characteristic polynomial becomes 
\[
P_{\Dq,G}(x)=k(x-(1-q))^{k-1}P_{\Dq,H_1}(x) - (k-1)k(x-(1-q))^kP_{\Dq,H_2}(x),
\]
by the same argument with the only change being that the $q^2$ terms amongst the leaves in $\Dq$ become $q$.

These are both special cases of twin vertices.


\end{proof}

\begin{figure}[!h]
    \centering
    \begin{tikzpicture}
\draw[black, thick, fill=black!15!white](0,0) circle (1);
\node[vertex] (t1) at (-1,2) {};
\node at (1,2.5) {$t_2$};
\node at (-1,2.5) {$t_1$};
\node[vertex] (t2) at (1,2) {};
\draw[black, thick] (t1) -- (-.5,.5);
\draw[black, thick] (t1) -- (0,.5);
\draw[black, thick] (t1) -- (.5,.5);
\draw[black, thick] (t2) -- (-.5,.5);
\draw[black, thick] (t2) -- (0,.5);
\draw[black, thick] (t2) -- (.5,.5);
\draw[dashed, black, thick] (t1) -- (t2);
\end{tikzpicture}
\caption{Twin vertices.}
\label{fig:twins}
\end{figure}
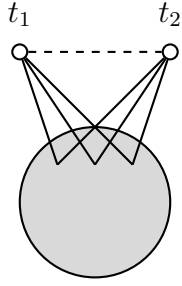

\begin{defn}
    Two vertices, $v_1,v_2$, in a graph $G$ are called \emph{twin vertices} if they are adjacent to the same set of vertices in $V(G \setminus \{v_1,v_2\})$, e.g.\ Figure~\ref{fig:twins}. If $v_1 \not \sim v_2$, we call them \emph{unconnected twins}. If $v_1 \sim v_2$, we call them \emph{connected twins}.
\end{defn}

\begin{prop}
    Let $G$ be a graph with a pair of twin vertices, $v_1,v_2$. Then its exponential distance matrix $\Dq$ has $(1-q^2)$ as an eigenvalue if $v_1 \not \sim v_2$ and $(1-q)$ as an eigenvalue if $v_1 \sim v_2$.
\end{prop}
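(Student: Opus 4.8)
The plan is to exhibit an explicit eigenvector rather than to manipulate the characteristic polynomial. The key structural fact is that twin vertices are interchangeable from the point of view of the rest of the graph: since $v_1$ and $v_2$ are adjacent to the same set of vertices in $V(G)\setminus\{v_1,v_2\}$, for every vertex $w\notin\{v_1,v_2\}$ we have $\dist_G(v_1,w)=\dist_G(v_2,w)$, and hence $(\Dq)_{w,v_1}=(\Dq)_{w,v_2}$. In other words, the columns of $\Dq$ indexed by $v_1$ and $v_2$ agree in every row except possibly the two rows indexed by $v_1$ and $v_2$ themselves. (Vertices $w$ lying in a different component from the twins cause no trouble: both entries are $0$.)

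Next I would pin down the distance between the twins. If $v_1\sim v_2$ then $\dist_G(v_1,v_2)=1$; if $v_1\not\sim v_2$ and they share a neighbour then $\dist_G(v_1,v_2)=2$. Writing $d=\dist_G(v_1,v_2)$, so $d=1$ for connected twins and $d=2$ for unconnected twins, we have $(\Dq)_{v_1,v_1}=(\Dq)_{v_2,v_2}=1$ and $(\Dq)_{v_1,v_2}=(\Dq)_{v_2,v_1}=q^{d}$. Now take the test vector $z$ that is $1$ in the coordinate of $v_1$, $-1$ in the coordinate of $v_2$, and $0$ elsewhere, and compute $\Dq z$ coordinatewise. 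For a coordinate $w\notin\{v_1,v_2\}$ the entry is $(\Dq)_{w,v_1}-(\Dq)_{w,v_2}=0$ by the first step; for the coordinate of $v_1$ it is $1-q^{d}$; and for the coordinate of $v_2$ it is $q^{d}-1=-(1-q^{d})$. Hence $\Dq z=(1-q^{d})z$, so $1-q^{d}$ is an eigenvalue of $\Dq$, equal to $1-q$ when $v_1\sim v_2$ and $1-q^2$ when $v_1\not\sim v_2$, as claimed.

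There is no genuinely hard step here: the whole argument is the observation that twin vertices produce two nearly identical rows/columns in $\Dq$ together with a one-line verification. The only point requiring a word of care is the value of $\dist_G(v_1,v_2)$ — in particular the degenerate case of two isolated vertices (unconnected twins with empty neighbourhood), which must be excluded or read with the convention $q^\infty=0$; this is the only place the argument needs attention.
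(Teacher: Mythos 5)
Your proof is correct and takes essentially the same approach as the paper: both exhibit the explicit eigenvector $(1,-1,0,\dots,0)^T$ supported on the two twins and verify the eigenvalue equation coordinatewise. You handle the connected and unconnected cases uniformly via $d=\dist_G(v_1,v_2)$ (where the paper treats the unconnected case and says the other is ``similar''), and your remark about the degenerate case of two isolated vertices is a legitimate caveat the paper silently ignores.
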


\begin{proof}
    Let $G$ be a graph with disconnected twins $v_1,v_2$. Then if we let the first two rows and columns correspond to $v_1, v_2$, we can see that
    \begin{align*}
        \Dq = 
        \left(
            \begin{array}{c|c}
                 \begin{matrix}
                    1&q^2 \\
                    q^2 & 1
                 \end{matrix}
                 &  
                 \begin{matrix}
                    {\vec y}^T \\
                    {\vec y}^T
                 \end{matrix}
                 \\
                 \hline
                 \begin{matrix}
                    \vec y & \vec y
                 \end{matrix}
                 & M
            \end{array}
        \right).
    \end{align*}
    Now by computation,
    \begin{align*}
        \Dq 
        \begin{pmatrix*}[r]
            1\\
            -1\\
            \mathbf{0}
        \end{pmatrix*}
        = \begin{pmatrix}
            1-q^2\\
            q^2-1\\
            \mathbf{0}\\
        \end{pmatrix}
        = (1-q^2)
        \begin{pmatrix*}[r]
            1\\
            -1\\
            \mathbf{0}
        \end{pmatrix*}.
    \end{align*}
    Thus, $\vec x = \langle  1, -1, 0, \ldots,0 \rangle ^T$ is an eigenvector for $\Dq$ with eigenvalue $(1-q^2)$. We use a similar argument to show that $(1-q)$ is an eigenvalue if $v_1, v_2$ are connected twins. 
\end{proof}

\subsection{Connections to other matrices}

\begin{prop} \label{regular}
    If $G$ is a $k$-regular graph on $n$ vertices with $\diam (G) \leq 2$ and has adjacency matrix $A$ with spectrum $\{\mu_1=k, \mu_2,\dotsc, \mu_n\}$, then the exponential distance matrix has eigenvalues
    \begin{align*}
        \lambda_i = 
        \begin{cases}
            (q^2)n - (q^2-q)k -(q^2-1) & \text{for }i = 1,\\
            -(q^2-q)\mu_i -q^2 +1 \ & \text{for }2 \leq i \leq n.\\
        \end{cases}
    \end{align*}
\end{prop}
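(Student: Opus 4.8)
The plan is to exploit the fact that the diameter bound severely restricts the entries of $\Dq$. First I would observe that, since $G$ is $k$-regular (with $k\ge 1$) it is connected, and since $\diam(G)\le 2$ every pair of distinct vertices is at distance exactly $1$ or $2$. Hence the off-diagonal entries of $\Dq$ equal $q$ on edges of $G$ and $q^2$ on non-edges, which lets us write
\[
\Dq = q^2 J + (q - q^2)A + (1 - q^2)I,
\]
where $J$ is the all-ones matrix and $I$ the identity.

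Next I would use $k$-regularity to diagonalize all three summands simultaneously. Because $G$ is $k$-regular we have $AJ = JA = kJ$, so $A$ and $J$ commute and share an orthogonal eigenbasis: the all-ones vector $\mathbf{1}$ is an eigenvector of $A$ with eigenvalue $\mu_1 = k$ and of $J$ with eigenvalue $n$, while every eigenvector of $A$ orthogonal to $\mathbf{1}$ (with eigenvalue $\mu_i$ for $2\le i\le n$) lies in the kernel of $J$. The identity acts as a scalar on all of these.

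Then I would simply read off the eigenvalues of $\Dq$ from the displayed expression. On the $\mathbf{1}$-eigenspace this gives $\lambda_1 = q^2 n + (q - q^2)k + (1 - q^2)$, which rearranges to $(q^2)n - (q^2-q)k - (q^2-1)$; on the eigenvector associated with $\mu_i$ for $i\ge 2$ it gives $\lambda_i = q^2\cdot 0 + (q - q^2)\mu_i + (1 - q^2) = -(q^2-q)\mu_i - q^2 + 1$, exactly as claimed.

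There is no serious obstacle here. The only points that merit a word of care are justifying that two distinct non-adjacent vertices are at distance exactly $2$ rather than $\infty$ (i.e.\ connectivity, immediate from regularity) and checking the degenerate case $G = K_n$ of diameter $1$, where $J - I - A = O$ and the $q^2 J$ term contributes only on the diagonal; a quick substitution confirms the formula still holds there. The heart of the argument is just the observation that the diameter bound makes $\Dq$ an affine combination of $I$, $A$, and $J$, after which $k$-regularity does the rest.
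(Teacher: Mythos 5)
Your proof is correct and follows essentially the same route as the paper: both use the diameter bound to write $\Dq = q^2 J - (q^2-q)A - (q^2-1)I$ and then read off the eigenvalues from the common eigenbasis of $A$ and $J$ furnished by $k$-regularity. One small quibble: connectivity is not ``immediate from regularity'' (two disjoint triangles are $2$-regular); it is the hypothesis $\diam(G)\le 2$ itself that guarantees every non-adjacent pair is at distance exactly $2$.
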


\begin{proof}
    Consider the $(i,j)$-th entry of $\Dq$
\[(\Dq)_{i,j} = 
\begin{cases} 
      1 &  \text{if }i=j, \\
      q & \text{if } v_i \sim v_j, \\
      q^2 &  \text{ otherwise}.
   \end{cases}
\]
Thus, we can express $\Dq$ in terms of $A$, $J$ (the all $1$s matrix), and $I$ as follows:
\begin{align*}
    \Dq = (q^2) J - (q^2-q) A - (q^2-1)I.
\end{align*}
Since $G$ is regular, $A$ has constant row sums and $(k,\mathbf{1})$ is an eigenpair. Thus,
\begin{align*}
    \Dq \mathbf{1} &= \big[(q^2) J - (q^2-q) A - (q^2-1)I\big] \mathbf{1} \\
    &=\big[q^2 n - (q^2-q)k - (q^2-1)\big]\mathbf{1}.
\end{align*}
Thus $(q^2 n - (q^2-q)k - (q^2-1),\mathbf{1})$ is an eigenpair of $\Dq$. Since $A$ is a Hermitian matrix, its remaining eigenvectors are orthogonal to $\mathbf{1}$. Thus, if $(\mu_i,\vec x)$ is one of the remaining eigenpairs of $A$,
\begin{align*}
    \Dq \vec x &= \big[(q^2) J - (q^2-q) A - (q^2-1)I\big] \vec x \\
    &= \big[-(q^2-q) \mu_i  - (q^2-1)\big] \vec x,\\
\end{align*}
gives the remaining eigenpairs of $\Dq$.
\end{proof}

\begin{defn}
    The \emph{join} of two graphs, $G,H$, denoted $G \vee H$, has $V(G \vee H)$ as the disjoint union of $V(G),V(H)$ and $E(G \vee H) = E(G) \cup E(H) \cup \{\{u,v\}\mid u \in V(G), v \in V(H)\}$. 
\end{defn}

\begin{prop}\label{join}
    Let $G$ be a graph on $n$ vertices that is $k$-regular whose adjacency matrix $A^G$ has spectrum $\{\lambda_1 = k,\dotsc,\lambda_n\}$. Let $H$ be a graph on $m$ vertices that is $\ell$-regular whose adjacency matrix $A^H$ has spectrum $\{\mu_1 = \ell,\dotsc,\mu_m\}$. Then
    \[\spec(\Dq^{G \vee H})=\{\phi_1, \phi_2\} \cup \{-(q^2-q)\lambda_i - q^2+1 \mid 2 \leq i \leq n\} \cup \{ -(q^2-q)\mu_j - q^2+1 \mid 2\leq j \leq m\},\] with 
    \begin{align*}
        \phi_1, \phi_2 = \pm\frac{q \sqrt{k^2 q^2 - 2 k^2 q + k^2 - 2 k \ell q^2 + 2 k \ell + \ell^2 q^2 + 2 \ell^2 q + \ell^2 + 4 m n}}{2}\\
        - \frac{-k q^2 + k q - \ell q^2 - \ell q + 2 n q^2 - 2 q^2 + 2}{2}.
    \end{align*}
\end{prop}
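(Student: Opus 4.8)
The plan is to follow the template of Proposition~\ref{regular}, but in block form. Since in $G\vee H$ every vertex of $G$ is adjacent to every vertex of $H$, two vertices lying in the same part are at distance $1$ or $2$ according to whether they are adjacent, while two vertices in different parts are at distance $1$; in particular $\diam(G\vee H)\le 2$ (assuming both parts nonempty). Listing the $n$ vertices of $G$ first, this gives the block form
\[
\Dq^{G\vee H}=\begin{pmatrix} q^2 J_n+(1-q^2)I_n-(q^2-q)A^G & q J_{n\times m}\\[2pt] q J_{m\times n} & q^2 J_m+(1-q^2)I_m-(q^2-q)A^H\end{pmatrix},
\]
where $J$ denotes an all-ones matrix of the indicated size. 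Everything below is extracted from this single identity; morally it is the equitable-partition / quotient-matrix technique combined with the lifting trick from Proposition~\ref{regular}.

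Next I would build eigenvectors from the non-Perron eigenvectors of $A^G$ and $A^H$. Because $A^G$ is symmetric and $k$-regular it has an orthogonal eigenbasis $\mathbf 1_n,\vec{x}_2,\dots,\vec{x}_n$ with $A^G\vec{x}_i=\lambda_i\vec{x}_i$ and $\mathbf 1_n^{T}\vec{x}_i=0$ for $i\ge 2$, and similarly $A^H$ has $\mathbf 1_m,\vec{y}_2,\dots,\vec{y}_m$. For $i\ge 2$ the vector $(\vec{x}_i,\mathbf 0)^{T}\in\mathbb R^{n+m}$ is annihilated by the $J_n$ term and by the off-diagonal block $qJ_{m\times n}$ (since $\mathbf 1_n^{T}\vec{x}_i=0$), so it is an eigenvector of $\Dq^{G\vee H}$ with eigenvalue $-(q^2-q)\lambda_i-q^2+1$; symmetrically $(\mathbf 0,\vec{y}_j)^{T}$ is an eigenvector with eigenvalue $-(q^2-q)\mu_j-q^2+1$ for $j\ge 2$. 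This produces $n+m-2$ of the eigenvalues.

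The remaining two come from the subspace $U=\operatorname{span}\{(\mathbf 1_n,\mathbf 0)^{T},(\mathbf 0,\mathbf 1_m)^{T}\}$, which is $\Dq^{G\vee H}$-invariant: using $A^G\mathbf 1_n=k\mathbf 1_n$, $A^H\mathbf 1_m=\ell\mathbf 1_m$, $J_n\mathbf 1_n=n\mathbf 1_n$, $J_{n\times m}\mathbf 1_m=m\mathbf 1_n$, and so on, the restriction of $\Dq^{G\vee H}$ to $U$ is represented in this ordered basis by
\[
B=\begin{pmatrix} q^2 n+1-q^2-(q^2-q)k & q m\\[2pt] q n & q^2 m+1-q^2-(q^2-q)\ell\end{pmatrix}.
\]
Then $\phi_1,\phi_2$ are the two roots of the characteristic polynomial of $B$, namely $\tfrac12\big(\operatorname{tr}B\pm\sqrt{(\operatorname{tr}B)^2-4\det B}\,\big)$, and expanding and simplifying yields the displayed closed forms (the radicand collapses to $q^2$ times a perfect square plus $4mn$). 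Finally, the $n+m$ vectors $\{(\vec{x}_i,\mathbf 0)^{T}\}_{i\ge 2}\cup\{(\mathbf 0,\vec{y}_j)^{T}\}_{j\ge 2}\cup\{(\mathbf 1_n,\mathbf 0)^{T},(\mathbf 0,\mathbf 1_m)^{T}\}$ form a basis of $\mathbb R^{n+m}$, so the values listed are exactly the spectrum (with multiplicities).

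The block computations and the dimension count are routine; the only steps that demand real care are verifying the invariance of $U$, reading off the quotient matrix $B$ with the correct entries, and then grinding out the algebra that turns $\tfrac12\big(\operatorname{tr}B\pm\sqrt{(\operatorname{tr}B)^2-4\det B}\,\big)$ into the stated expressions for $\phi_1$ and $\phi_2$ — in particular checking that the discriminant simplifies to the advertised form.
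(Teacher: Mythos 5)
Your proposal is correct and follows essentially the same route as the paper: the same block decomposition of $\Dq^{G\vee H}$ via the diameter-$\le 2$ observation, the same lifting of the eigenvectors orthogonal to $\mathbf 1$ to get the $n+m-2$ eigenvalues $-(q^2-q)\lambda_i-q^2+1$ and $-(q^2-q)\mu_j-q^2+1$, and the same $2\times2$ equitable-partition quotient matrix (your $B$ matches the paper's matrix entry for entry) whose characteristic polynomial yields $\phi_1,\phi_2$. Your explicit closing dimension count is a small tidy addition the paper leaves implicit.
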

\begin{proof}
    Notice that $\diam(G\vee H) \leq 2$ because the edges added by the join operation guarantee a path of length less than or equal to 2 between any vertices. Thus,
    \[
         \Dq^{G \vee H} = 
         \left(\begin{array}{@{}c|c@{}}
            q^2 J + (q-q^2) A^G - (q^2-1)I & qJ\\
            \hline
            q J &
             q^2 J + (q-q^2) A^H - (q^2-1)I
        \end{array}\right),
     \]
     where $A^G, A^H$ are the adjacency matrices for $G,H$ respectively. Since $G,H$ are regular, $\mathbf{1}$ is an eigenvector for $A^G$ and $A^H$ and all the remaining eigenvectors for the two matrices are orthogonal to $\mathbf{1}$. 
     
     Since each block has a constant row sum, we can find eigenvectors of the form $\big({\alpha \mathbf1\atop\beta\mathbf1}\big)$, i.e.\ by doing an equitable partition.  This reduces to finding the eigenvectors and eigenvalues for the following $2 \times 2$ matrix. 
    \begin{align*}
        \begin{pmatrix}
            1+q k+ q^2 (n-k -1) & m q \\
            n q & 1+q \ell + q^2 (m-\ell -1) \\
        \end{pmatrix}
    \end{align*}
    This gives the eigenvalues $\phi_1, \phi_2$.
    
    For the remaining eigenvalues, consider the eigenpairs for $A^G$, $(\lambda_i, \vec x_i)$, where $\vec x_i$ is orthogonal to $\mathbf{1}$. 
    Similarly, consider the eigenpairs, $(\mu_j,\vec y_j)$, of $A^H$ where $\vec y_j$ is orthogonal to $\mathbf{1}$. 
     Construct the following vectors
     \begin{align*}
        \begin{pmatrix}
            \vec x_i \\
            \hline 
            \mathbf{0}\\
        \end{pmatrix} \text{, for } 2 \leq i \leq n \text{ and }
         \begin{pmatrix}
            \mathbf{0}\\
            \hline 
            \vec y_j \\
        \end{pmatrix} \text{, for } 2 \leq j \leq m.
    \end{align*}
    Notice that
    \begin{align*}
            \Dq^{G \vee H}
            \begin{pmatrix}
                \vec x_i \\
                \hline 
                \mathbf{0}\\
            \end{pmatrix} 
            &= 
             \left(\begin{array}{@{}c|c@{}}
                q^2 J + (q-q^2) A^G - (q^2-1)I & qJ\\
                \hline
                q J &
                 q^2 J + (q-q^2) A^H - (q^2-1)I
            \end{array}\right)
            \begin{pmatrix}
                \vec x_i \\
                \hline 
                \mathbf{0}\\
            \end{pmatrix}\\
            &= 
            \begin{pmatrix}
                (q^2 J + (q-q^2) A^G - (q^2-1)I)\vec x_i \\
                \hline 
                qJ \vec x_i
            \end{pmatrix}\\
            &= \big((q-q^2) \lambda_i- q^2 + 1\big)
             \begin{pmatrix}
                \vec x_i \\
                \hline 
                \mathbf{0}\\
            \end{pmatrix}.
        \end{align*}
    Thus, for $2 \leq i \leq n$, we have $(q-q^2) \lambda_i- q^2 + 1$ is  an eigenvalue for $\Dq^{G \vee H}$. A similar computation shows $(q-q^2) \mu_j - q^2 + 1$ is also an eigenvalue for $2 \leq j \leq m$.
\end{proof}
   
The following allows us to determine the spectrum of the distance matrix for graphs with diameter at most two from the spectrum of the exponential distance matrix.

\begin{prop}\label{prop:distpoly}
    Let $G$ be graph on $n$ vertices such that $\diam (G) \leq 2$. Let $\mathcal{D}_2^G$ be the exponential distance matrix of $G$ with $q=2$. Then
    \[
        P_{\mathcal{D},G}(x) = \frac{1}{2^n} P_{\mathcal{D}_2,G}(2x+1).
    \]
\end{prop}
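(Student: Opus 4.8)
The plan is to reduce everything to a single linear identity between the two matrices and then push it through the determinant. The key observation is that when $\diam(G)\le 2$, the ordinary distance matrix has only the entries $0$ (on the diagonal), $1$ (adjacent pairs), and $2$ (distinct non-adjacent pairs), so it can be written in closed form in terms of the all-ones matrix $J$, the identity $I$, and the adjacency matrix $A$: namely $\mathcal{D}^G = 2J - 2I - A$. (That $\diam(G)\le 2$ forces $G$ to be connected, so there are no infinite entries to worry about; the single-vertex case is trivial.)

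Next I would recall the closed form already derived in the proof of Proposition~\ref{regular}: for a graph of diameter at most two, $\Dq = q^2J - (q^2-q)A - (q^2-1)I$. Setting $q=2$ gives $\mathcal{D}_2^G = 4J - 2A - 3I$. Comparing this with the expression $\mathcal{D}^G = 2J - 2I - A$, one checks directly that
\[
\mathcal{D}_2^G = 2\mathcal{D}^G + I,
\qquad\text{equivalently}\qquad
\mathcal{D}^G = \tfrac12\big(\mathcal{D}_2^G - I\big).
\]

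With this identity in hand the conclusion is a one-line determinant manipulation. Writing $P_{\mathcal{D},G}(x) = \det(xI - \mathcal{D}^G)$ and substituting $\mathcal{D}^G = \tfrac12(\mathcal{D}_2^G - I)$ gives
\[
P_{\mathcal{D},G}(x) = \det\!\Big(xI - \tfrac12\mathcal{D}_2^G + \tfrac12 I\Big)
= \det\!\Big(\tfrac12\big((2x+1)I - \mathcal{D}_2^G\big)\Big)
= \frac{1}{2^n}\det\!\big((2x+1)I - \mathcal{D}_2^G\big)
= \frac{1}{2^n} P_{\mathcal{D}_2,G}(2x+1),
\]
using multilinearity of the determinant in the $n$ rows to pull out the factor $(1/2)^n$. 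There is no real obstacle here; the only thing requiring care is getting the affine relation $\mathcal{D}_2^G = 2\mathcal{D}^G + I$ exactly right (and noting it relies on $\diam(G)\le2$ so that every off-diagonal distance is $1$ or $2$), after which the substitution $2x+1$ and the scalar $2^n$ fall out automatically. One could also phrase the final step as: the eigenvalues of $\mathcal{D}^G$ are exactly $(\theta-1)/2$ as $\theta$ ranges over the eigenvalues of $\mathcal{D}_2^G$, with matching multiplicities, which immediately yields the stated relation between characteristic polynomials.
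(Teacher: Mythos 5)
Your proof is correct and follows essentially the same route as the paper's: both hinge on the affine identity $\mathcal{D} = \tfrac12(\mathcal{D}_2 - I)$, valid because $\diam(G)\le 2$ restricts the off-diagonal entries to two values, followed by the same determinant manipulation pulling out the factor $1/2^n$. The only cosmetic difference is that you derive the identity via the closed forms in $J$, $A$, $I$ while the paper compares the entries of the two matrices directly.
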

\begin{proof}
    Consider the $(i,j)$-th entry of $\mathcal{D}_2$. Since $G$ has diameter less than or equal to 2,
    \[(\mathcal{D}_2)_{i,j} = 
        \begin{cases} 
        1 & \text{if }i = j,\\
        2 & \text{if }v_i \sim v_j ,\\
        4 & \text{otherwise}.\\
        \end{cases}
    \]
    Now consider the $(i,j)$-th entry of the standard distance matrix, $\mathcal{D}$,
    \[(\mathcal{D})_{i,j} = 
        \begin{cases} 
        0 & \text{if }i = j, \\
        1 & \text{if }v_i \sim v_j, \\
        2 & \text{otherwise.}
        \end{cases}
    \]
    Thus, $ \mathcal{D} = \frac{1}{2}(\mathcal{D}_2 - I)$, so
    \begin{align*}
        P_{\mathcal{D},G}(x)  &= \det(xI - \mathcal{D})\\
        &= \det(xI - (\frac{1}{2}(\mathcal{D}_2 - I)))\\
        &= \det(\frac{1}{2}((2x+1)I -\mathcal{D}_2)) \\
        &= \frac{1}{2^n} \det((2x+1)I -\mathcal{D}_2)\\
        &= \frac{1}{2^n} P_{\mathcal{D}_2,G}(2x+1).\qedhere
    \end{align*}
\end{proof}

\begin{cor}
    If $G,H$ are graphs that are $\Dq$-cospectral and both have diameter less than or equal to 2, then $G,H$ are also $\mathcal{D}$-cospectral.
\end{cor}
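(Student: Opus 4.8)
The plan is to combine Proposition~\ref{prop:distpoly} with the defining property of $\Dq$-cospectrality. First I would note that being $\Dq$-cospectral means $P_{\Dq,G}(x) = P_{\Dq,H}(x)$ as polynomials (in the variable $x$, with coefficients that are polynomials in $q$); in particular the two characteristic polynomials have the same degree, so $G$ and $H$ have the same number of vertices, say $n$. Specializing the identity $P_{\Dq,G} = P_{\Dq,H}$ at $q = 2$ then yields $P_{\mathcal{D}_2,G}(x) = P_{\mathcal{D}_2,H}(x)$.

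Next I would invoke Proposition~\ref{prop:distpoly}, which applies to each of $G$ and $H$ since both have diameter at most $2$. It gives
\[
P_{\mathcal{D},G}(x) = \frac{1}{2^n} P_{\mathcal{D}_2,G}(2x+1)
\quad\text{and}\quad
P_{\mathcal{D},H}(x) = \frac{1}{2^n} P_{\mathcal{D}_2,H}(2x+1),
\]
using that both graphs have the same $n$. Since $P_{\mathcal{D}_2,G} = P_{\mathcal{D}_2,H}$, the right-hand sides agree, hence $P_{\mathcal{D},G}(x) = P_{\mathcal{D},H}(x)$; that is, $G$ and $H$ are $\mathcal{D}$-cospectral.

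There is essentially no obstacle here: the only points needing a word of care are that $\Dq$-cospectrality forces equal vertex counts (so that the factor $1/2^n$ is the same for both graphs), and that one is allowed to substitute $q=2$ into the polynomial identity — both are immediate. The real content has already been done in Proposition~\ref{prop:distpoly}; this corollary is just the observation that an invertible affine change of variable $x \mapsto 2x+1$ together with a nonzero scalar cannot separate two polynomials that were equal before the substitution.
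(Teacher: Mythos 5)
Your proposal is correct and matches the paper's intended argument: the corollary is stated immediately after Proposition~\ref{prop:distpoly} with no separate proof precisely because it follows by specializing the polynomial identity at $q=2$ and applying that proposition to both graphs. The two points you flag (equal vertex counts and the legitimacy of substituting $q=2$) are exactly the right things to check, and both hold.
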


Now, we prove the exponential distance matrix's characteristic polynomial stores the adjacency matrix's characteristic polynomial. 

\begin{thm}\label{adjacency}
    If $a_k$ is the coefficient of $q^k x^{n-k}$ in $P_{\Dq,G}(x+1)$, then $P_{A,G}(x) = a_n x^n + a_{n-1} x^{n-1} + \cdots + a_1 x + a_0$.
\end{thm}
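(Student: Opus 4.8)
The plan is to rewrite the substituted polynomial as a determinant and then isolate its top-degree part under a grading in which both $x$ and $q$ carry weight one. Since every diagonal entry of $\Dq$ equals $q^{0}=1$, we have
\[
P_{\Dq,G}(x+1)=\det\!\big((x+1)I-\Dq\big)=\det\!\big(xI-(\Dq-I)\big),
\]
and $\Dq-I$ is the matrix with zero diagonal and $(i,j)$-entry $q^{\dist_G(i,j)}$ off the diagonal (with the usual convention that this is $0$ for vertices in different components). Grouping entries by distance, $\Dq-I = qA + q^{2}B_2+q^{3}B_3+\cdots$, where $B_d$ is the $0$/$1$ matrix whose $(i,j)$-entry is $1$ exactly when $\dist_G(i,j)=d$; in particular the part that is degree one in $q$ is $qA$.

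First I would expand $\det\!\big(xI-(\Dq-I)\big)$ via the Leibniz formula and track weights. A permutation $\sigma$ with exactly $f$ fixed points contributes a term of the form $\pm\, x^{f}\prod_{i:\sigma(i)\neq i}\big(-q^{\dist_G(i,\sigma(i))}\big)$, which vanishes unless every moved pair lies in a common component. Such a term has $x$-degree $f$ and $q$-degree $\sum_{i:\sigma(i)\neq i}\dist_G(i,\sigma(i))\geq n-f$, so its total weight is at least $n$, and equals $n$ precisely when each non-fixed point is mapped to a neighbour. Consequently the weight-$n$ homogeneous component of $P_{\Dq,G}(x+1)$ is exactly the sum of the contributions of those permutations all of whose moved pairs are edges of $G$ — which is precisely $\det(xI-qA)$, the corresponding Leibniz sum for the matrix $xI-qA$.

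Then I would finish by reading off coefficients. Every monomial $q^{k}x^{n-k}$ is homogeneous of weight $n$, so its coefficient in $P_{\Dq,G}(x+1)$ equals its coefficient in $\det(xI-qA)=q^{\,n}P_{A,G}(x/q)$. Writing $P_{A,G}(x)=\sum_{k=0}^{n}c_{k}x^{n-k}$ gives $\det(xI-qA)=\sum_{k=0}^{n}c_{k}\,q^{k}x^{n-k}$, whence $a_{k}=c_{k}$ for all $k$; that is, $a_k$ is the coefficient of $x^{n-k}$ in $P_{A,G}(x)$, which recovers every coefficient of $P_{A,G}(x)$ and gives the stated identity.

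The hard part will really just be the weight bookkeeping in the middle step: one must argue carefully that the non-fixed points of $\sigma$ are exactly the positions contributing powers of $q$, that each contributes at least one power of $q$ because $\dist_G(i,\sigma(i))\geq 1$ whenever $\sigma(i)\neq i$, and that attaining total weight exactly $n$ forces every moved pair to be an edge; one also checks that entries vanishing for vertices in different components cause no trouble, since the corresponding permutations simply contribute $0$ to both determinants. Everything else is routine determinant manipulation.
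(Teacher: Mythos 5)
Your proof is correct and takes essentially the same route as the paper: both expand $\det\big((x+1)I-\Dq\big)$ by the Leibniz formula and observe that the permutations contributing to $q^kx^{n-k}$ are exactly those with $n-k$ fixed points all of whose moved pairs are edges, which matches the corresponding terms of $\det(xI-A)$. Your weight-$n$ grading and the identification of that homogeneous component with $\det(xI-qA)=q^nP_{A,G}(x/q)$ is just a tidier packaging of the paper's term-by-term comparison, and like the paper you correctly land on ``$a_k$ is the coefficient of $x^{n-k}$ in $P_{A,G}(x)$,'' which is the intended reading of the statement.
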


\begin{proof}
    Notice
    \begin{align*}
        P_{\Dq,G}(x+1) &= \det((x+1)I - \Dq)\\
        &= \sum_{\sigma \in {S_n}}\Big(\sign(\sigma)\prod_{i=1}^n ((x+1)I-\Dq)_{i,\sigma(i)}\Big),
    \end{align*}
    where the $(i,j)$-th entry of $((x+1)I - \Dq)$ is $x$ if $i=j$, and $-q^{\dist(i,j)}$ otherwise. Thus, in order to get a term with $x^{n-k}$, there must be $(n-k)$ fixed points in our permutation $\sigma$. To get a $q^k x^{n-k}$ term, the remaining $k$ non-fixed points must each be $-q$ in our product. This corresponds to a selection of entries $(i,j)$ such that $v_i \sim v_j$. Now notice
    \begin{align*}
       P_{A,G}(x) &= \det(xI - A),
    \end{align*}
    where the $(i,j)$-th entry of $(xI - A)$ is $x$ if $i=j$, and $-A_{i,j}$ otherwise. To get a term with $x^{n-k}$, there must be $(n-k)$ fixed points and the remaining $k$ terms must all be $-1$. This also corresponds to a selection of entries $(i,j)$ such that $v_i \sim v_j$. Thus, the contribution from the permutations (including the sign term) are consistent for the two matrices. So the coefficient of $q^k x^{n-k}$ in $P_{\Dq,G}(x+1)$ is the coefficient of $x^{n-k}$ in $P_{A,G}(x)$.
\end{proof}

This theorem shows that the characteristic polynomial of the adjacency is encoded in the exponential distance matrix. Thus, any information that is preserved by the spectrum of the adjacency matrix is also preserved by the $\Dq$-spectrum. 

The complement of a graph $G$, denoted $\overline{G}$, has the same vertex set as $G$ so that $u \sim v$ in $G$ if and only if $u\not\sim v$ in $\overline{G}$. 

\begin{prop}\label{acomp}
    Let $\diam (G) \leq 2$. If $a_k$ is the coefficient of $q^{2k} x^{n-k}$ in $P_{\Dq,G}(x+1)$, then $P_{A,\overline{G}}(x) = a_n x^n + a_{n-1} x^{n-1} + \cdots + a_1 x + a_0$.
\end{prop}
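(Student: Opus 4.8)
The plan is to run the same Leibniz (permutation) expansion used in the proof of Theorem~\ref{adjacency}, but now tracking the coefficient of $q^{2k}x^{n-k}$ rather than $q^{k}x^{n-k}$. The hypothesis $\diam(G)\le 2$ is exactly what makes this possible: it forces $G$ to be connected with all distances between distinct vertices in $\{1,2\}$, so every off-diagonal entry of $\Dq$ equals $q$ when $v_i\sim v_j$ in $G$ and equals $q^{2}$ when $v_i\not\sim v_j$ in $G$, and the latter pairs are precisely the edges of $\overline{G}$.

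First I would write $P_{\Dq,G}(x+1)=\det\big((x+1)I-\Dq\big)=\sum_{\sigma\in S_n}\sign(\sigma)\prod_{i=1}^n\big((x+1)I-\Dq\big)_{i,\sigma(i)}$, noting that the $(i,i)$-entry is $x$ and the $(i,j)$-entry for $i\ne j$ is $-q^{\dist(i,j)}\in\{-q,-q^2\}$. Fix a permutation $\sigma$ whose fixed-point set $F$ has size $n-k$; among the $k$ moved points let $a$ be the number with $v_i\sim v_{\sigma(i)}$ in $G$ and $b=k-a$ the number with $v_i\not\sim v_{\sigma(i)}$. The term contributed by $\sigma$ is $\sign(\sigma)\,x^{n-k}(-1)^{k}q^{a+2b}$, so its $x$-degree is exactly $n-k$ and its $q$-degree is $a+2b$, a number between $k$ and $2k$. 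Consequently, to pick out the coefficient of $q^{2k}x^{n-k}$ we need $a+2b=2k$ together with $a+b=k$; this forces $a=0$, $b=k$, i.e.\ \emph{every} moved point of $\sigma$ is sent to a non-neighbor in $G$, equivalently a neighbor in $\overline{G}$. Thus $a_k=(-1)^k\sum_{\sigma}\sign(\sigma)$, the sum being over all $\sigma$ with exactly $n-k$ fixed points, all of whose moved points map to $\overline{G}$-neighbors.

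Next I would expand $P_{A,\overline{G}}(x)=\det\big(xI-A^{\overline{G}}\big)=\sum_{\sigma}\sign(\sigma)\prod_{i=1}^n\big(xI-A^{\overline{G}}\big)_{i,\sigma(i)}$, whose $(i,i)$-entry is $x$ and whose $(i,j)$-entry for $i\ne j$ is $-A^{\overline{G}}_{i,\sigma(i)}$, equal to $-1$ when $v_i\sim v_j$ in $\overline{G}$ and $0$ otherwise. A permutation with $n-k$ fixed points contributes $0$ unless all of its moved points are mapped to $\overline{G}$-neighbors, in which case it contributes $\sign(\sigma)\,x^{n-k}(-1)^k$. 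Hence the coefficient of $x^{n-k}$ in $P_{A,\overline{G}}(x)$ is the very same sum $(-1)^k\sum_{\sigma}\sign(\sigma)$ obtained above, so it equals $a_k$ for every $0\le k\le n$, which is exactly the statement of the proposition (and mirrors Theorem~\ref{adjacency}).

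The argument has no genuine obstacle beyond careful bookkeeping; the one point that must be stated cleanly is that, for a fixed $x$-degree $n-k$, the exponent $2k$ is the \emph{largest} possible $q$-degree and is realized only by the permutations whose moved points all go to $\overline{G}$-edges — this is precisely where $\diam(G)\le 2$ (so that each distance is at most $2$) is used. With that in hand the two Leibniz expansions match term by term, exactly as in the proof of Theorem~\ref{adjacency}.
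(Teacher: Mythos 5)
Your proof is correct and follows essentially the same route as the paper, which simply cites the argument of Theorem~\ref{adjacency} together with the observation that under $\diam(G)\le 2$ the off-diagonal entries equal to $q^2$ are exactly those indexed by edges of $\overline{G}$. Your degree count $a+2b=2k$ with $a+b=k$ forcing $a=0$ makes explicit the step the paper leaves implicit, and it matches the paper's intended bookkeeping (including the same $x^{n-k}\leftrightarrow a_k$ indexing convention used in the proof of Theorem~\ref{adjacency}).
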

\begin{proof}
    The statement can be proven in a manner very similar to the proof of Theorem~\ref{adjacency}. The key observation is that for $i\ne j$, $(\Dq^G)_{i,j} = q^2$ if and only if $(\Dq^{\overline{G}})_{i,j} = q$. This is because $\{v_i,v_j\} \not \in E(G)$ if and only if $\{v_i,v_j\}\in E(\overline{G)}$, by the definition of the complement.
\end{proof}

\subsection{Spectra of certain graphs}
Using the tools from the previous sections, we can now explicitly determine the spectra of certain families of graphs. 

\begin{prop}\label{kspecprop}
    The exponential matrix, $\Dq$, of a complete graph on $n$ vertices has eigenvalues $q(n-1)+1$ with multiplicity one and $(1-q)$ with multiplicity $n-1$.
\end{prop}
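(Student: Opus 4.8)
The plan is to write down $\Dq^{K_n}$ explicitly and recognize it as a rank-one perturbation of the identity. In $K_n$ every two distinct vertices are adjacent, so $\dist(v_i,v_j)=1$ whenever $i\ne j$ and $\dist(v_i,v_i)=0$; hence every off-diagonal entry of $\Dq^{K_n}$ equals $q$ and every diagonal entry equals $1$. Writing $J$ for the all-ones matrix and $I$ for the identity, this says
\[
\Dq^{K_n} = (1-q)I + qJ.
\]

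Next I would invoke the well-known spectrum of $J$: it has eigenvalue $n$ (with eigenvector $\mathbf{1}$) of multiplicity one and eigenvalue $0$ of multiplicity $n-1$. Since $I$ acts as a scalar on every vector, $\Dq^{K_n}$ has the same eigenvectors as $J$, with eigenvalues shifted by $(1-q)$ and scaled by $q$: the eigenvector $\mathbf{1}$ gives $(1-q)+qn = q(n-1)+1$, and any vector orthogonal to $\mathbf{1}$ gives $(1-q)+q\cdot 0 = 1-q$. This yields $q(n-1)+1$ with multiplicity one and $1-q$ with multiplicity $n-1$, as claimed.

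Alternatively, the statement is an immediate special case of Proposition~\ref{regular}: $K_n$ is $(n-1)$-regular with $\diam(K_n)\le 2$, and its adjacency matrix has spectrum $\{n-1,-1,\dots,-1\}$ with $-1$ of multiplicity $n-1$. Substituting $k=n-1$ into the $i=1$ formula gives $q^2 n-(q^2-q)(n-1)-(q^2-1)=q(n-1)+1$ after simplification, and substituting $\mu_i=-1$ into the second formula gives $-(q^2-q)(-1)-q^2+1=1-q$.

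There is no real obstacle here; the only things to watch are the arithmetic simplifications and, for full generality, the degenerate cases $n=1$ (where $\Dq^{K_1}=(1)$, consistent with $q(n-1)+1=1$ and with no copies of $1-q$) and $n=2$ (where $\Dq^{K_2}$ has eigenvalues $1+q$ and $1-q$).
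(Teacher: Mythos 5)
Your proposal is correct, and your second route (invoking Proposition~\ref{regular} with $k=n-1$, adjacency spectrum $\{n-1,-1,\dots,-1\}$, and simplifying) is exactly the proof the paper gives. Your primary argument --- writing $\Dq^{K_n}=(1-q)I+qJ$ and reading off the spectrum from the rank-one matrix $J$ --- is a genuinely more elementary and self-contained alternative: it avoids the diameter-$\le 2$ machinery entirely and makes the eigenvectors explicit ($\mathbf 1$ and its orthogonal complement), at the cost of not illustrating the general regular-graph tool the paper is showcasing. The arithmetic in both routes checks out, and your remark about the degenerate cases $n=1$ and $n=2$ is a nice touch the paper does not bother with.
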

\begin{proof}
    Notice that $K_n$ is a $(n-1)$-regular graph with diameter $1$. It is also readily known that the eigenvalues for the adjacency matrix of $K_n$ are $(n-1)$ with multiplicity one and $-1$ with multiplicity $n-1$. Thus, we can apply Proposition~\ref{regular} to see that the $\Dq$ matrix of $K_n$ has eigenvalues $(q^2)n - (q^2-q)(n-1) -(q^2-1) =  q(n-1)+1$ with multiplicity one and $(q^2-q)(-1) -q^2 +1  = 1-q$ with multiplicity $n-1$.
\end{proof}
\begin{prop}
    The exponential distance matrix, of the hypercube on $2^n$ vertices, $Q_n$, has eigenvalues $(1-q)^k(1+q)^{n-k}$ with multiplicity of $\binom{n}{k}$, for $0\leq k \leq n$.
\end{prop}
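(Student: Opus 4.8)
The plan is to exploit the standard fact that the hypercube decomposes as an iterated Cartesian product of edges, $Q_n = K_2 \sq K_2 \sq \cdots \sq K_2$ with $n$ factors, together with Theorem~\ref{cart}, which states that the $\Dq$-spectrum of a Cartesian product is the multiset of all pairwise products of the $\Dq$-spectra of the factors.

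First I would record the base case. Since $K_2$ is the complete graph on two vertices, Proposition~\ref{kspecprop} (with $n=2$) gives $\spec(\Dq^{K_2}) = \{q(2-1)+1,\ 1-q\} = \{1+q,\ 1-q\}$, each with multiplicity one; equivalently one can read this off directly from $\Dq^{K_2} = \left(\begin{smallmatrix} 1 & q \\ q & 1\end{smallmatrix}\right)$, whose eigenvalues are visibly $1\pm q$. Next I would argue by induction on $n$ that $\spec(\Dq^{Q_n})$ is the multiset $\{\,(1-q)^{|S|}(1+q)^{n-|S|} : S \subseteq \{1,\dots,n\}\,\}$. Writing $Q_n = Q_{n-1}\sq K_2$ and applying Theorem~\ref{cart}, every eigenvalue of $\Dq^{Q_n}$ is a product $\nu\cdot\epsilon$ with $\nu\in\spec(\Dq^{Q_{n-1}})$ and $\epsilon\in\{1+q,1-q\}$; the inductive hypothesis expresses each such $\nu$ as $(1-q)^{j}(1+q)^{\,n-1-j}$ ranging over the $(n{-}1)$-subsets, and appending the binary choice of $\epsilon$ yields exactly the claimed multiset indexed by the $n$-subsets.

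Finally I would collect terms by $k := |S|$: the value $(1-q)^k(1+q)^{n-k}$ arises once for each $k$-element subset of $\{1,\dots,n\}$, hence with multiplicity $\binom{n}{k}$, and $\sum_{k=0}^{n}\binom{n}{k} = 2^n$ accounts for every eigenvalue, as it must since $|V(Q_n)| = 2^n$. This gives precisely the stated list of eigenvalues and multiplicities.

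There is no substantial obstacle here; the only two points that warrant a sentence of care are (i) justifying the Cartesian-product decomposition of $Q_n$, which is immediate from the definitions of the hypercube and of $\sq$, and (ii) being explicit that Theorem~\ref{cart} is to be read multiset-wise, so that the count $\binom{n}{k}$ is the algebraic multiplicity of $(1-q)^k(1+q)^{n-k}$ as a root of $P_{\Dq,Q_n}(x)$, regardless of whether distinct subsets happen to produce numerically equal eigenvalues for special values of $q$.
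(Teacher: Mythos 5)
Your proposal is correct and follows essentially the same route as the paper: decompose $Q_n$ as an $n$-fold Cartesian product of the two-vertex graph (your $K_2$ is the paper's $P_2$), read off the spectrum $\{1-q,\,1+q\}$ of the factor, and apply Theorem~\ref{cart} to obtain all $2^n$ products, counted by $\binom{n}{k}$. The only difference is cosmetic --- you phrase the iteration as an explicit induction and add a remark about multiset multiplicities, while the paper applies the product theorem to all $n$ factors at once.
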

\begin{proof}
    First, notice that $Q_n = P_2 \sq P_2 \sq \cdots \sq P_2$, i.e., the hypercube is the Cartesian product of $n$ copies of the path on two vertices. The exponential distance matrix of $P_2$ is
    \begin{align*}
        \Dq^{P_2} = 
        \begin{pmatrix}
            1 & q \\
            q & 1 \\
        \end{pmatrix},
    \end{align*}
    which has spectrum $\{(1-q),(1+q)\}$. By Theorem~\ref{cart}, we know that all the eigenvalues for $P_2 \sq P_2 \sq \cdots \sq P_2$ can be created by picking either $(1-q)$ or $(1+q)$ for each of the $n$ copies of $P_2$ and taking the resulting product. For $0\leq k \leq n$, we can select $k$ copies of $(1-q)$, which forces the remaining $(n-k)$ selections to be $(1+q)$. There are $\binom{n}{k}$ ways to do this, so this completes the proof.
\end{proof}

\begin{prop}
The exponential distance matrix, $\Dq^{C_n}$, of the cycle on $n$ vertices, $C_n$, has the spectrum \[
\bigg\{1+2\sum_{k=1}^{(n-1)/2}q^k\cos(\tfrac{2\pi kj}n) \mid 1\le j \le n\bigg\}
\]
when $n$ is odd and
\[
\bigg\{1+(-1)^jq^{n/2}+2\sum_{k=1}^{(n-2)/2}q^k\cos(\tfrac{2\pi kj}n) \mid 1\le j \le n\bigg\}
\]
when $n$ is even.
%
\end{prop}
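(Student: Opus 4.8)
The plan is to exploit the cyclic symmetry of $C_n$. Label the vertices $0,1,\dots,n-1$ in cyclic order, so that
\[
\dist_{C_n}(i,j)=\min\big(|i-j|,\,n-|i-j|\big)
\]
depends only on $(i-j)\bmod n$. Consequently $\Dq^{C_n}$ is a \emph{circulant} matrix whose first row is $(c_0,c_1,\dots,c_{n-1})$ with $c_0=1$ and $c_k=q^{\min(k,\,n-k)}$ for $1\le k\le n-1$. The eigenvalues of a circulant matrix with first row $(c_0,\dots,c_{n-1})$ are the standard facts $\lambda_j=\sum_{k=0}^{n-1}c_k\,\omega^{kj}$ for $j$ ranging over a complete residue system modulo $n$, where $\omega=e^{2\pi i/n}$, with eigenvectors $(1,\omega^{j},\dots,\omega^{(n-1)j})^T$; these $n$ Fourier vectors are linearly independent, so this gives the full spectrum with multiplicity. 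Thus the whole computation reduces to simplifying $\sum_{k=0}^{n-1}c_k\omega^{kj}$.

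For the simplification I would split on the parity of $n$. If $n$ is odd, then for each $k$ with $1\le k\le (n-1)/2$ we have $c_k=c_{n-k}=q^k$, and these account for all nonzero off-diagonal coefficients; pairing the $k$ and $n-k$ terms and using $\omega^{kj}+\omega^{-kj}=2\cos(2\pi kj/n)$ gives
\[
\lambda_j=1+2\sum_{k=1}^{(n-1)/2}q^{k}\cos\!\big(\tfrac{2\pi kj}{n}\big),
\]
which is the claimed formula. If $n$ is even, the same pairing works for $1\le k\le (n-2)/2$, but there is one unpaired index $k=n/2$, contributing $c_{n/2}\,\omega^{(n/2)j}=q^{n/2}e^{i\pi j}=(-1)^j q^{n/2}$; collecting terms yields
\[
\lambda_j=1+(-1)^j q^{n/2}+2\sum_{k=1}^{(n-2)/2}q^{k}\cos\!\big(\tfrac{2\pi kj}{n}\big),
\]
again matching the statement. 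Letting $j$ run over $1,\dots,n$ then lists all $n$ eigenvalues.

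I do not expect a serious obstacle: once the circulant structure is recognized, the rest is bookkeeping. The only points that need care are (i) verifying that $\dist_{C_n}(i,j)$ genuinely depends only on $(i-j)\bmod n$, so the matrix really is circulant, and (ii) handling the even case correctly — ensuring the middle coefficient $q^{n/2}$ is counted exactly once rather than doubled, and correctly evaluating $\omega^{(n/2)j}=(-1)^j$.
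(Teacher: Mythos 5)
Your proposal is correct and follows essentially the same route as the paper: recognize that $\Dq^{C_n}$ is circulant, apply the standard eigenvalue formula for circulant matrices, and pair the $k$ and $n-k$ terms via $e^{i\theta}+e^{-i\theta}=2\cos\theta$, with the unpaired $k=n/2$ term handled separately when $n$ is even. Your write-up simply spells out the bookkeeping that the paper leaves implicit.
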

\begin{proof}
   For any circulant matrix $M$, $\spec (M) = \{m_1 + m_2\zeta_i + m_3\zeta_i^2 + \dots + m_n\zeta_i^{n-1} \mid 1 \leq i \leq n\}$, where $m_1, \dots, m_n$ are the entries in the first row of $M$. The matrix $\Dq^{C_n}$ is circulant and has entries in its first row $1, q, \ldots, q^{\frac{n}{2}}, q^{\frac{n}{2}-1}, \ldots, q$ for even $n$ and $1, q, \ldots, q^{\frac{n-1}{2}}, q^{\frac{n-1}{2}}, \ldots, q$ for odd $n$. The result emerges immediately from these two facts, along with  $2\cos({\theta}) = e^{i\theta} + e^{-i\theta}$
\end{proof}
We can also generate $\Dq$ the spectrum of the wheel $W_n$ on $n+1$ vertices using Proposition~\ref{join}.
\begin{prop}
The spectrum of $\Dq^{W_n}$ on $n+1$ vertices is equal to $$\{\phi_1, \phi_2\} \cup\{-(q^2-q)(2\cos(\frac{2\pi j}{n})) - q^2 + 1 \mid 2 \leq j \leq n\},$$ with values of $\phi_1, \phi_2$ established in Proposition~\ref{join}, for $k=2$, $\ell = 0$, and $m = 1$.
\end{prop}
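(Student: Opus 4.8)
The plan is to recognize the wheel as a join and invoke Proposition~\ref{join} directly. Specifically, $W_n$ on $n+1$ vertices is $C_n \vee K_1$: the rim is the cycle $C_n$ on $n$ vertices and the hub is a single vertex $K_1$ joined to all of them. To apply Proposition~\ref{join} I first check its hypotheses: $C_n$ is $2$-regular on $n$ vertices, so in the notation of that proposition $k=2$ and the first graph has $n$ vertices; $K_1$ is $0$-regular on $1$ vertex, so $\ell = 0$ and $m = 1$. Both graphs are regular, so the proposition applies verbatim.

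Next I would substitute the known adjacency spectra. It is standard that the adjacency matrix of $C_n$ has eigenvalues $2\cos(2\pi j/n)$ for $1 \le j \le n$, with the Perron value $2$ occurring at $j = n$; thus the ``remaining'' eigenvalues $\lambda_2,\dots,\lambda_n$ in Proposition~\ref{join} are exactly $\{2\cos(2\pi j/n) \mid 2 \le j \le n\}$. The adjacency matrix of $K_1$ has the single eigenvalue $\mu_1 = 0$, so there are no eigenvalues $\mu_2,\dots,\mu_m$ and the third set in the statement of Proposition~\ref{join} is empty. Feeding these into the formula from Proposition~\ref{join} immediately yields the set $\{-(q^2-q)(2\cos(2\pi j/n)) - q^2 + 1 \mid 2 \le j \le n\}$, together with the two extra eigenvalues $\phi_1,\phi_2$.

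Finally I would record that $\phi_1,\phi_2$ are precisely the eigenvalues of the $2\times 2$ quotient matrix in Proposition~\ref{join} evaluated at $k=2$, $\ell=0$, $m=1$ (and $n$ vertices in the cycle), which is exactly the parametrization asserted. A count confirms consistency: the cosine set contributes $n-1$ eigenvalues and $\phi_1,\phi_2$ contribute $2$ more, for a total of $n+1 = |V(W_n)|$. I do not anticipate a genuine obstacle here, since the argument is a specialization of an already-proved result; the only points requiring care are the bookkeeping that the wheel's vertex count matches ($n + 1 = |C_n| + |K_1|$) and that the empty index range $2 \le j \le 1$ correctly removes the $H$-contribution, both of which are routine.
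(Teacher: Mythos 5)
Your proof is correct and is essentially the paper's own argument: recognize $W_n = C_n \vee K_1$, verify regularity with $k=2$, $\ell=0$, $m=1$, and specialize Proposition~\ref{join} using the standard adjacency spectrum of $C_n$; the paper's proof is just a one-line version of this. The only quibble is the index bookkeeping (the Perron eigenvalue $2$ of $C_n$ occurs at $j=n$, so the non-Perron eigenvalues are indexed by $1\le j\le n-1$ rather than $2\le j\le n$), but that imprecision is inherited from the statement itself and does not affect the argument.
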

\begin{proof}
    Because $W_n = C_n \vee K_1$, we can produce the result stated in the proposition by applying the above parameters to the formula established in Proposition~\ref{join}.
\end{proof}

One nice family of graphs are the Kneser graphs $KG(n,r)$ which has as its ${n\choose r}$ vertices all of the $r$ element subsets of $[n]=\{1,2,\ldots,n\}$ and two vertices are connected by an edge if and only if the corresponding sets are disjoint.  As an example $KG(n,1)$ is the complete graph on $n$ vertices; and $KG(5,2)$ is the Petersen graph.  The spectral properties of these graphs have been well studied and so we are able to find the spectrum for these graphs using an approach adapted from \cite{Aalipour}.  (In general, the spectrum for the Kneser graph can be determined for any matrix where the entries are based on a function of the distance.)

\begin{prop}
The $\Dq$ spectrum of the Kneser Graph $KG(n,r)$ consists of the eigenvalues
\begin{align*}
    \lambda_j = \sum_{i=0} ^r q^{f(i)}p_i(j)
\end{align*}
with multiplicity $m_j = \frac{n-2j+1}{n-j+1}\binom{n}{j}$ for $j = 0,1, \dots, r$, where \begin{align*}
    p_i(j) = \sum_{t=0}^j (-1)^t \binom{j}{t}\binom{r-j}{i-t} \binom{n-r-j}{i-t}
\end{align*}
and 
\begin{align*}
    f(i) = \min\Big\{2\Big\lceil\frac{i}{n-2r} \Big\rceil, 2\Big\lceil\frac{r-i}{n-2r} \Big\rceil +1 \Big\}.
\end{align*}

\end{prop}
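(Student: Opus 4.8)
The plan is to observe that $\Dq^{KG(n,r)}$ lies in the Bose--Mesner algebra of the Johnson association scheme $J(n,r)$ and then read off its spectrum from the classical spectral decomposition of that algebra. Throughout assume $n>2r$, so that $KG(n,r)$ is connected and $f$ is well defined. For $0\le i\le r$ let $A_i$ be the $\binom nr\times\binom nr$ $0/1$ matrix indexed by the $r$-subsets of $[n]$ whose $(A,B)$-entry is $1$ exactly when $|A\cap B|=r-i$; thus $A_0=I$ and $A_0,A_1,\dots,A_r$ are symmetric, pairwise commuting, and span a commutative algebra (the standard set-up for $J(n,r)$).

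The first step is to check that the Kneser distance between vertices $A,B$ depends only on $|A\cap B|$, and equals $f(i)$ when $|A\cap B|=r-i$. For the upper bound one exhibits explicit walks: a suitable pair of consecutive steps from a set can be arranged to increase its intersection with a fixed target by $n-2r$ elements (until the target is reached), which yields a walk from $A$ to $B$ of even length $2\lceil i/(n-2r)\rceil$, while combining a single step with a walk of length $2\lceil(r-i)/(n-2r)\rceil$ handles the odd alternative; taking the shorter of the two gives $\dist(A,B)\le f(i)$. The matching lower bound is an extremal counting argument on how much a walk of a given length can alter an intersection pattern. (This is the known distance formula for Kneser graphs.) Granting it, the $(A,B)$-entry of $\Dq^{KG(n,r)}$ is $q^{f(i)}$ whenever $|A\cap B|=r-i$, so
\[
\Dq^{KG(n,r)}=\sum_{i=0}^{r}q^{f(i)}A_i ,
\]
and hence $\Dq^{KG(n,r)}$ lies in the Bose--Mesner algebra of $J(n,r)$.

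The second step is the classical spectral theory of $J(n,r)$. The algebra $\langle A_0,\dots,A_r\rangle$ has $r+1$ common eigenspaces $V_0,\dots,V_r$ with $\dim V_j=\binom nj-\binom n{j-1}=\frac{n-2j+1}{n-j+1}\binom nj=m_j$, and $A_i$ acts on $V_j$ as multiplication by the Eberlein polynomial
\[
p_i(j)=\sum_{t=0}^{j}(-1)^{t}\binom jt\binom{r-j}{i-t}\binom{n-r-j}{i-t};
\]
equivalently $A_i=\sum_{j=0}^{r}p_i(j)E_j$, where $E_j$ is the orthogonal projection onto $V_j$. Substituting into the displayed identity gives
\[
\Dq^{KG(n,r)}=\sum_{j=0}^{r}\Big(\sum_{i=0}^{r}q^{f(i)}p_i(j)\Big)E_j ,
\]
so $\lambda_j=\sum_{i=0}^{r}q^{f(i)}p_i(j)$ is an eigenvalue of $\Dq^{KG(n,r)}$ with multiplicity $\dim V_j=m_j$, and since $\sum_{j=0}^{r}m_j=\binom nr$ these exhaust the spectrum. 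This is exactly the strategy of \cite{Aalipour} for the ordinary distance matrix; because $f$ depends only on the scheme relation, the same argument works for any matrix whose entries are a function of distance, and in particular for $\Dq$.

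The main obstacle is the combinatorial distance formula $\dist_{KG(n,r)}(A,B)=f(i)$: the upper bound needs explicit walks together with attention to parity (which of the two expressions in the $\min$ is active), and the lower bound needs an extremal argument excluding shorter walks. Once this is in hand the rest is a routine unwinding of the Bose--Mesner decomposition, for which the multiplicities $m_j$ and the Johnson-scheme eigenvalues $p_i(j)$ are standard and may be quoted.
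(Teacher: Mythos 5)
Your proposal is correct and follows essentially the same route as the paper: both express $\Dq^{KG(n,r)}$ as $\sum_{i=0}^r q^{f(i)}A_i$ in the Bose--Mesner algebra of the Johnson scheme (quoting the distance formula $\dist(A,B)=f(i)$ and the eigenvalues $p_i(j)$ with multiplicities $m_j$ from the literature) and then read off the spectrum from the common eigenspace decomposition. The only difference is that you additionally sketch how one would verify the distance formula, which the paper simply cites.
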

\begin{proof}
Let $A_i$ be the adjacency matrix of the generalized Kneser graph $J(n;r;i)$ for $0 \leq i \leq r$ where in $J(n;r;i)$ are adjacent if and only if their corresponding subsets share $r-i$ elements.  It is known \cite{Aalipour,BI} that the matrices $A_i$ form a commutative following (in particular share common eigenvectors) and moreover the spectra of each $A_i$ is the set $p_i(j)$ with multiplicity $m_j$.  Note that the $A_i$ form a decomposition of the all $1$s matrix.

For the distance matrix it was shown \cite{Aalipour}
\begin{align*}
    \mathcal{D}=\sum_{i=0}^r f(i)A_i
\end{align*}
where 
$f(i) = \min\Big\{2\Big\lceil\frac{i}{n-2r} \Big\rceil, 2\Big\lceil\frac{r-i}{n-2r} \Big\rceil +1 \Big\}$.  In particular, it follows that if an entry in $A_i$ is $1$ the distance between the corresponding vertices is $f(i)$.  So for the exponential distance matrix the corresponding entry will be $q^{f(i)}$, so we have
\begin{align*}
    \mathcal{D}_q=\sum_{i=0}^r q^{f(i)}A_i.
\end{align*}
The result for the spectrum now follows.
\end{proof}

\section{Information from the characteristic polynomial}\label{sec:poly}
In this section we examine what information can be obtained about a graph from its $\Dq$ spectrum and/or characteristic polynomial. These observations tell us which properties of the graph are preserved by the eigenvalues of the $\Dq$ matrix and what restrictions can be placed on a pair of graphs if they are $\Dq$-cospectral. 

\begin{lem} \label{lem:comp}
Let $\mathcal{D}_1$ be $\Dq$ with $q=1$. We have $\spec(\mathcal{D}_1) =\{t_1, t_2, \dots,t_k, 0, \dots, 0 \}$
    where $k$ is the number of components of $G$ and $t_i$ is the size of the $i$-th component of $G$.
\end{lem}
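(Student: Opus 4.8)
The plan is to observe that when $q=1$, the matrix $\mathcal{D}_1 = \Dq^G$ with $q=1$ has entries $(\mathcal{D}_1)_{u,v} = 1^{\dist(u,v)} = 1$ whenever $u$ and $v$ lie in the same component, and $0$ otherwise. First I would reduce to the connected case by invoking Proposition~\ref{union}: the spectrum of $\mathcal{D}_1$ for $G$ is the union (with multiplicity) of the spectra of $\mathcal{D}_1$ for each component $G_1,\dots,G_k$, so it suffices to identify $\spec(\mathcal{D}_1^{G_i})$ for a connected graph $G_i$ on $t_i$ vertices.

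For a connected graph on $t$ vertices, $\mathcal{D}_1$ is exactly the all-ones matrix $J_t$. The key step is then the standard fact that $J_t$ has rank one, with eigenvalue $t$ (eigenvector $\mathbf{1}$, since row sums equal $t$) of multiplicity one and eigenvalue $0$ of multiplicity $t-1$ (the orthogonal complement of $\mathbf{1}$ lies in the kernel). Hence $\spec(\mathcal{D}_1^{G_i}) = \{t_i, 0, \dots, 0\}$ with $t_i - 1$ copies of $0$.

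Combining these across the $k$ components, the nonzero eigenvalues contributed are exactly $t_1, t_2, \dots, t_k$, and the number of zero eigenvalues is $\sum_{i=1}^k (t_i - 1) = n - k$, where $n = |V(G)|$. This gives $\spec(\mathcal{D}_1) = \{t_1, t_2, \dots, t_k, 0, \dots, 0\}$ as claimed.

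There is essentially no obstacle here; the only point requiring a word of care is that $t_i$ could coincide for different components (so the multiplicities must be tracked as a multiset, which the union in Proposition~\ref{union} already handles) and the degenerate case of an isolated vertex, where $t_i = 1$ and the block is the $1\times1$ matrix $(1)$, consistent with the formula (contributing the eigenvalue $1$ and no zeros).
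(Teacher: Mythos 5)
Your proposal is correct and follows the same route as the paper: reduce to components via the block-diagonal structure (Proposition~\ref{union}), observe that each connected component's $\mathcal{D}_1$ block is the all-ones matrix $J_{t_i}$, and use $\spec(J_{t_i})=\{t_i,0^{(t_i-1)}\}$. The extra remarks about multiset multiplicities and isolated vertices are fine but not needed.
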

\begin{proof} The exponential distance matrix of $G$ can be written in block diagonal form such that
    \begin{align*}
        \Dq^G = 
        \begin{pmatrix}
            \Dq^{G_1} & O & \cdots & O \\
            O & \Dq^{G_2} & \ddots & \vdots\\
            \vdots & \ddots & \ddots & O\\
            O & \cdots & O & \Dq^{G_k} \\
        \end{pmatrix},
    \end{align*}
    where each $\Dq^{G_i}$ is the $\Dq$ matrix of $G_i$, the $i$-th connected component of $G$.  We know $\spec(\Dq) = \bigcup_{i=1}^k \spec(\Dq^{G_i})$, so let us consider $\spec(\Dq^{G_i})$. Notice that $\mathcal{D}_1^{G_i} = J_{t_i}$
    where $J_{t_i}$ is a ${t_i} \times {t_i}$ matrix of all ones. We know the $\spec(J_{t_i}) = \{t_i, 0^{(t_i - 1)}\}$. We apply this to each component of $G$ to complete the proof. 
\end{proof}
\begin{obs}\label{component}
    The preceding proof allows us to derive the following result. Let $G$ be a graph with exponential distance matrix $\Dq^G$ with eigenvalue $\lambda_i(q)$, such that $\displaystyle{\lim_{q \to 1}\lambda_i(q)} = t_i \not = 0$. Then there exists a component, $G_i$, of $G$ that has $t_i$ vertices with eigenvalue $\lambda_i(q)$.
\end{obs}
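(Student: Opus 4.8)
The plan is to combine the block-diagonal structure of $\Dq^G$ with the continuity of the eigenvalues in $q$ and the explicit limit already computed in Lemma~\ref{lem:comp}. Write $G = G_1 \cup \cdots \cup G_k$ for the decomposition of $G$ into connected components, with $|V(G_j)| = t_j$. By Proposition~\ref{union} (equivalently, the block-diagonal form used in the proof of Lemma~\ref{lem:comp}), $\spec(\Dq^G)$ is the union, counted with multiplicity, of the spectra $\spec(\Dq^{G_j})$, and moreover $P_{\Dq,G}(x) = \prod_{j=1}^k P_{\Dq,G_j}(x)$.

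First I would identify which component the branch $\lambda_i(q)$ comes from. Since $\Dq^G$ is block diagonal with a fixed block pattern (independent of $q$) and entries polynomial in $q$, one may choose the eigenvalue branches of $\Dq^G$ to be precisely the union of the eigenvalue branches of the individual blocks $\Dq^{G_j}$: diagonalize each block separately and list its continuous eigenvalue functions. Thus, after relabeling, $\lambda_i(q)$ is an eigenvalue branch of a single fixed block $\Dq^{G_j}$, i.e.\ $\lambda_i(q) \in \spec(\Dq^{G_j})$ for all $q$ in the relevant interval.

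Next I would pass to the limit $q \to 1$. The entries of $\Dq^{G_j}$ are polynomials in $q$ and $\Dq^{G_j}\big|_{q=1} = J_{t_j}$, so the eigenvalues of $\Dq^{G_j}$ depend continuously on $q$ up to $q=1$, and the multiset of limits $\big\{\lim_{q\to 1}\lambda(q) : \lambda \in \spec(\Dq^{G_j})\big\}$ equals $\spec(J_{t_j}) = \{t_j,\,0^{(t_j-1)}\}$, exactly as in the proof of Lemma~\ref{lem:comp}. Since $\lim_{q\to 1}\lambda_i(q) = t_i \neq 0$, the branch $\lambda_i$ must be the one whose limit is the unique nonzero eigenvalue of $J_{t_j}$; hence $t_i = t_j$. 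Therefore $G_j$ is a component with exactly $t_i$ vertices and $\lambda_i(q)$ is an eigenvalue of $\Dq^{G_j}$, which is the assertion.

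The step I expect to be the main obstacle is the bookkeeping in the second paragraph: making precise that the branch $\lambda_i(q)$ belongs to one fixed block rather than ``jumping'' from one component's spectrum to another's as $q$ varies. Because $\Dq^G$ is literally block diagonal this is not a genuine difficulty --- one simply builds the branches block by block --- but if one instead starts from an abstract branch of $\spec(\Dq^G)$, one should note that $q \mapsto P_{\Dq,G_j}(\lambda_i(q))$ is continuous for each $j$, that the union of the zero sets of these $k$ functions is the whole parameter interval, and hence (using that $\lambda_i$, and therefore each composite, may be taken analytic in $q$) that one of them vanishes identically. The remaining steps are immediate from the block structure and from Lemma~\ref{lem:comp}.
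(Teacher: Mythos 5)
Your proposal is correct and follows essentially the same route the paper intends: the Observation is derived directly from the proof of Lemma~\ref{lem:comp}, namely the block-diagonal decomposition of $\Dq^G$ over components together with the fact that at $q=1$ each block becomes $J_{t_j}$ with spectrum $\{t_j, 0^{(t_j-1)}\}$, so a branch with nonzero limit $t_i$ must live in a block of size $t_i$. Your extra care about eigenvalue branches not jumping between blocks is a reasonable tightening of the same argument rather than a different approach.
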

\begin{lem}\label{distancelemma}
    Given the characteristic polynomial of a graph $G$, one can determine the number of pairs of vertices that are distance $k$ apart for any $k$. 
\end{lem}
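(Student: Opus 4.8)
The plan is to extract, from the characteristic polynomial $P_{\Dq,G}(x)$, the multiset of entries appearing in $\Dq^G$, since the number of pairs of vertices at distance exactly $k$ is precisely $\tfrac12$ times the number of off-diagonal entries equal to $q^k$. The natural handle is the trace of powers, or equivalently the power-sum symmetric functions of the eigenvalues, which are recoverable from the coefficients of $P_{\Dq,G}(x)$ via Newton's identities. Concretely, $\sum_i \lambda_i^2 = \operatorname{tr}\big((\Dq^G)^2\big) = \sum_{u,v} (\Dq^G)_{u,v}^2 = n + 2\sum_{\text{pairs }u\ne v} q^{2\dist(u,v)}$, where $n = |V(G)|$ is read off as the degree of $P_{\Dq,G}$. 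So $\operatorname{tr}\big((\Dq^G)^2\big) - n$, as a polynomial in $q$, equals $2\sum_{k\ge 1} d_k q^{2k}$, where $d_k$ is the number of unordered pairs at distance $k$. Reading off the coefficient of $q^{2k}$ in this polynomial and dividing by $2$ gives $d_k$.

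The key steps, in order: (1) recover $n$ as $\deg P_{\Dq,G}$; (2) use Newton's identities to compute $p_2 := \sum_i \lambda_i^2$ from the two leading nontrivial coefficients of $P_{\Dq,G}(x)$ (namely $p_1 = \operatorname{tr}\Dq^G = n$, which is forced, and $p_2 = (\operatorname{tr}\Dq^G)^2 - 2e_2$ where $e_2$ is the coefficient of $x^{n-2}$); (3) observe $\operatorname{tr}\big((\Dq^G)^2\big) = \sum_{u,v}(\Dq^G)_{u,v}^2$ because $\Dq^G$ is symmetric, and expand this sum over diagonal ($=n$) and off-diagonal ($=2\sum_{k\ge1} d_k q^{2k}$) contributions, using the convention that entries between distinct components are $0$ and contribute nothing; (4) equate coefficients of $q^{2k}$ to solve for each $d_k$. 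Note that the polynomial $P_{\Dq,G}(x)$ has coefficients that are themselves polynomials in $q$, so all of this is an identity in $\mathbb{Z}[q]$, and comparing coefficients of $q^{2k}$ is legitimate.

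I expect the main (minor) obstacle to be bookkeeping around the convention $q^{\dist(u,v)} = 0$ for vertices in different components: one must check that such pairs simply do not appear in the sum $\sum_{u,v}(\Dq^G)_{u,v}^2$, so that $d_k$ counts only same-component pairs — which is exactly what "distance $k$ apart" should mean (pairs in different components are not at any finite distance). A secondary point to state carefully is that using $\operatorname{tr}\big((\Dq^G)^2\big)$ alone suffices: one does not need higher power sums, because squaring already separates the distance classes by the exponent of $q$. (Alternatively, one could note $\operatorname{tr}\big((\Dq^G)^2\big)$ is itself determined by the eigenvalues, hence by the spectrum, hence by $P_{\Dq,G}$, short-circuiting the explicit Newton-identity computation; but making the dependence on the coefficients explicit is cleaner.) Everything else is a routine extraction of coefficients.
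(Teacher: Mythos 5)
Your proposal is correct and follows essentially the same route as the paper: both arguments recover the quantity $\sum_{i\ne j}q^{2\dist(i,j)}$ from the degree-$(n-2)$ information in the characteristic polynomial and then read off the coefficients of $q^{2k}$. The paper phrases this as the coefficient of $x^{n-2}$ in $P_{\Dq,G}(x+1)$ (so the shift kills the diagonal), while you phrase it as $\operatorname{tr}\big((\Dq^G)^2\big)-n$ obtained via Newton's identities; these are the same computation up to the identity $p_2=e_1^2-2e_2$.
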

\begin{proof}
    The coefficient of $x^{n-2}$ in the characteristic polynomial for $P_{\Dq,G}(x+1)$, as considered in Theorem~\ref{adjacency}, is
    \[
        -\sum_{i\ne j}(\Dq^G)_{i,j}(\Dq^G)_{j,i}=
        -\sum_{i\ne j}q^{2\dist(i,j)}=
        -\sum_k b_kq^{2k},
    \]
    where $b_k$ is the number of pairs of vertices at distance $k$.
    %
    %
\end{proof}

\begin{thm}\label{specinfo}
    Given the spectrum of the exponential distance matrix of a graph $G$ we can determine the following properties:
    \begin{enumerate}
        \item The number of components.
        \item The size of each component.
        \item The number of pairs of vertices distance $k$ apart for any $k$.
        \item The number of edges.
        \item The diameter.
    \end{enumerate}
\end{thm}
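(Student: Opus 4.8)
The plan is to prove each of the five items in turn, drawing mostly on results already established in the excerpt. For item (1) and (2), the key tool is Lemma~\ref{lem:comp} together with Observation~\ref{component}: substituting $q=1$ into the characteristic polynomial (which we can do, since the spectrum determines the characteristic polynomial and the characteristic polynomial is a polynomial in $x$ and $q$) yields $\mathcal{D}_1$, whose spectrum is a multiset of component sizes together with zeros. The number of nonzero eigenvalues of $\mathcal{D}_1$ is exactly the number of components, and the nonzero values themselves are the component sizes; one must observe that a component of size one still contributes the eigenvalue $1$, so there is no collision with the zero eigenvalues, and this recovers the full multiset of component sizes.

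For item (3), I would invoke Lemma~\ref{distancelemma}: the coefficient of $x^{n-2}$ in $P_{\Dq,G}(x+1)$ is $-\sum_k b_k q^{2k}$ where $b_k$ is the number of pairs of vertices at distance $k$. Since the spectrum determines $P_{\Dq,G}(x)$, hence $P_{\Dq,G}(x+1)$, hence this coefficient as a polynomial in $q$, reading off the coefficient of $q^{2k}$ gives $b_k$ for every $k\geq 1$. Item (4), the number of edges, is then immediate as the special case $k=1$: $b_1$ is precisely $|E(G)|$ (this also follows directly from Theorem~\ref{adjacency}, since the $q^2 x^{n-2}$ coefficient of $P_{\Dq,G}(x+1)$ equals the $x^{n-2}$ coefficient of $P_{A,G}(x)$, which is $-|E(G)|$, but the route through Lemma~\ref{distancelemma} is cleaner given what is already written). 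Item (5), the diameter, is the largest $k$ with $b_k \neq 0$ — provided the graph is connected. If $G$ is disconnected, the diameter is conventionally infinite, but by items (1) and (2) we already know whether $G$ is connected, so we can state the diameter result under that hypothesis or with the appropriate convention.

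The main subtlety — and the step I would be most careful about — is making explicit that the \emph{spectrum} (an unordered multiset of eigenvalues, each possibly a function of $q$) determines the \emph{characteristic polynomial as a polynomial in $x$ and $q$}, so that all the coefficient-extraction arguments are legitimate. The eigenvalues here are algebraic functions of $q$, and $P_{\Dq,G}(x) = \prod_i (x - \lambda_i(q))$; expanding this product gives a polynomial in $x$ whose coefficients are symmetric functions of the $\lambda_i(q)$, and these are polynomials in $q$ since $\Dq$ has polynomial entries. So once the multiset $\{\lambda_i(q)\}$ is known, $P_{\Dq,G}(x)$ is determined. After that observation, items (3)–(5) are routine coefficient comparisons, and items (1)–(2) follow from the $q\to 1$ specialization. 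I would present the proof as a short enumerated list mirroring the statement, citing Lemma~\ref{lem:comp}, Observation~\ref{component}, and Lemma~\ref{distancelemma} for the substantive content.

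\begin{proof}[Proof sketch of plan]
The spectrum $\{\lambda_1(q),\dots,\lambda_n(q)\}$ determines $P_{\Dq,G}(x)=\prod_{i=1}^n(x-\lambda_i(q))$ as a polynomial in $x$ with coefficients that are polynomials in $q$. Items (1) and (2) follow by setting $q=1$ and applying Lemma~\ref{lem:comp}: the multiset of nonzero eigenvalues of $\mathcal{D}_1$ is exactly the multiset of component sizes. Item (3) follows from Lemma~\ref{distancelemma} by reading off the coefficient of $q^{2k}$ in the $x^{n-2}$-coefficient of $P_{\Dq,G}(x+1)$. Item (4) is the case $k=1$ of item (3), and item (5) is the largest $k$ for which the count in item (3) is nonzero (using items (1)--(2) to detect disconnectedness).
\end{proof}
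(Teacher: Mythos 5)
Your proposal is correct and follows essentially the same route as the paper, which simply cites Observation~\ref{component} and Lemma~\ref{distancelemma}; you have filled in the same deductions (items (1)--(2) from the $q\to 1$ specialization, item (3) from the $x^{n-2}$ coefficient, items (4)--(5) as the $k=1$ case and the largest $k$ with $b_k\neq 0$). Your explicit remark that the spectrum determines $P_{\Dq,G}(x)$ as a polynomial in $x$ and $q$ is a worthwhile clarification the paper leaves implicit.
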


\begin{proof}
    This follows from Observation~\ref{component} and Lemma~\ref{distancelemma}.
\end{proof}

\begin{cor}
Whether or not a graph is a forest can be determined by the spectrum of $\Dq$. 
\end{cor}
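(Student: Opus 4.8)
The plan is to reduce the statement to a counting identity and then invoke Theorem~\ref{specinfo}. Recall the standard characterization from elementary graph theory: a graph $G$ is a forest if and only if it contains no cycle, which happens if and only if
\[
|E(G)| = |V(G)| - c(G),
\]
where $c(G)$ denotes the number of connected components of $G$. (Indeed, each component of a forest is a tree with one fewer edge than vertices, and conversely any graph satisfying this equality with each component connected must be acyclic.) So it suffices to show that the three quantities $|E(G)|$, $|V(G)|$, and $c(G)$ are all determined by $\spec(\Dq^G)$.

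Each of these is handed to us directly. The number of vertices $|V(G)|$ is simply the number of eigenvalues counted with multiplicity (equivalently, the dimension of $\Dq^G$), and it can also be read off as the sum of the component sizes from part~(2) of Theorem~\ref{specinfo}. The number of components $c(G)$ is part~(1) of Theorem~\ref{specinfo}, and the number of edges $|E(G)|$ is part~(4). Consequently the truth value of the equation $|E(G)| = |V(G)| - c(G)$ is a function of the spectrum alone, and by the characterization above this decides whether $G$ is a forest.

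There is essentially no obstacle here: the corollary is an immediate packaging of Theorem~\ref{specinfo}, and the only thing to be careful about is stating the forest criterion correctly for possibly disconnected graphs (hence the appearance of $c(G)$ rather than just $|E(G)| = |V(G)| - 1$). One could optionally remark that this also recovers, and slightly extends, the earlier observation that trees have been the main object of prior study: a connected graph is a tree precisely when $c(G) = 1$ and $|E(G)| = |V(G)| - 1$, both of which are again spectral.
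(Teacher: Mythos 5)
Your proof is correct and is exactly the argument the paper intends: the corollary is stated without proof as an immediate consequence of Theorem~\ref{specinfo}, and your packaging via the criterion $|E(G)|=|V(G)|-c(G)$, with all three quantities spectrally determined, is the natural (and surely intended) way to fill in the details.
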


\begin{prop}\label{P3}
    Let $P_{\Dq,G}(x)$ be the characteristic polynomial of the exponential distance matrix of a graph $G$ on $n$ vertices. If $k$ is the coefficient of $q^4 x^{n-3}$ in $P_{\Dq,G}(x+1)$, then the number of induced copies of $P_3$ in G is $-\frac{k}{2}$. 
\end{prop}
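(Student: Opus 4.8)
The plan is to mimic the permutation-expansion argument of Theorem~\ref{adjacency}, pushing it out one more degree to isolate the coefficient of $q^4x^{n-3}$. Writing
\[
P_{\Dq,G}(x+1) = \sum_{\sigma\in S_n}\sign(\sigma)\prod_{i=1}^n\big((x+1)I-\Dq\big)_{i,\sigma(i)},
\]
I would record that the $(i,i)$ entries contribute $x$, while an off-diagonal $(i,\sigma(i))$ entry contributes $-q^{\dist(i,\sigma(i))}$. To obtain an $x^{n-3}$ term, $\sigma$ must have exactly $n-3$ fixed points, so the non-fixed part of $\sigma$ is a permutation of a $3$-element set $\{a,b,c\}$: either a single $3$-cycle or a transposition together with a fixed point among $\{a,b,c\}$ — but a fixed point there would contribute a factor of $x$, raising the power of $x$, so for the $x^{n-3}$ coefficient only genuine derangements of $\{a,b,c\}$ survive, i.e.\ the two $3$-cycles $(a\,b\,c)$ and $(a\,c\,b)$, each with sign $+1$.

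Next I would compute the total exponent of $q$ contributed by such a $3$-cycle: it is $\dist(a,b)+\dist(b,c)+\dist(c,a)$, and the two $3$-cycles on $\{a,b,c\}$ give the same exponent, so the pair $\{a,b,c\}$ contributes $2\,q^{\dist(a,b)+\dist(b,c)+\dist(c,a)}x^{n-3}$ (the sign from $(-1)^3$ in the three $-q$ factors cancels against $\sign$ of the $3$-cycle being $+1$, leaving a net $(-1)^3=-1$; I would track this carefully — it is where a sign slip is most likely). Hence the coefficient of $x^{n-3}$ is $-2\sum_{\{a,b,c\}}q^{\dist(a,b)+\dist(b,c)+\dist(c,a)}$, summed over unordered triples. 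The exponent equals $4$ precisely when $\{\dist(a,b),\dist(b,c),\dist(c,a)\}=\{1,1,2\}$ as a multiset, which is exactly the condition that the induced subgraph on $\{a,b,c\}$ is a path $P_3$ (two edges, one non-edge). Triples with all three distances equal to $1$ give exponent $3$ (induced $K_3$), and other configurations give exponent $\ge 5$ or are disconnected; none of these contribute to $q^4x^{n-3}$.

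Therefore the coefficient $k$ of $q^4x^{n-3}$ in $P_{\Dq,G}(x+1)$ equals $-2$ times the number of induced $P_3$'s, giving that count as $-k/2$, as claimed. The main obstacle is purely bookkeeping: making sure no other permutation type contributes to the $x^{n-3}q^4$ monomial — in particular verifying that a transposition with a leftover fixed point really does bump the $x$-degree, and that no triple with a different distance profile (for instance a geodesic triple with distances $1,2,3$, or distances $2,2,2$) secretly sums to $4$. A clean way to organize this is to first fix the power of $x$ (forcing exactly three moved points and hence a $3$-cycle), then enumerate the finitely many multisets of three positive integers summing to $4$, namely $\{1,1,2\}$, and observe that $\{1,1,2\}$ is realizable as a set of pairwise distances only by an induced $P_3$ (the triangle inequality rules nothing out here, but parity/realizability does: distances $1,1,2$ force two edges and a path).
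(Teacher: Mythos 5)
Your proposal is correct and follows essentially the same route as the paper: expand $\det((x+1)I-\Dq)$ over permutations, observe that the $x^{n-3}$ coefficient comes only from $3$-cycles on a triple of vertices, note that the $q$-exponent $\dist(a,b)+\dist(b,c)+\dist(c,a)=4$ forces the distance multiset $\{1,1,2\}$ and hence an induced $P_3$, and account for the factor of $2$ from the two $3$-cycles on each triple. Your write-up is in fact more careful than the paper's about the sign and about ruling out other permutation types and distance profiles.
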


\begin{proof}
    Consider the coefficient of $q^4 x^{n-3}$ in $P_{\Dq,G}(x+1)$, call it $k$. 
    We can again use our technique from the proof of Theorem~\ref{adjacency}. To get a $q^4 x^{n-3}$ term, the non-fixed points, say $(i,j), (j,k), (k,i)$, must multiply together to $-q^4$. This is only possible if two of the terms are $-q$ and one is $-q^2$. This set of distances is only possible with an induced $P_3$.

    Furthermore, each $P_3$ contributes two terms of $-q^4 x^{n-3}$ because the matrix is symmetric so the product of $(i,j), (j,k), (k,i)$ will be the same as the product of $(j,i), (k, j), (i,k)$.
\end{proof}

Notice that the number of induced $P_3$'s is distinct from the number of pairs of vertices that are distance two apart. Consider $C_4$. It has four induced copies of $P_3$, but only two pairs of vertices that are distance two apart. Furthermore, this is a property that is determined for the spectrum of $\Dq$, but not for the spectrum of $A$. The graphs in the Saltire Pair (see Figure~\ref{fig: Saltire pair}) are $A$-cospectral, but they not have the same number of induced copies of $P_3$.
\begin{figure}[h]
    \centering
    \begin{tikzpicture}[scale=0.707]
\node[vertex] (a) at (0,0) {};
\node[vertex] (b) at (2,0) {};
\node[vertex] (c) at (0,2) {};
\node[vertex] (d) at (2,2) {};
\node[vertex] (e) at (1,1) {};
\draw[thick] (a)--(e) (b)--(e) (c)--(e) (d)--(e);
\end{tikzpicture}
\hspace{20pt}
\begin{tikzpicture}[scale=0.707]
\node[vertex] (a) at (0,0) {};
\node[vertex] (b) at (2,0) {};
\node[vertex] (c) at (0,2) {};
\node[vertex] (d) at (2,2) {};
\node[vertex] (e) at (1,1) {};
\draw[thick] (a)--(b)--(d)--(c)--(a);

\end{tikzpicture}
    \caption{Saltire pair.}
    \label{fig: Saltire pair}

\end{figure}
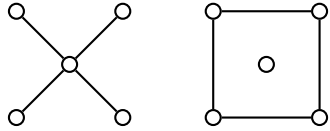
\subsection{Graphs determined by their spectra}
We will now show families of graphs that are spectrally determined for the exponential distance matrix. 

\begin{prop}
    If a graph $G$ is spectrally determined for the adjacency matrix, it is also spectrally determined for $\Dq$.
\end{prop}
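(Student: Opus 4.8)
The plan is to reduce $\Dq$-cospectrality to $A$-cospectrality and then invoke the hypothesis. First I would unwind the definitions: two graphs are $\Dq$-cospectral exactly when their $\Dq$-characteristic polynomials coincide, since a monic polynomial is determined by its multiset of roots even when those roots are algebraic functions of the parameter $q$. In particular, $\Dq$-cospectral graphs automatically have the same number of vertices $n$, because $n$ is the $x$-degree of $P_{\Dq,\cdot}(x)$; this is what lets Theorem~\ref{adjacency} be applied to both graphs at once.

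So suppose $G$ is determined by its adjacency spectrum, and let $H$ be an arbitrary graph that is $\Dq$-cospectral with $G$. Then $P_{\Dq,H}(x) = P_{\Dq,G}(x)$ as polynomials in $x$ with coefficients in $\mathbb{Q}[q]$, and consequently $P_{\Dq,H}(x+1) = P_{\Dq,G}(x+1)$ as well. Comparing the coefficient of the monomial $q^k x^{n-k}$ on the two sides and applying Theorem~\ref{adjacency} to each of $G$ and $H$, we find that these coefficients equal, respectively, the coefficient of $x^{n-k}$ in $P_{A,G}(x)$ and in $P_{A,H}(x)$. Hence $P_{A,G}(x) = P_{A,H}(x)$, i.e.\ $G$ and $H$ are $A$-cospectral.

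Since $G$ is determined by its adjacency spectrum, this forces $H \cong G$. As $H$ was an arbitrary $\Dq$-cospectral mate of $G$, we conclude that $G$ is determined by its $\Dq$-spectrum, which is exactly the assertion.

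The argument is short, and the one place that genuinely needs care — the only real obstacle — is the initial bookkeeping: verifying that sameness of $\Dq$-spectrum really does give equality of the $\Dq$-characteristic polynomials in $\mathbb{Q}[q][x]$ (so that the coefficient extraction in Theorem~\ref{adjacency} is legitimate term by term) and that the vertex counts match without extra assumptions. Once that is in place, the rest is a direct application of Theorem~\ref{adjacency} followed by the definition of "determined by the spectrum."
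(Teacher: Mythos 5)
Your proof is correct and follows exactly the route the paper takes: the paper's entire proof is ``This follows from Theorem~\ref{adjacency},'' and your write-up simply fills in the (correct) details of how that theorem converts $\Dq$-cospectrality into $A$-cospectrality before invoking the hypothesis.
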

\begin{proof}
    This follows from Theorem~\ref{adjacency}.
\end{proof}

There are many known families that are known to be spectrally determined for $A$ (see \cite{vanDam}), these will also be spectrally determined for $\Dq$.

\begin{prop}
\begin{sloppypar}
    If a graph $G$ is spectrally determined for the adjacency matrix and ${\diam(\overline{G})\leq 2}$, then $\overline{G}$ is spectrally determined for $\Dq$.
\end{sloppypar}
\end{prop}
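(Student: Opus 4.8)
The plan is to combine Proposition~\ref{acomp} with the hypothesis that $G$ is spectrally determined for the adjacency matrix. Suppose $H$ is a graph that is $\Dq$-cospectral with $\overline{G}$; we must show $H \cong \overline{G}$. First I would observe that since $\overline{G}$ and $H$ have the same $\Dq$-characteristic polynomial, Theorem~\ref{specinfo} guarantees that $H$ has the same diameter as $\overline{G}$, so $\diam(H) \leq 2$ as well. This is the key point that lets us apply Proposition~\ref{acomp} to \emph{both} graphs: from $P_{\Dq,\overline{G}}(x+1) = P_{\Dq,H}(x+1)$ we read off that the coefficient of $q^{2k}x^{n-k}$ is the same for each $k$, and Proposition~\ref{acomp} then identifies these coefficients (for $\overline{G}$ and for $H$ respectively) with the coefficients of $P_{A,\overline{\overline{G}}}(x) = P_{A,G}(x)$ and $P_{A,\overline{H}}(x)$. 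Hence $P_{A,G}(x) = P_{A,\overline{H}}(x)$, i.e.\ $G$ and $\overline{H}$ are $A$-cospectral.

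Since $G$ is spectrally determined for the adjacency matrix, this forces $\overline{H} \cong G$, and taking complements gives $H \cong \overline{G}$, as desired. Thus $\overline{G}$ is spectrally determined for $\Dq$.

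The one subtlety to be careful about — and the only real obstacle — is the applicability of Proposition~\ref{acomp} to $H$: that proposition requires diameter at most $2$, and a priori we only know $\diam(\overline{G}) \leq 2$. This is precisely why one first invokes Theorem~\ref{specinfo}(5) to transfer the diameter bound to $H$ before extracting the adjacency characteristic polynomial. Everything else is a direct chaining of previously established results, with no computation required beyond comparing coefficients of like monomials $q^{2k}x^{n-k}$.
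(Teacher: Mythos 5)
Your proof is correct and follows the same route as the paper, whose entire proof is the single line ``This follows from Proposition~\ref{acomp}.'' Your write-up supplies the details the paper leaves implicit, and in particular correctly identifies and resolves the one genuine subtlety: using Theorem~\ref{specinfo} to transfer the bound $\diam \leq 2$ to the putative cospectral mate $H$ so that Proposition~\ref{acomp} can legitimately be applied to both graphs.
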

\begin{proof}
    This follows from Proposition~\ref{acomp}
\end{proof}
\begin{thm}\label{complete}
    The exponential distance matrix of a graph $G$ has an eigenvalue in the form $(t-1)q+1$ (where $t$ is an integer) if and only if $G$ has $K_t$ as one of its components.
\end{thm}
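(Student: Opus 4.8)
The plan is to prove both directions separately. For the ``if'' direction, suppose $G$ has $K_t$ as a component. By Proposition~\ref{union}, $\spec(\Dq^G)$ is the union of the spectra of the exponential distance matrices of the components of $G$. By Proposition~\ref{kspecprop}, $\Dq^{K_t}$ has eigenvalue $q(t-1)+1$ (with multiplicity one). Hence $(t-1)q+1$ is an eigenvalue of $\Dq^G$, and this direction is essentially immediate.

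The ``only if'' direction is the substantive one. Suppose $(t-1)q+1$ is an eigenvalue of $\Dq^G$; I want to produce a $K_t$ component. The key idea is to track the eigenvalue as $q \to 1$: the function $\lambda(q) = (t-1)q+1$ satisfies $\lim_{q\to 1}\lambda(q) = t$, which is a nonzero integer. By Observation~\ref{component}, there is a component $G_i$ of $G$ on exactly $t$ vertices whose exponential distance matrix has $(t-1)q+1$ as an eigenvalue. So it suffices to show: if $H$ is a connected graph on $t$ vertices and $\Dq^H$ has $(t-1)q+1$ as an eigenvalue, then $H = K_t$. Here I would use a row-sum / Perron-type argument. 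For a connected graph $H$ on $t$ vertices, each row sum of $\Dq^H$ is $1 + \sum_{v\ne u} q^{\dist(u,v)}$; since each summand is a power $q^j$ with $j \ge 1$ and there are $t-1$ of them, for $q \in (0,1)$ every row sum is at most $1 + (t-1)q$, with equality in a given row if and only if that vertex is adjacent to all others. The matrix $\Dq^H$ is entrywise positive (as $H$ is connected and $q>0$), so by Perron--Frobenius its spectral radius equals $(t-1)q+1$ only when all row sums equal $(t-1)q+1$ (since the maximum row sum bounds the spectral radius from above and the minimum row sum bounds it from below), forcing every vertex adjacent to every other, i.e. $H = K_t$. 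I also need to rule out $(t-1)q+1$ being a non-Perron eigenvalue of $\Dq^H$ for some non-complete connected $H$ on $t$ vertices; but since $(t-1)q+1 \ge 1 + (t-1)q > $ every row sum of a non-complete $H$ (strictly, for at least one row, and $q\in(0,1)$), it exceeds the spectral radius and so cannot be an eigenvalue at all. One should treat $q$ as a formal variable but the identity of polynomials can be checked by specializing to any subinterval of $(0,1)$, so the analytic argument is legitimate.

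The main obstacle I anticipate is making the Perron--Frobenius comparison fully rigorous as a statement about $q$ ranging over an interval rather than a fixed real number: one must argue that if $(t-1)q+1$ is an eigenvalue of the $q$-parametrized matrix (an identity of polynomials in $q$), then it is an eigenvalue for each specialization $q \in (0,1)$, and then derive a contradiction for non-complete $H$ uniformly. A clean way is to fix any single value $q_0 \in (0,1)$, run the bound ``spectral radius $\le$ max row sum $\le 1+(t-1)q_0$ with equality iff complete,'' and conclude $H = K_t$ from that one specialization. One edge case to dispatch: $t = 1$ gives eigenvalue $1$, and the single-vertex component $K_1$ indeed has $\Dq = (1)$, so the statement holds trivially there; and $t=2$ gives eigenvalue $q+1$, which for a connected graph on $2$ vertices ($K_2$, with $\Dq = \left(\begin{smallmatrix}1 & q\\ q & 1\end{smallmatrix}\right)$) is exactly the Perron eigenvalue, consistent with the general argument.
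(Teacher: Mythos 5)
Your proof is correct and follows essentially the same route as the paper: the forward direction reduces to a $t$-vertex component via the $q\to 1$ limit and Observation~\ref{component}, then fixes a single $q\in(0,1)$ and runs a Perron--Frobenius extremal argument to force completeness (the paper inspects the maximal coordinate of the normalized Perron eigenvector where you compare row sums, but these amount to the same computation). The one spot to tighten is your parenthetical justification: the bare bounds $\min_i r_i\le\rho\le\max_i r_i$ do not by themselves force \emph{all} row sums to equal $(t-1)q+1$, so you should invoke the irreducible refinement (e.g., $\rho=\sum_i y_i r_i/\sum_i y_i$ for the positive left Perron vector $\vec y$, whence $\rho=\max_i r_i$ only if all $r_i$ are equal), which applies since $\Dq^H$ is entrywise positive for connected $H$ and $q\in(0,1)$.
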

\begin{proof}
    The reverse direction follows easily from Proposition~\ref{union} and Proposition~\ref{kspecprop}. The exponential distance matrix of $K_t$ has eigenvalue $(t-1)q+1$, so if $K_t$ is a component of $G$, the exponential distance matrix of $G$ will also have $(t-1)q+1$ as an eigenvalue.

    For the forward direction, notice that if $\lambda = (t-1)q+1$ is an eigenvalue of $\Dq^G$, $\lim_{q \to 1} \lambda = \lim_{q \to 1} (t-1)q+1 = t$. By Observation~\ref{component}, $G$ has a component with $t$ vertices, call it $G_j$. Furthermore, the exponential distance matrix of $G_j$, $\Dq^{G_j}$ has $(t-1)q+1$ as an eigenvalue.  We now can restrict ourselves to this component.
    
    For some $q$ with $0<q<1$ we have that $(t-1)q+1$ is the largest eigenvalue in size (since all other eigenvalues $\to 0$ as $q\to1$).  Now fix such a value of $q$.  By Perron-Frobenius we may asssume that the eigenvector for the eigenvalue corresponding to $(t-1)q+1$ has the form $\vec x = \langle x_1, \dotsc, x_t\rangle^T$ where $x_1=1$ is the largest entry and all other entries are positive.

    We have
    \begin{align*}
        \Dq^{G_j}
        \begin{pmatrix}
            x_1 \\
            \vdots \\
            x_t\\
        \end{pmatrix}
        = ((t-1)q+1)
        \begin{pmatrix}
            x_1 \\
            \vdots \\
            x_t\\
        \end{pmatrix}.
    \end{align*}
    Looking at the first coordinate on both sides we see
    \begin{align*}
        ((t-1)q+1) &= 1+q\sum_{\substack{x_i\\ \dist(x_i,x_1) = 1}}x_i+q^2\sum_{\substack{x_i\\ \dist(x_i,x_1) = 2}}x_i + \cdots \\
        &\leq 1+q\sum_{\substack{x_i\\ 2\leq i \leq t}}x_i\\
        &\leq (t-1)q+1.
    \end{align*}
    The first inequality from our assumptions that $0<q<1$ and the second inequality from our assumption that each $x_i$ is at most $1$. For this to hold, our two inequalities must be equalities.  This is only possible if the distance from $1$ to any other vertex is $1$ (so $1$ is adjacent to all vertices) \emph{and} if $\vec x=\mathbf1$.
    
    Since $\vec{x}=\mathbf{1}$ we can now repeat this same argument for any vertex in the component and conclude that any pairs of vertices are adjacent.  This shows that the component is a clique, as desired.
    %
\end{proof}

\begin{prop}\label{pathprop}
    Given the spectrum of the exponential distance matrix of a graph $G$, one can determine whether or not $G$ is a union of paths.
\end{prop}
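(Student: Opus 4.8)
The plan is to show that the $\Dq$-spectrum determines whether $G$ is a disjoint union of paths by combining the structural information already extracted in Theorem~\ref{specinfo} with a count of induced $P_3$'s. First I would use Theorem~\ref{specinfo} to read off from the spectrum the number of components $k$, the sizes $t_1, \dots, t_k$ of the components, and the number of edges $e(G)$. If $G$ were a union of paths $P_{t_1} \cup \cdots \cup P_{t_k}$, the number of edges would necessarily be $\sum_i (t_i - 1) = n - k$; so the first test is whether $e(G) = n - k$. A union of paths is exactly a forest (so $e(G) = n-k$) in which every component is a path, and a forest component is a path if and only if it has maximum degree at most $2$, equivalently no vertex of degree $3$.

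The key step is to detect the presence of a degree-$\geq 3$ vertex from the spectrum. For this I would use the coefficient of $q^4 x^{n-3}$ in $P_{\Dq,G}(x+1)$, which by Proposition~\ref{P3} counts (twice) the number of induced copies of $P_3$ in $G$. In a forest with $e(G) = n-k$, every path of length $2$ (i.e.\ every pair of edges sharing a vertex) is automatically an induced $P_3$, since a non-induced $P_3$ would close a triangle. Hence the number of induced $P_3$'s equals $\sum_{v} \binom{\deg(v)}{2}$. On the other hand, once the component sizes $t_i$ are fixed, a union of paths has a determined degree sequence and hence a determined value of $\sum_v \binom{\deg(v)}{2}$, namely $\sum_i 2(t_i - 2)^+ \cdot 1 = \sum_i (t_i - 2)$ for $t_i \geq 2$ (each internal vertex contributes $\binom{2}{2}=1$, endpoints contribute $0$). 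Among all forests with the prescribed component sizes, the union of paths is the unique one minimizing $\sum_v \binom{\deg(v)}{2}$: the sum of degrees is fixed at $2(n-k)$, and $\binom{\cdot}{2}$ is strictly convex, so the minimum over integer degree sequences realizable by a forest on each component is attained exactly when the degrees are as equal as possible, which for a tree forces the path (all internal degrees $2$, two leaves). Therefore $G$ is a union of paths if and only if $e(G) = n-k$ \emph{and} the induced-$P_3$ count (computed from the spectrum via Proposition~\ref{P3}) equals $\sum_{i : t_i \geq 2}(t_i - 2)$.

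The main obstacle is making the convexity/extremality argument fully rigorous: I must verify that among all degree sequences of a tree on $t$ vertices, the path strictly minimizes $\sum_v \binom{\deg v}{2}$, and that this minimization survives the passage to a forest with several fixed component sizes (which it does, since the objective and the constraint both split over components). I would carry this out by the standard "uncrossing" argument — if some vertex has degree $\geq 3$, then since a tree on $t \geq 3$ vertices that is not a path exists, one shows directly that replacing such a tree by the path strictly decreases $\sum_v \binom{\deg v}{2}$, because moving a unit of degree from a vertex of degree $d \geq 3$ to a leaf changes the sum by $\binom{d-1}{2} + \binom{2}{2} - \binom{d}{2} - \binom{1}{2} = 1 - (d-1) < 0$. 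Assembling these pieces: the spectrum gives $n$, $k$, the $t_i$, $e(G)$, and the induced-$P_3$ count, and the two numerical tests above decide membership in the family of unions of paths.
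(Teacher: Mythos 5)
Your proof is correct, but it takes a genuinely different route from the paper's. The paper peels components off one at a time: it reads the component sizes and the distance-pair counts $f(G,k)$ from the spectrum via Theorem~\ref{specinfo}, observes that the largest component, of size $t_1$, is a path if and only if $t_1=\diam(G)+1$ (the path being the only graph on $t_1$ vertices with diameter $t_1-1$), then subtracts the known distance profile $f(P_{t_1},k)=t_1-k$ of that path and recurses on what remains. Your argument instead gives a single closed-form criterion: $G$ is a union of paths if and only if $e(G)=n-k$ (which forces every component to be a tree) and the induced-$P_3$ count equals $\sum_{t_i\ge 2}(t_i-2)$. The heart of your argument is the convexity observation that among trees on $t$ vertices the path uniquely minimizes $\sum_v\binom{\deg v}{2}$, together with the fact that in a forest this quantity is exactly the induced-$P_3$ count because a cherry cannot close into a triangle; your exchange computation $\binom{d-1}{2}+\binom{2}{2}-\binom{d}{2}-\binom{1}{2}=2-d<0$ for $d\ge 3$ is the right way to make the minimality rigorous, since it operates on degree multisets directly and so avoids any realizability issues for intermediate sequences. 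What each approach buys: yours avoids the iterative peeling and the paper's slightly delicate step of identifying which component realizes the diameter, at the cost of the extremal lemma; the paper's recursion is the template reused in the follow-up theorem on unions of paths and cliques. One simplification available to you: Proposition~\ref{P3} is not actually needed, since in a forest the number of induced copies of $P_3$ equals the number of pairs of vertices at distance two, which is already item 3 of Theorem~\ref{specinfo}.
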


\begin{proof}
    Let $f(G,k)$ be defined as the number of pairs of vertices that are distance $k$ apart in a graph $G$. Notice that by Theorem~\ref{specinfo}, we can determine the value of $f(G,k)$ for any $k$ as well as the number of components of $G$ and the size of each component. Let $G_1$ be the largest component, say it has size $t_1$. Notice that $t_1 \geq \diam(G)+1$. If $t_1 > \diam(G)+1$, $G_1$ cannot be a path because there is no pair of vertices that are distance $(t_1 - 1)$ apart. If $t_1 = \diam(G)+1$, $G_1$ can be assumed to be a path because the path is the only graph on $t_1$ vertices with diameter $t_1-1$. 
    
    If $G_1$ is a path, we can remove it from the graph. Notice that for a path on $t_1$ vertices, $P_{t_1}$, $f(P_{t_1},k) = t_1-k$. Thus, $f(G\setminus G_1, k) =  f(G, k) - (t_1 - k)$. Now we can repeat the process by comparing the size of the next largest component $G_2$ with $\diam(G \setminus G_1)$. We can do this for each component of $G$ which will then determine whether or not $G$ is a union of paths.
\end{proof}

We can extend the previous result for unions of complete and path graphs. 

\begin{thm}
    Given the spectrum of the exponential distance matrix of a graph $G$, one can determine whether or not $G$ is a union of paths and complete graphs.
\end{thm}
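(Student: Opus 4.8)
The plan is to reduce the statement to Proposition~\ref{pathprop} by first stripping off the complete-graph components, which can be detected and counted purely from the spectrum. Recall that we regard $\spec(\Dq^G)$ as a multiset of expressions in the variable $q$, and that by Proposition~\ref{union} it is the multiset union of the spectra of the connected components of $G$. The two enabling facts are Theorem~\ref{complete}, which says exactly when $(t-1)q+1$ occurs as an eigenvalue, and Proposition~\ref{kspecprop}, which records the full spectrum of $\Dq^{K_t}$: the value $(t-1)q+1$ once, together with $1-q$ with multiplicity $t-1$.

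Concretely, I would proceed as follows. First, for each integer $t\ge 3$, let $c_t$ be the multiplicity of $(t-1)q+1$ in $\spec(\Dq^G)$, which we read off. I claim $c_t$ equals the number of components of $G$ isomorphic to $K_t$: each such component contributes one copy of $(t-1)q+1$ by Proposition~\ref{kspecprop}, while a connected component not isomorphic to $K_t$ contributes none, since otherwise Theorem~\ref{complete} applied to that component would force it to contain, and hence (being connected) to equal, $K_t$. The expressions $(t-1)q+1$ are pairwise distinct for distinct $t$, so different clique sizes do not interfere. Second, let $G'$ be $G$ with every component isomorphic to some $K_t$, $t\ge 3$, deleted; since $G$ is a genuine disjoint union of $G'$ together with $c_t$ copies of $K_t$ for each $t\ge 3$, Proposition~\ref{union} shows that $\spec(\Dq^{G'})$ is obtained from $\spec(\Dq^G)$ by removing, for each $t\ge 3$, exactly $c_t$ copies of $(t-1)q+1$ and $c_t(t-1)$ copies of $1-q$ --- a multiset difference determined by $\spec(\Dq^G)$. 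Third, apply Proposition~\ref{pathprop} to $G'$ to decide whether $G'$ is a union of paths. Finally, note that $G$ is a union of paths and complete graphs if and only if $G'$ is a union of paths: the deleted components $K_t$ ($t\ge3$) are complete, every component of $G'$ on at most two vertices is already a path, and the components of $G'$ on at least three vertices are precisely the non-complete components of $G$ on at least three vertices.

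The step I expect to need the most care is the multiset bookkeeping around the eigenvalue $1-q$. This value can appear in the spectrum of many components simultaneously (from every clique, and also, for instance, from connected twins as in the earlier results), so it is not obvious a priori how many of its copies "belong" to the cliques being removed. The resolution is that the decomposition of $G$ into $G'$ and the removed cliques is an honest disjoint-union decomposition, so by Proposition~\ref{union} the spectra decompose accordingly and the multiset difference written above is literally $\spec(\Dq^{G'})$, with no ambiguity. Once this point is secured, together with the multiplicity claim for $(t-1)q+1$, the remaining reasoning is routine and the reduction to Proposition~\ref{pathprop} finishes the proof.
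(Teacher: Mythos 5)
Your proposal is correct and follows essentially the same route as the paper: use Theorem~\ref{complete} to identify the clique components and their sizes, strip them off, and reduce to Proposition~\ref{pathprop}. The only difference is bookkeeping --- you subtract the cliques' contribution from the spectrum itself as a multiset (via Proposition~\ref{union} and Proposition~\ref{kspecprop}), whereas the paper subtracts their contribution from the derived distance-pair counts $f(G,k)$; your version has the minor advantage of handing Proposition~\ref{pathprop} an actual spectrum rather than derived data.
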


\begin{proof}
    By Theorem~\ref{complete}, we can determine the number of components that are cliques and the size of each clique. We can proceed by removing one clique at a time. If one of the components is $K_t$, we know removing it will remove $\frac{t(t-1)}{2}$ edges, and will not change the number of pairs of vertices that have distance greater than one between them. Thus, we know 
    \begin{align*}
        f(G\setminus K_t,k) = 
        \begin{cases}
          f(G,1) - \frac{t(t-1)}{2} & \text{if } k= 1,\\
          f(G,k) & \text{otherwise}.
        \end{cases}
    \end{align*}
    We can repeat this process for each component that is a clique. Once we have removed all the cliques, we are left with a subgraph $G^*$ for which we know $f(G^*,k)$, for each $k$ as well as the size and number of components. Thus, using Proposition~\ref{pathprop}, we can determine whether the remaining components are paths or not. 
\end{proof}

\section{Cospectrality}\label{sec:cospec}

A pair of graphs that have the same spectrum are called \emph{cospectral} graphs. In this section we will discuss non-isomorphic graphs that are cospectral for the exponential distance matrix for all values of $q$. By Theorem~\ref{adjacency}, any graphs that are cospectral for $\Dq$ are automatically cospectral for the adjacency matrix as well. Note that the converse does not hold; recall that the Saltire Pair, shown in Figure~\ref{fig: Saltire pair}, is $A$-cospectral, but not $\Dq$-cospectral. 

Cospectrality is an interesting question because it allows us to see the weaknesses of our matrix (what the spectrum does not preserve). We will see one example where the exponential distance matrix does not preserve the largest degree for all values of $q$ (see Figure~\ref{fig:Dq not D}). 

\begin{defn}
    Let $G$ and $H$ be graphs such that $u\in V(G)$ and $v \in V(H)$. Then the graph that is the result of \emph{vertex identification} of $G$ and $H$ with vertices $u,v$, denoted $G \vxi{u}{v} H$, is constructed by identifying the vertices $u$ and $v$ into a single vertex in the graph $G\cup H$. 
\end{defn}

\begin{thm} \label{glue}
    Let $G_1$ and $G_2$ be graphs with vertices $u_1 \in V(G_1)$ and $u_2 \in V(G_2)$. Construct $G_1^*$ by attaching a leaf to vertex $u_1$ and construct $G_2^*$ by attaching a leaf to vertex $u_2$, as shown in Figure~\ref{fig:G*}. Let $H$ be a graph with vertex $v \in V(H)$. If $G_1$ and $G_2$ are $\Dq$-cospectral and $G_1^*$ and $G_2^*$ are $\Dq$-cospectral, then the vertex identifications $G_1 \vxi{u_1}{v} H$ and $G_2 \vxi{u_2}{v} H$, shown in Figure~\ref{fig: vertex id}, are also $\Dq$-cospectral.
\end{thm}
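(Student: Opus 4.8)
The plan is to express the characteristic polynomials of the two vertex-identifications in terms of quantities we can already control, namely the characteristic polynomials of $G_1, G_2, G_1^*, G_2^*$ (which agree by hypothesis) together with data coming only from $H$ and the chosen vertex $v$. The natural tool is a determinant expansion that isolates the role of the identified vertex. Partition the exponential distance matrix of $G_i \vxi{u_i}{v} H$ so that the identified vertex comes first, then the remaining vertices of $G_i$, then the remaining vertices of $H$. Because a shortest path between a vertex of $G_i\setminus u_i$ and a vertex of $H\setminus v$ must pass through the identified vertex, the off-diagonal block between these two parts has rank structure governed by distances-to-$u_i$ within $G_i$ and distances-to-$v$ within $H$; in fact that block factors as an outer product of the vector $(q^{\dist_{G_i}(w,u_i)})_w$ with the vector $(q^{\dist_H(z,v)})_z$.

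First I would set up this block form and use the rank-one nature of the cross block together with the matrix determinant lemma (or Schur complement on the $H$-block) to write $P_{\Dq, G_i \vxi{u_i}{v} H}(x)$ as a linear combination of $P_{\Dq, G_i}(x)$, $P_{\Dq, H}(x)$, and a ``mixed'' term that is a product of a quantity depending only on $(G_i, u_i)$ and a quantity depending only on $(H,v)$. Concretely, I expect an identity of the shape
\begin{align*}
P_{\Dq, G_i \vxi{u_i}{v} H}(x) &= P_{\Dq, G_i}(x)\,\widehat{P}_{\Dq,H}(x) + \widehat{P}_{\Dq,G_i}(x)\,P_{\Dq,H}(x) - x\,\widehat{P}_{\Dq,G_i}(x)\,\widehat{P}_{\Dq,H}(x),
\end{align*}
where $\widehat{P}$ denotes an appropriate ``rooted'' characteristic polynomial (a bordered determinant recording the vector of powers of $q$ to distances from the root). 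The precise coefficients are a routine computation; what matters is that the only $G_i$-dependent inputs are $P_{\Dq,G_i}(x)$ and $\widehat{P}_{\Dq,G_i}(x)$.

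The second step is to show these two rooted invariants are determined by the cospectrality hypotheses. $P_{\Dq,G_1}=P_{\Dq,G_2}$ is given directly. For the rooted polynomial $\widehat{P}_{\Dq,G_i}$, the point of introducing $G_i^*$ (attach a leaf at $u_i$) is exactly that attaching a leaf performs a small, controlled modification of the bordered matrix: expanding $P_{\Dq,G_i^*}(x)$ along the new leaf's row and column, using that the leaf is at distance $1$ from $u_i$ and distance $1+\dist_{G_i}(w,u_i)$ from every other vertex $w$, yields $P_{\Dq,G_i^*}(x)$ as an explicit linear combination of $P_{\Dq,G_i}(x)$ and $\widehat{P}_{\Dq,G_i}(x)$ (with coefficients free of $i$). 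Hence $P_{\Dq,G_1}=P_{\Dq,G_2}$ and $P_{\Dq,G_1^*}=P_{\Dq,G_2^*}$ together force $\widehat{P}_{\Dq,G_1}=\widehat{P}_{\Dq,G_2}$. Plugging both equalities into the identity from the first step gives $P_{\Dq, G_1 \vxi{u_1}{v} H}(x) = P_{\Dq, G_2 \vxi{u_2}{v} H}(x)$, which is the claim.

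The main obstacle I anticipate is bookkeeping in the first step: correctly identifying the rooted polynomial $\widehat{P}$ so that it appears in \emph{both} the leaf-attachment identity and the vertex-identification identity with matching normalization, and verifying that the cross block really does collapse to a single outer product (this uses the cut-vertex property of the identified vertex, analogous to the argument in Proposition~\ref{Prop:cutvertexcharpoly}). Once the ``distance through the root factors'' observation is in hand, everything else is linear algebra: rank-one updates, cofactor expansion, and substitution. A secondary subtlety is making sure the argument is genuinely an identity of polynomials in $x$ (and rational in $q$), so that ``cospectral for all $q$'' is preserved rather than just cospectrality at a fixed $q$; but since every step is a polynomial/determinant identity valid entrywise, this is automatic.
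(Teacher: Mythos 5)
Your proposal matches the paper's proof in all essential respects: the same block decomposition around the identified cut vertex with rank-one cross blocks, the same identification of $P_{\Dq,G_i}(x)$ and a rooted polynomial (the paper uses $\det(xI-(\Dq^{G_i})_{[u_i]})$) as the only $G_i$-dependent inputs, and the same use of the leaf-attachment identity $\det(xI-\Dq^{G_i^*})=(x-1)\det(xI-\Dq^{G_i})-(qx)^2\det(xI-(\Dq^{G_i})_{[u_i]})$ to pin down the rooted invariant from the two cospectrality hypotheses. The only cosmetic difference is that you carry out the determinant expansion via a Schur complement/rank-one update while the paper uses row operations followed by a permutation-based splitting into three terms; this is the same computation.
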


\begin{figure}[!h]
    \centering
\begin{tikzpicture}
\draw[black, thick, fill = black!15!white] (-1,0) circle (1);
\node[vertex] (v) at (0,0) {};
\node at (.2,-.3) {$u_1$};
\node[vertex] (a1) at (1,0) {};
\draw[black, thick](v) -- (a1);
\node at (-1,0) {$G_1$};
\end{tikzpicture}
\hspace{20pt}
\begin{tikzpicture}
\draw[black, thick, fill = black!15!white] (-1,0) circle (1);
\node[vertex] at (0,0) {};
\node at (.2,-.3) {$u_2$};
\node at (-1,0) {$G_2$};
\node[vertex] (a1) at (1,0) {};
\draw[black, thick](v) -- (a1);
\end{tikzpicture}
\caption{$G_1^*, G_2^*$}
\label{fig:G*}
\end{figure}

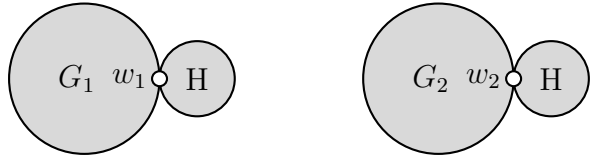
\begin{figure}[!h]
    \centering
\begin{tikzpicture}
\draw[black, thick, fill = black!15!white] (-1,0) circle (1);
\node at (-1.1,0) {$G_1$};
\draw[black, thick, fill = black!15!white](1/2,0) circle (1/2);
\node at (1/2,0) {H};
\node at (-.40,0) {$w_1$};
\node[vertex] at (0,0) {};
\end{tikzpicture}
\hspace{40pt}
\begin{tikzpicture}
\draw[black, thick, fill = black!15!white] (-1,0) circle (1);
\node at (-1.1,0) {$G_2$};
\draw[black, thick, fill = black!15!white](1/2,0) circle (1/2);
\node at (1/2,0) {H};
\node at (-.40,0) {$w_2$};
\node[vertex] at (0,0) {};
\end{tikzpicture}
\caption{Vertex identification of $G_1$ and $G_2$ with $H$, where $w_i$ is the merged vertex of $u_i, v$.}
\label{fig: vertex id}
\end{figure}

\begin{proof}
    Let $G_1 H$ and $G_2 H$ be shorthand for $G_1 \vxi{u_1}{v} H$ and $G_2 \vxi{u_2}{v} H$, respectively. Let $w_1$ be the result of the vertex-identification of $u_1$ and $v$ and $w_2$ be the result of $u_2$ and $v$. Notice that $\Dq^{G_1 H}$ can be written with the following block form
    \[
        \left(
        \begin{array}{c|c|c}
            (\Dq^{H})_{[v]} & \vec x & \vec x \vec y_1^T \\
            \hline
            \vec x^T & 1 & \vec y_1^T \\
            \hline
            \vec y_1 \vec x^T & \vec y_1 & (\Dq^{G_1})_{[u_1]}
        \end{array}
        \right),
    \]
    where $M_{[t]}$ denotes the matrix $M$ with the $t$-th row and column deleted. The middle column and row correspond to vertex $w_1$. The term in the upper right and bottom left blocks come from noting that the distance between vertices $p \in V(G_1)$ and $q \in V(H)$ is $\dist_{G_1}(p,u_1) + \dist_H(v,q)$.  Consider $\det(xI - \Dq^{G_1 H})$,
    \[\det(xI - \Dq^{G_1 H}) = 
        \det \left(
        \begin{array}{c|c|c}
            xI - (\Dq^{H})_{[v]} & -\vec x & - \vec x\vec y_1^T\\
            \hline
            -\vec x^T & x-1 & - \vec y_1^T\\
            \hline
            -\vec y_1 \vec x^T  & -\vec y_1 & xI - (\Dq^{G_1})_{[u_1]}
        \end{array}
        \right)\\.
    \]
    Since adding a multiple of one row or column to another preserves the determinant, we can add the appropriate multiples of the middle row and column to the first rows and columns to get blocks of $0$s in the upper right and lower left blocks. Thus,
    \[\det(xI - \Dq^{G_1 H}) = 
        \det\left(
        \begin{array}{c|c|c}
           S & -x{\cdot}{\vec x} & O\\
            \hline
            -x{\cdot} \vec x^T & x-1 & -\vec y_1^T \\
            \hline
            O & -{\vec y_1} & xI - (\Dq^{G_1})_{[u_1]}
        \end{array}
        \right)\\,
    \]
    where $S$ is the matrix that results from the aforementioned operations. The matrix $S$ is the result of entries that depend solely on $H$ and are thus independent of whether we are working with $G_1$ or $G_2$. 
    Similarly,
    \[
        M= 
        \left(
        \begin{array}{c|c}
            S & -x{\cdot}{\vec x} \\
            \hline
            -x{\cdot}\vec x^T & x-1 \\
        \end{array}
        \right)\\,
    \]
    is also independent of $G_1$. Notice that 
    \[
        xI - \Dq^{G_1} = 
        \left(
        \begin{array}{c|c}
            x-1 & -\vec y_1^T \\
            \hline
            -{\vec y_1} & xI - (\Dq^{G_1})_{[u_1]}
        \end{array}
        \right)\\.
    \]
    We know that the determinant can be expressed in terms of permutations, so consider the possible terms of $\det(xI- \Dq^{G_1 H})$. If the selection for the middle column is $x-1$, the only nonzero terms involve selections from $S$ and $xI - (\Dq^{G_1})_{[u_1]}$. If the selection for the middle column is from $-x{\cdot}{\vec x}$, the selection for the middle row must be from $-x{\cdot}\vec x^T$ if the term is nonzero. Similarly, the  remaining terms involve selections from $-\vec y_1^T$ and $-\vec y_1$ for the middle row and column. Thus, if we compensate for the double-counting of the terms involving $(x-1)$, we can see that
    \[
        \begin{split}
            \det(xI - \Dq^{G_1 H}) &= 
            \det\left(
            \begin{array}{c|c|c}
            S & -x{\cdot}{\vec x} & O\\
            \hline
            -x{\cdot}\vec x^T & x-1 & -\vec y_1^T \\
            \hline
            O & -{\vec y_1} & xI - (\Dq^{G_1})_{[u_1]}
            \end{array}
            \right)\\
            &=
            \det\left(
            \begin{array}{c|c}
            M &  O\\
            \hline
            O & xI - (\Dq^{G_1})_{[u_1]}
            \end{array}
            \right)\\
            &\phantom{{}={}} + 
            \det\left(
            \begin{array}{c|c}
            S & O \\
            \hline
            O & xI - \Dq^{G_1} \\
            \end{array}
            \right)\\
            &\phantom{{}={}} - 
            (x-1)\det(S)\det(xI-(\Dq^{G_1})_{[u_1]})\\
            &=\det(M)\det(xI-(\Dq^{G_1})_{[u_1]}) + \det(S)\det(xI - \Dq^{G_1})\\
            &\phantom{{}={}} - 
            (x-1)\det(S)\det(xI-(\Dq^{G_1})_{[u_1]}).
        \end{split}
    \]
    Now let us consider the characteristic polynomial of $G_2 H$. 
    Since $S$, $M$ depend solely on $\Dq^H$, we can repeat the previous calculations for $G_2 H$ to get 
    \begin{align*}
        \det(xI - \Dq^{G_2 H}) &=\det(M)\det(xI-(\Dq^{G_2})_{[u_2]}) + \det(S)\det(xI - \Dq^{G_2})\\
        &\phantom{{}={}} - 
        (x-1)\det(S)\det(xI-(\Dq^{G_2})_{[u_2]}).
    \end{align*}
    We know $G_1, G_2$ are $\Dq$-cospectral, so $\det(xI - \Dq^{G_1}) = \det(xI - \Dq^{G_2})$. Thus, the only term that could possibly differ between $\det(xI - \Dq^{G_1 H})$ and $\det(xI - \Dq^{G_2 H})$, are the terms involving $\det(xI-(\Dq^{G_1})_{[u_1]})$ and $\det(xI-(\Dq^{G_2})_{[u_2]})$. Showing these two values are equal is enough to complete the proof. 
    
    To see that this is the case, recall that we also assumed $G_1^*, G_2^*$ were $\Dq$-cospectral.  So using the argument as above, the following are equal:
    \begin{align*}
\det(xI - \Dq^{G_1^*}) &= (x-1)\det(xI - \Dq^{G_1}) - (qx)^2\det(xI - (\Dq^{G_1})_{[u_1]}),\text{ and}\\
\det(xI - \Dq^{G_2^*}) &= (x-1)\det(xI - \Dq^{G_2}) - (qx)^2\det(xI - (\Dq^{G_2})_{[u_2]}).
    \end{align*}
    Which is only possible if $\det(xI - (\Dq^{G_1})_{[u_1]})=\det(xI - (\Dq^{G_2})_{[u_2]})$, completing the proof.
\end{proof}

We comment that if $v\in V(G)$, then it is not always the case $(\Dq^G)_{[v]}=\Dq^{G\setminus v}$ since removing a vertex $v$ can impact the distances between other pairs of vertices when the shortest paths must pass through $v$.  Also, we note that in the statement of the previous theorem $G_1^*,G_2^*$ could have been replaced by the vertex identification of $G_1,G_2$ with any connected graph on two or more vertices.

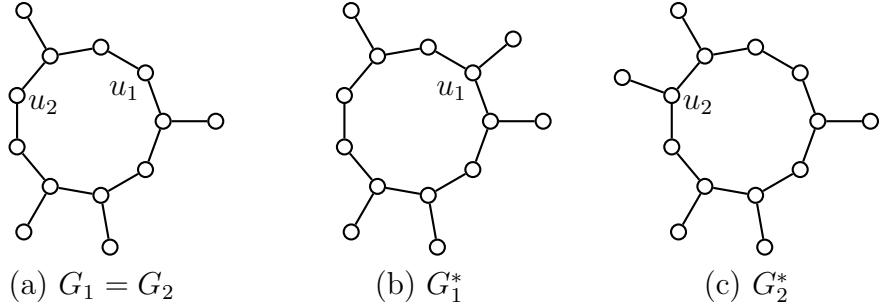
\begin{figure}[h!]
    \centering
\begin{tabular}{c@{\qquad}c@{\qquad}c}
\begin{tikzpicture}[scale=0.7]
\node[vertex] (a1) at (0:1.43) {};
\node[vertex] (a2) at (40:1.43) {};
\node[vertex] (a3) at (80:1.43) {};
\node[vertex] (a4) at (120:1.43) {};
\node[vertex] (a5) at (160:1.43) {};
\node[vertex] (a6) at (200:1.43) {};
\node[vertex] (a7) at (240:1.43) {};
\node[vertex] (a8) at (280:1.43) {};
\node[vertex] (a9) at (320:1.43) {};
\node[vertex] (b1) at (0:2.43) {};
\node[vertex] (b4) at (120:2.43) {};
\node[vertex] (b7) at (240:2.43) {};
\node[vertex] (b8) at (280:2.43) {};
\draw[thick, black] (a1) -- (a2) -- (a3) -- (a4) -- (a5) -- (a6) -- (a7) -- (a8) -- (a9) -- (a1) (b1) -- (a1) (b4) -- (a4) (b7) -- (a7) (b8) -- (a8);
\node at (40:0.9) {$u_1$};
\node at (160:0.9) {$u_2$};
\node[draw, thick,color=white,circle, inner sep = 2pt] at (160:2.43) {};
\end{tikzpicture}
&
\begin{tikzpicture}[scale=0.7]
\node[vertex] (a1) at (0:1.43) {};
\node[vertex] (a2) at (40:1.43) {};
\node[vertex] (a3) at (80:1.43) {};
\node[vertex] (a4) at (120:1.43) {};
\node[vertex] (a5) at (160:1.43) {};
\node[vertex] (a6) at (200:1.43) {};
\node[vertex] (a7) at (240:1.43) {};
\node[vertex] (a8) at (280:1.43) {};
\node[vertex] (a9) at (320:1.43) {};
\node[vertex] (b1) at (0:2.43) {};
\node[vertex] (b2) at (40:2.43) {};
\node[vertex] (b4) at (120:2.43) {};
\node[vertex] (b7) at (240:2.43) {};
\node[vertex] (b8) at (280:2.43) {};
\draw[thick, black] (a1) -- (a2) -- (a3) -- (a4) -- (a5) -- (a6) -- (a7) -- (a8) -- (a9) -- (a1) (b1) -- (a1) (b2) -- (a2) (b4) -- (a4) (b7) -- (a7) (b8) -- (a8);
\node at (40:0.9) {$u_1$};
\node[draw, thick,color=white,circle, inner sep = 2pt] at (160:2.43) {};

\end{tikzpicture}
&
\begin{tikzpicture}[scale=0.7]
\node[vertex] (a1) at (0:1.43) {};
\node[vertex] (a2) at (40:1.43) {};
\node[vertex] (a3) at (80:1.43) {};
\node[vertex] (a4) at (120:1.43) {};
\node[vertex] (a5) at (160:1.43) {};
\node[vertex] (a6) at (200:1.43) {};
\node[vertex] (a7) at (240:1.43) {};
\node[vertex] (a8) at (280:1.43) {};
\node[vertex] (a9) at (320:1.43) {};
\node[vertex] (b1) at (0:2.43) {};
\node[vertex] (b4) at (120:2.43) {};
\node[vertex] (b5) at (160:2.43) {};
\node[vertex] (b7) at (240:2.43) {};
\node[vertex] (b8) at (280:2.43) {};
\draw[thick, black] (a1) -- (a2) -- (a3) -- (a4) -- (a5) -- (a6) -- (a7) -- (a8) -- (a9) -- (a1) (b1) -- (a1) (b4) -- (a4) (b5) -- (a5) (b7) -- (a7) (b8) -- (a8);
\node at (160:0.9) {$u_2$};
\end{tikzpicture}\\
(a) $G_1=G_2$&
(b) $G_1^*$&
(c) $G_2^*$
\end{tabular}
\caption{Illustration of Theorem~\ref{glue}}
\label{fig:cyclic graphs}
\end{figure}

The technique from Theorem~\ref{glue} can be used to construct many infinite cospectral families. As a simple example, consider the graph in Figure~\ref{fig:cyclic graphs}. If we let both $G_1,G_2$ be the graph $G$ from part (a) in Figure~\ref{fig:cyclic graphs}, and $G_1^*, G_2^*$ be the graphs from parts (b) and (c) in Figure~\ref{fig:cyclic graphs}, then we can see that $G_1, G_2$ are isomorphic, as are $G_1^*, G_2^*$. Thus, by Theorem~\ref{glue} we can attach any graph $H$ to vertices $u_1, u_2$ to create a pair of non-isomorphic cospectral graphs. For example let $H$ be an arbitrary path, then the following graphs in Figure~\ref{fig:cyclepath} are a family of non-isomorphic cospectral graphs.

\begin{figure}[!h]
    \centering
\begin{tikzpicture}[scale=0.7]
\node[vertex] (a1) at (0:1.43) {};
\node[vertex] (a2) at (40:1.43) {};
\node[vertex] (a3) at (80:1.43) {};
\node[vertex] (a4) at (120:1.43) {};
\node[vertex] (a5) at (160:1.43) {};
\node[vertex] (a6) at (200:1.43) {};
\node[vertex] (a7) at (240:1.43) {};
\node[vertex] (a8) at (280:1.43) {};
\node[vertex] (a9) at (320:1.43) {};
\node[vertex] (b1) at (0:2.43) {};
\node[vertex] (b2) at (40:2.43) {};
\node[vertex] (b4) at (120:2.43) {};
\node[vertex] (b7) at (240:2.43) {};
\node[vertex] (b8) at (280:2.43) {};
\draw[thick, black] (a1) -- (a2) -- (a3) -- (a4) -- (a5) -- (a6) -- (a7) -- (a8) -- (a9) -- (a1) (b1) -- (a1) (b2) -- (a2) (b4) -- (a4) (b7) -- (a7) (b8) -- (a8);
\node at (40:0.9) {$u_1$};
\node[vertex](c2) at (40:4.43) {};
\draw[thick, dashed](b2) -- (c2);
\end{tikzpicture}
\hfil
\begin{tikzpicture}[scale=0.7]
\node[vertex] (a1) at (0:1.43) {};
\node[vertex] (a2) at (40:1.43) {};
\node[vertex] (a3) at (80:1.43) {};
\node[vertex] (a4) at (120:1.43) {};
\node[vertex] (a5) at (160:1.43) {};
\node[vertex] (a6) at (200:1.43) {};
\node[vertex] (a7) at (240:1.43) {};
\node[vertex] (a8) at (280:1.43) {};
\node[vertex] (a9) at (320:1.43) {};
\node[vertex] (b1) at (0:2.43) {};
\node[vertex] (b4) at (120:2.43) {};
\node[vertex] (b5) at (160:2.43) {};
\node[vertex] (b7) at (240:2.43) {};
\node[vertex] (b8) at (280:2.43) {};
\draw[thick, black] (a1) -- (a2) -- (a3) -- (a4) -- (a5) -- (a6) -- (a7) -- (a8) -- (a9) -- (a1) (b1) -- (a1) (b4) -- (a4) (b5) -- (a5) (b7) -- (a7) (b8) -- (a8);
\node at (160:0.9) {$u_2$};
\node[vertex](c5) at (160:4.43) {};
\draw[thick, dashed](c5) -- (b5);
\end{tikzpicture}
\caption{Vertex identification of $G$ with an arbitrary path}
\label{fig:cyclepath}

\end{figure}
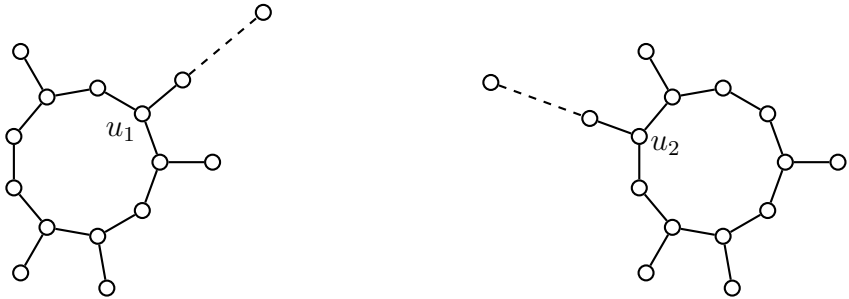
Theorem~\ref{glue} also allows us to prove a result about cospectrality for trees. McKay \cite{mckay} showed that almost all trees have a cospectral mate for the standard distance matrix. We are able to prove the same result for the exponential distance matrix. 
\begin{thm}
    Almost all trees have a $\Dq$-cospectral mate.
\end{thm}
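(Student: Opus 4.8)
The plan is to bootstrap the theorem from Theorem~\ref{glue} together with the classical fact (going back to Schwenk, and used by McKay \cite{mckay} in the distance-matrix setting) that any fixed rooted tree occurs as a pendant limb of almost every tree. So the real content reduces to producing \emph{one} pair of non-isomorphic rooted trees $(G_1,u_1)$ and $(G_2,u_2)$ for which $G_1$ and $G_2$ are $\Dq$-cospectral \emph{and} the trees $G_1^*,G_2^*$ obtained by attaching a leaf at $u_1$, respectively $u_2$, are $\Dq$-cospectral. Once such a ``gadget'' is in hand, Theorem~\ref{glue} tells us that for \emph{every} tree $H$ and every vertex $v\in V(H)$ the trees $G_1\vxi{u_1}{v}H$ and $G_2\vxi{u_2}{v}H$ are $\Dq$-cospectral.

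Next I would phrase ``containing a limb'' as a vertex identification. Suppose a tree $T$ contains $(G_1,u_1)$ as a pendant limb at an edge $e=wp$ of $T$ --- meaning the component of $T-e$ that contains $w$, rooted at $w$, is isomorphic to $(G_1,u_1)$. Let $H$ be the subtree of $T$ induced by $w$ together with the component of $T-e$ containing $p$, with distinguished vertex $v=w$. Then $T=G_1\vxi{u_1}{v}H$, so by the previous paragraph $T':=G_2\vxi{u_2}{v}H$ is $\Dq$-cospectral to $T$; moreover $T'$ is again a tree, necessarily on the same number of vertices as $T$ (the two sides $G_1,G_2$ being $\Dq$-cospectral, hence of equal order). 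Thus $T'$ is a candidate $\Dq$-cospectral mate for $T$.

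It remains to handle two ``almost all'' bookkeeping points, exactly as in \cite{mckay}. First, by Schwenk's limb theorem the fraction of $n$-vertex trees containing no pendant limb isomorphic to $(G_1,u_1)$ tends to $0$; for every other tree, fixing a canonical such limb (say the one whose attachment vertex comes first under a fixed labelling) makes the replacement $T\mapsto T'$ well defined. Second, one must rule out $T'\cong T$ for all but a vanishing fraction of these trees: since $(G_1,u_1)$ and $(G_2,u_2)$ are non-isomorphic rooted trees of the same order (and a pendant limb is always a proper subtree), the replacement genuinely alters the local structure at the chosen attachment vertex, and a standard fixed-point count --- essentially the one in Schwenk's and McKay's arguments --- shows the number of $n$-vertex trees invariant under the replacement is $o$ of the total. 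Combining these, almost all trees admit a genuine (non-isomorphic) $\Dq$-cospectral mate.

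Finally I would exhibit the gadget, which I expect to be the main obstacle. Taking $u_1,u_2$ to be \emph{leaves} of $G_1,G_2$ is convenient: a leaf lies on no shortest path between other vertices, so $(\Dq^{G_i})_{[u_i]}=\Dq^{G_i\setminus u_i}$, and the determinant identity in the proof of Theorem~\ref{glue} (equivalently, Proposition~\ref{Prop:cutvertexcharpoly}) shows that, given that $G_1$ and $G_2$ are $\Dq$-cospectral, the second condition is equivalent to $G_1\setminus u_1$ and $G_2\setminus u_2$ being $\Dq$-cospectral. So it suffices to find non-isomorphic trees $G_1,G_2$ with leaves $u_1,u_2$ such that $G_1$ and $G_2$ are $\Dq$-cospectral and $G_1\setminus u_1$ and $G_2\setminus u_2$ are $\Dq$-cospectral. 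Such a pair exists; the smallest instances can be located by a short computer search, and both cospectralities verified directly from the characteristic polynomials or via the recursions of Section~\ref{sec:ops}. This step is the delicate one precisely because $\Dq$-cospectrality is far more restrictive than $A$-cospectrality --- it also pins down the entire distance distribution, by Theorem~\ref{adjacency} and Lemma~\ref{distancelemma} --- so one cannot simply reuse Schwenk's adjacency gadget; by contrast, once the gadget is certified the limb machinery above is routine.
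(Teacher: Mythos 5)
Your overall strategy is the paper's: certify one ``gadget'' satisfying the hypotheses of Theorem~\ref{glue}, then invoke Schwenk's limb theorem to transfer the swap to almost every tree, and handle the bookkeeping (canonical choice of limb, ruling out $T'\cong T$) as in McKay. That part of your write-up is fine, and your side observation that for a leaf $u$ one has $(\Dq^G)_{[u]}=\Dq^{G\setminus u}$ (so the second hypothesis of Theorem~\ref{glue} reduces to cospectrality of the leaf-deleted trees) is correct. The genuine gap is that you never exhibit the gadget, and this is the entire content of the theorem: everything else is routine once the gadget exists. You defer it to ``a short computer search,'' but the object you propose to search for --- a pair of \emph{non-isomorphic} trees $G_1,G_2$ that are $\Dq$-cospectral, together with distinguished leaves $u_1,u_2$ whose deletion again yields $\Dq$-cospectral trees --- is an unnecessarily strong requirement whose existence is not at all obvious a priori. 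Indeed, $\Dq$-cospectrality forces $A$-cospectrality (Theorem~\ref{adjacency}) and the full distance distribution (Lemma~\ref{distancelemma}), so non-isomorphic $\Dq$-cospectral trees are already scarce; demanding in addition a compatible pair of leaves is a further constraint, and without producing an explicit pair the proof does not close. Asserting ``such a pair exists'' is circular-adjacent: the theorem you are proving is essentially the statement that such pairs are plentiful.

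The paper avoids this entirely by taking $G_1=G_2=T$ to be the \emph{same} $16$-vertex tree with two vertices $u_1\neq u_2$ lying in different orbits (Figure~\ref{fig:cospec trees 1}). Then the first hypothesis of Theorem~\ref{glue}, that $G_1$ and $G_2$ be $\Dq$-cospectral, holds trivially, and only a single computation is needed: that $T_1^*$ and $T_2^*$ (leaf attached at $u_1$ versus at $u_2$) are $\Dq$-cospectral though non-isomorphic. Your framework in the opening paragraph (``non-isomorphic \emph{rooted} trees'') does admit this degenerate choice, but your final paragraph narrows to non-isomorphic underlying trees and so excludes it. To repair your argument with minimal change, drop that restriction, take $G_1=G_2$ with two inequivalent roots, and exhibit (or at least cite) a concrete tree for which the one remaining cospectrality check succeeds --- the natural candidate being McKay's distance-matrix gadget, which the paper verifies computationally to work for $\Dq$ as well.
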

\begin{proof}
   Let $T_1, T_2$ both be the tree from Figure~\ref{fig:cospec trees 1}(a). Let $T_1^*, T_2^*$ be the trees from Figure~\ref{fig:cospec trees 1}(b) and~\ref{fig:cospec trees 1}(c), respectively. Again, $T_1,T_2$ are the same graph so they clearly have the same $\Dq$ characteristic polynomial. $T_1^*,T_2^*$ are also $\Dq$-cospectral, which is computationally confirmed. Thus, we can apply Theorem~\ref{glue}, with $H$ being any tree. The proportion of trees on $n$ vertices which contain $T$ glued at $u_1$ tends toward $1$ as $n$ becomes large, these can be swapped out for a copy of $T$ glued at $u_2$ (see \cite{schwenk}). In particular, most trees will have a non-isomorphic cospectral mate. 
\end{proof}

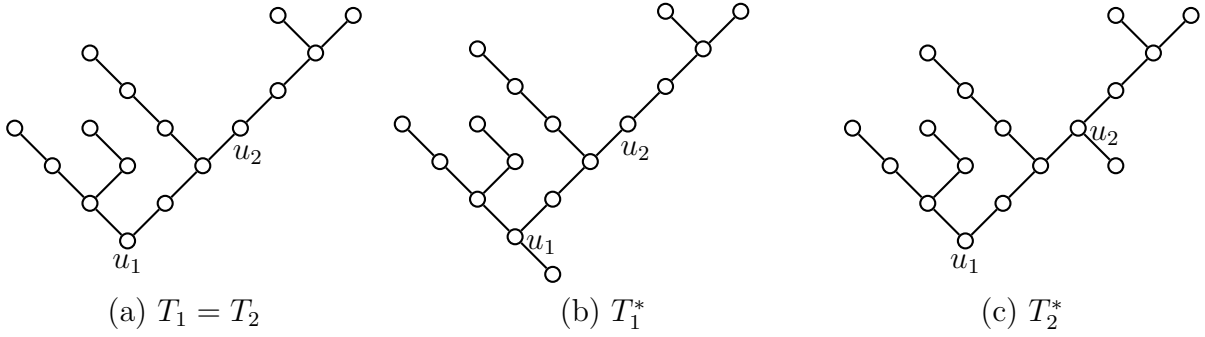
\begin{figure}[!h]
    \centering
    \begin{tabular}{ccc}
    \begin{tikzpicture}[scale = 0.5, rotate=0]
\node[vertex] (4) at (0,0) {};
\node at (0,-.6) {$u_1$};
\node[vertex] (3) at (-1,1) {};
\draw[black, thick] (3) -- (4);
\node[vertex] (5) at (1,1) {};
\draw[black, thick] (4) -- (5);
\node[vertex](2) at (-2,2) {};
\node[vertex] (1) at (-3,3) {};
\draw[black, thick] (3) -- (2);
\draw[black, thick] (2) -- (1);
\node[vertex] (11) at (0,2) {};
\node[vertex] (12) at (-1,3) {};
\draw[thick, black] (3) -- (11);
\draw[black, thick] (11) -- (12);
\node[vertex] (6) at (2,2) {};
\draw[black, thick] (5) -- (6);
\node[vertex] (7) at (3,3) {};
\node at (3.2,2.3) {$u_2$};
\draw[black, thick] (6) -- (7);
\node[vertex] (8) at (4,4) {};
\draw[thick, black] (7) -- (8);
\node[vertex] (9) at (5,5) {};
\draw[thick, black] (8) -- (9);
\node[vertex] (10) at (6,6) {};
\draw[black, thick] (9) -- (10);
\node[vertex] (16) at (4,6) {};
\draw[thick, black](9) -- (16);
\node[vertex] (13) at (1,3) {};
\draw[black, thick] (6) -- (13);
\node[vertex] (14) at (0,4) {};
\draw[black, thick] (13) -- (14);
\node[vertex] (15) at (-1, 5) {};
\draw[thick, black] (14) -- (15);
\end{tikzpicture}
&
\begin{tikzpicture}[scale = 0.5,rotate=0]
\node[vertex] (4) at (0,0) {};
\node at (0.7,-.2) {$u_1$};
\node[vertex] (3) at (-1,1) {};
\draw[black, thick] (3) -- (4);
\node[vertex] (5) at (1,1) {};
\draw[black, thick] (4) -- (5);
\node[vertex](2) at (-2,2) {};
\node[vertex] (1) at (-3,3) {};
\draw[black, thick] (3) -- (2);
\draw[black, thick] (2) -- (1);
\node[vertex] (11) at (0,2) {};
\node[vertex] (12) at (-1,3) {};
\draw[thick, black] (3) -- (11);
\draw[black, thick] (11) -- (12);
\node[vertex] (6) at (2,2) {};
\draw[black, thick] (5) -- (6);
\node[vertex] (7) at (3,3) {};
\node at (3.2,2.3) {$u_2$};
\draw[black, thick] (6) -- (7);
\node[vertex] (8) at (4,4) {};
\draw[thick, black] (7) -- (8);
\node[vertex] (9) at (5,5) {};
\draw[thick, black] (8) -- (9);
\node[vertex] (10) at (6,6) {};
\draw[black, thick] (9) -- (10);
\node[vertex] (16) at (4,6) {};
\draw[thick, black](9) -- (16);
\node[vertex] (13) at (1,3) {};
\draw[black, thick] (6) -- (13);
\node[vertex] (14) at (0,4) {};
\draw[black, thick] (13) -- (14);
\node[vertex] (15) at (-1, 5) {};
\draw[thick, black] (14) -- (15);
\node[vertex] (16) at (1,-1) {};
\draw[thick, black] (4) -- (16);
\end{tikzpicture}
\hspace{20pt}
&
\begin{tikzpicture}[scale = 0.5, rotate = 0]
\node[vertex] (4) at (0,0) {};
\node at (0,-.6) {$u_1$};
\node[vertex] (3) at (-1,1) {};
\draw[black, thick] (3) -- (4);
\node[vertex] (5) at (1,1) {};
\draw[black, thick] (4) -- (5);
\node[vertex](2) at (-2,2) {};
\node[vertex] (1) at (-3,3) {};
\draw[black, thick] (3) -- (2);
\draw[black, thick] (2) -- (1);
\node[vertex] (11) at (0,2) {};
\node[vertex] (12) at (-1,3) {};
\draw[thick, black] (3) -- (11);
\draw[black, thick] (11) -- (12);
\node[vertex] (6) at (2,2) {};
\draw[black, thick] (5) -- (6);
\node[vertex] (7) at (3,3) {};
\node at (3.7,2.8) {$u_2$};
\draw[black, thick] (6) -- (7);
\node[vertex] (8) at (4,4) {};
\draw[thick, black] (7) -- (8);
\node[vertex] (9) at (5,5) {};
\draw[thick, black] (8) -- (9);
\node[vertex] (10) at (6,6) {};
\draw[black, thick] (9) -- (10);
\node[vertex] (16) at (4,6) {};
\draw[thick, black](9) -- (16);
\node[vertex] (13) at (1,3) {};
\draw[black, thick] (6) -- (13);
\node[vertex] (14) at (0,4) {};
\draw[black, thick] (13) -- (14);
\node[vertex] (15) at (-1, 5) {};
\draw[thick, black] (14) -- (15);
\node[vertex] (17) at (4,2) {};
\draw[black, thick] (7) -- (17);
\end{tikzpicture}\\
(a) $T_1 = T_2$ & (b) $T_1^*$ & (c) $T_2^*$
     \end{tabular}
      \caption{Constructing cospectral trees, taken from \cite{mckay}}
      \label{fig:cospec trees 1}
\end{figure}

\subsection{Exponential Distance Switching}


Following the cospectral construction for the distance matrix from Heysse \cite{heysse}, we can create a similar construction for the exponential distance matrix. However, unlike the result from Heysse \cite{heysse}, we must restrict to graphs with diameter two. Suppose a graph $G$ has one of the graphs in Figure~\ref{fig:switching subgraphs} as an induced subgraph. 
\begin{figure}[!h]
    \centering
    \begin{tikzpicture}
\node[vertex] (g1) at (0,0) {};
\node at (-.33,0) {$g_1$};
\node[vertex] (h1) at (2.25,0) {};
\node at (2.58,0) {$h_1$};
\node[vertex] (g2) at (0,-2.25) {};
\node at (-.33,-2.25) {$g_2$};
\node[vertex] (h2) at (2.25,-2.25) {};
\node at (2.58,-2.25) {$h_2$};
\node[vertex] (s) at (1,-3) {};
\node at (1,-3.3) {$S$};
\draw[black, thick](g1) -- (s)--(g2)--(g1)--(h1)--(h2)--(g2)--(h1);
\draw[black, thick](g1) -- (h2);
\end{tikzpicture}
\hfill
\begin{tikzpicture}
\node[vertex] (g1) at (0,0) {};
\node at (-.33,0) {$g_1$};
\node[vertex] (h1) at (2.25,0) {};
\node at (2.58,0) {$h_1$};
\node[vertex] (g2) at (0,-2.25) {};
\node at (-.33,-2.25) {$g_2$};
\node[vertex] (h2) at (2.25,-2.25) {};
\node at (2.58,-2.25) {$h_2$};
\node[vertex] (s) at (1,-3) {};
\node at (1,-3.3) {$S$};
\draw[black, thick](g1) -- (s)--(g2)--(g1)--(h1)--(h2)--(g2);
\end{tikzpicture}
\hfill
\begin{tikzpicture}
\node[vertex] (g1) at (0,0) {};
\node at (-.33,0) {$g_1$};
\node[vertex] (h1) at (2.25,0) {};
\node at (2.58,0) {$h_1$};
\node[vertex] (g2) at (2.25,-2.25) {};
\node at (2.58,-2.25) {$g_2$};
\node[vertex] (h2) at (0,-2.25) {};
\node at (-.33,-2.25) {$h_2$};
\node[vertex] (s) at (1,-3) {};
\node at (1,-3.3) {$S$};
\draw[black, thick](g1) -- (s)--(g2)--(h1)--(g1)--(h2)--(g2);
\end{tikzpicture}
\caption{Subgraph switching candidates from Heysse \cite{heysse}.}
\label{fig:switching subgraphs}
\end{figure}
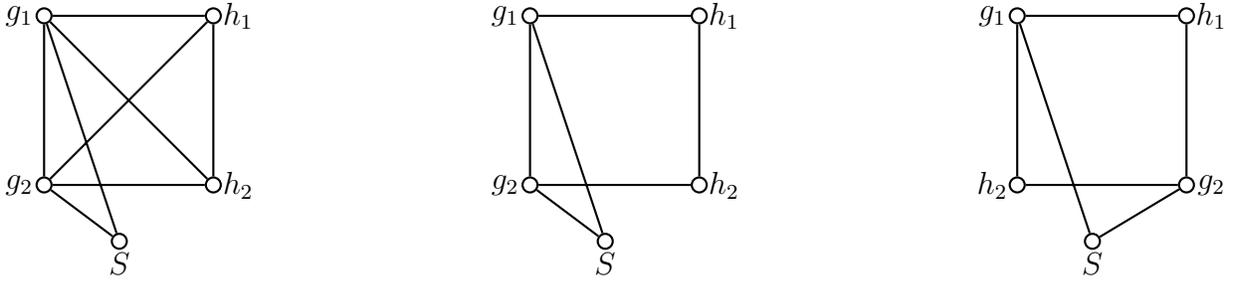

Let $S$ be the set of vertices $\{s\mid s \sim g_1, g_2 \text{ and } s\not \sim h_1, h_2\}$.  The set $S$ is not restricted to being a single vertex; a simple example involves creating twin copies of the vertex labeled $S$ in Figure~\ref{fig:switching subgraphs}. The vertices within $S$ can be arbitrarily connected with each other.  

Further, suppose we can partition the vertices in $V(G) \setminus \{g_1,g_2, h_1,h_2\}$ into two sets, $A$ and $B$, with $v \in A$ if and only if
\begin{align*}
    \dist_G(v,g_1) + \dist_G(v,g_2) - \dist_G(v,h_1) - \dist_G(v,h_2) = -2. 
\end{align*}
Since we have restricted the diameter of $G$ to be two, it follows that 
\begin{align*}
     \dist_G(v,g_1) = \dist_G(v,g_2) = 1 \text{ and } \dist_G(v,h_1) = \dist_G(v,h_2) = 2.
\end{align*}
And we have $v \in B$ if and only if
\begin{align*}
     \dist_G(v,g_1) + \dist_G(v,g_2) - \dist_G(v,h_1) - \dist_G(v,h_2) = 0. 
\end{align*}
Notice that $S \subseteq A$. From our graph $G$, define a graph $H$ such that 
\begin{align*}
    V(H) = V(G), \,E(H) = E(G) \setminus \{\{s,g_i\}\mid s\in S, i \in \{1,2\}\} \cup \{\{s,h_i\}\mid s\in S, i \in \{1,2\}\}.
\end{align*}
\begin{thm}
    Suppose $G$ and $H$ are graphs satisfying the conditions above.  Moreover, if  for all $v \in B$, $\dist_H(v,u) = \dist_G(v,u)$ for all $u \in V$ and if for all $w \in A$, $\dist_H(w,u) = \dist_G(w,u)$ for all $u \in V(G) \setminus \{g_1,g_2, h_1,h_2\}$ and 
    \begin{align*}
        \dist_H(w,g_i) = \dist_G(w,g_i) + 1 \textbf{ and } \dist_H(w,h_i) = \dist_G(w,h_i) -1
    \end{align*}
    for $i \in {1,2}$, then $G$ and $H$ are exponential distance cospectral. 
\end{thm}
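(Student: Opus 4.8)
The plan is to show that $\Dq^G$ and $\Dq^H$ are similar via an explicit orthogonal involution, which immediately gives cospectrality. Let $\mathbf{p}\in\mathbb{R}^{V(G)}$ be the vector that is $+1$ on $g_1,g_2$, is $-1$ on $h_1,h_2$, and is $0$ on every other vertex, so that $\mathbf{p}^T\mathbf{p}=4$, and let $\mathbf{a}$ be the indicator vector of $A$. The candidate matrix is the Householder reflection
\[
Q \;=\; I-\tfrac{2}{\mathbf{p}^T\mathbf{p}}\,\mathbf{p}\mathbf{p}^T \;=\; I-\tfrac12\,\mathbf{p}\mathbf{p}^T,
\]
which is orthogonal, symmetric, and satisfies $Q^2=I$; it fixes $\mathbf{p}^{\perp}$ pointwise and sends $\mathbf{p}\mapsto-\mathbf{p}$. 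The goal is to verify $Q\,\Dq^G\,Q=\Dq^H$, from which $\spec(\Dq^G)=\spec(\Dq^H)$ follows.

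First I would compute $\Dq^H-\Dq^G$ entrywise. Since $\diam(G)\le 2$, a vertex $w\in A$ has $\dist_G(w,g_i)=1$ and $\dist_G(w,h_i)=2$, so the corresponding entries of $\Dq^G$ are $q$ and $q^2$; by the stated distance changes these become $q^{\dist_G(w,g_i)+1}=q^2$ and $q^{\dist_G(w,h_i)-1}=q$ in $H$. All remaining entries are unchanged: the $B$-hypothesis handles every entry in a row or column indexed by $B$, the $A$-hypothesis handles the $A$-to-$A$ and $A$-to-$B$ entries, and the switch alters neither the induced subgraph on $\{g_1,g_2,h_1,h_2\}$ nor the distances among those four vertices — this last point is a short case check against the three configurations in Figure~\ref{fig:switching subgraphs}. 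Collecting terms yields $\Dq^H-\Dq^G=(q^2-q)\bigl(\mathbf{p}\mathbf{a}^T+\mathbf{a}\mathbf{p}^T\bigr)$.

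Next I would compute $\Dq^G\mathbf{p}$, whose entry at a vertex $x$ is $q^{\dist_G(x,g_1)}+q^{\dist_G(x,g_2)}-q^{\dist_G(x,h_1)}-q^{\dist_G(x,h_2)}$. For $x\in B$ this vanishes: the balance condition $\dist_G(x,g_1)+\dist_G(x,g_2)=\dist_G(x,h_1)+\dist_G(x,h_2)$ together with all four distances lying in $\{1,2\}$ forces the multisets $\{\dist_G(x,g_1),\dist_G(x,g_2)\}$ and $\{\dist_G(x,h_1),\dist_G(x,h_2)\}$ to coincide. For $x\in A$ it equals $q+q-q^2-q^2=2(q-q^2)$, a constant. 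For $x\in\{g_1,g_2,h_1,h_2\}$ the values are $\gamma,\gamma,-\gamma,-\gamma$ for some scalar $\gamma$ depending only on the local configuration — equivalently, the restriction of $\mathbf{p}$ to those four coordinates is an eigenvector of the associated principal $4\times 4$ submatrix of $\Dq^G$ — which I would again verify by inspecting each of the three subgraphs in Figure~\ref{fig:switching subgraphs} (for instance $\gamma=1-q$ when the four vertices induce a $K_4$). Hence $\Dq^G\mathbf{p}=\gamma\mathbf{p}+2(q-q^2)\mathbf{a}$, and since $\mathbf{a}\perp\mathbf{p}$ we also get $\mathbf{p}^T\Dq^G\mathbf{p}=4\gamma$.

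Finally I would substitute into
\[
Q\,\Dq^G\,Q=\Dq^G-\tfrac12\mathbf{p}\bigl(\Dq^G\mathbf{p}\bigr)^T-\tfrac12\bigl(\Dq^G\mathbf{p}\bigr)\mathbf{p}^T+\tfrac14\bigl(\mathbf{p}^T\Dq^G\mathbf{p}\bigr)\mathbf{p}\mathbf{p}^T .
\]
The three $\gamma\mathbf{p}\mathbf{p}^T$ contributions cancel, leaving $Q\,\Dq^G\,Q=\Dq^G-(q-q^2)\bigl(\mathbf{p}\mathbf{a}^T+\mathbf{a}\mathbf{p}^T\bigr)=\Dq^H$ by the first step. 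As $Q$ is an orthogonal involution, $\Dq^G$ and $\Dq^H$ are similar and hence cospectral. I expect the only genuinely delicate points to be the two small verifications tied to Figure~\ref{fig:switching subgraphs} — that the switch preserves the distances among $g_1,g_2,h_1,h_2$, and that $\mathbf{p}$ restricted to those vertices is an eigenvector of the relevant $4\times 4$ block — since that is exactly where the diameter-two restriction and the specific list of admissible subgraphs are used; the rest is forced linear algebra once $\Dq^H-\Dq^G$ and $\Dq^G\mathbf{p}$ are in hand.
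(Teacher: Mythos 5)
Your proposal is correct, and it takes a genuinely different route from the paper. The paper follows Heysse's template: given an eigenpair $(\lambda,\vec x)$ of $\Dq^G$ with $\lambda\ne -k$, it perturbs $\vec x$ by a vector $\Delta$ supported on $\{g_1,g_2,h_1,h_2\}$ (with a denominator $k+\lambda$) to get an eigenvector of $\Dq^H$, and then recovers the multiplicity of the exceptional eigenvalue $-k$ by a counting argument at the end. Your Householder conjugation $Q\,\Dq^G\,Q=\Dq^H$ packages the same mechanism as a single similarity: indeed $Q\vec x=\vec x-\tfrac12(\mathbf p^T\vec x)\mathbf p$, and one can check (using the identity $(\lambda+k)(x_{g_1}+x_{g_2}-x_{h_1}-x_{h_2})=(2q-2q^2)\sum_{j\in A}x_j$ that underlies the paper's computation) that this is exactly the paper's perturbed vector when $\lambda\ne-k$. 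What your version buys is uniformity: there is no excluded eigenvalue, no division by $k+\lambda$, and no separate multiplicity argument at the end, since similarity by an orthogonal involution gives cospectrality with multiplicities in one stroke. Your constant $\gamma$ is precisely $-k$ in the paper's notation ($1-q$, $1-q^2$, $(1-q)^2$ for the three configurations), which is a good consistency check. The two verifications you flag as delicate, that the switch leaves the distances among $g_1,g_2,h_1,h_2$ unchanged and that $(1,1,-1,-1)^T$ is an eigenvector of the $4\times4$ block, are genuine obligations not literally contained in the stated hypotheses, but the paper's sketch relies on them equally (they are what make $c(g_i)=-k$ and $c(h_i)=k$ work out), and both checks do go through for each of the three admissible configurations.
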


\begin{proof}[Sketch of proof based on Heysse \cite{heysse}]
    We  show how to perturb an eigenvector for the graph $G$ to produce an eigenvector for the graph $H$ with the same eigenvalue.
    
    So suppose $(\lambda,  \vec{x})$ is an eigenpair of $\Dq^G$ and $\lambda \ne -k$ (where $k = q-1$,  $k=q^2-1$, or $k=-(q^2-2q+1)$ for the first, second, and third graphs shown in  Figure~\ref{fig:switching subgraphs}, respectively). 
    We construct $\vec{y} \coloneqq \vec{x} + \Delta$ where
    \[
    \Delta_i= 
        \begin{cases} 
        0 & \text{if } i \not \in \{ g_1, g_2, h_1, h_2\},\\
         \frac{(q^2-q)\sum_{j\in A} x_j}{k+\lambda} & \text{if }i\in \{g_1, g_2\},\\
        -\frac{(q^2-q)\sum_{j\in A} x_j}{k+\lambda} & \text{if }i\in \{h_1, h_2\}.
        \end{cases}
    \]
    
    We have $(\lambda,\vec{y})$ is an eigenpair for  $\Dq^H$.  This is verified in a manner similar to \cite{heysse}, where the term $c(v)$ which is used in \cite{heysse} when verifying that the new vector is an eigenvector now becomes
    \[
    c(v) = q^{\dist_G(v,g_1)} + q^{\dist_G(v,g_2)} - q^{\dist_H(v,h_1)} - q^{\dist_H(v,h_2)}.
    \]
    This perturbation preserves dimension of the eigenspaces for $\lambda\ne -k$ since this process can be reversed.  Finally, we note that since we know that the eigenvalues (including multiplicity) which are not $-k$ are the same for both graphs, then we can conclude that the multiplicity of $-k$ as an eigenvalue is the same for both graphs.  So $G$ and $H$ are $\Dq$-cospectral.
\end{proof}

On seven vertices, there are eleven pairs of cospectral graphs for $\mathcal{D}$ and eleven pairs for $\Dq$. Ten pairs have diameter two and satisfying the switching requirements and so are cospectral for both.  The remaining pairs are shown in Figure~\ref{fig:Dq not D} and Figure~\ref{fig:D but not Dq}. Notice that both of these pairs have diameter three.

\begin{figure}[htb]
    \centering
\begin{tikzpicture}
\node[vertex](1) at (0,0) {};
\node[vertex](2) at (1,0) {};
\node[vertex](3) at (0,-1) {};
\node[vertex](4) at (1,-1) {};
\node[vertex](5) at (2,-1) {};
\node[vertex](6) at (1,-2) {};
\node[vertex](7) at (2,-2) {};
\draw[thick](1) -- (2) (1) -- (3) (2) -- (4) (1) -- (4) (3) -- (4) -- (5) (4) -- (6) (5) -- (7);
\end{tikzpicture}
\hfil
\begin{tikzpicture}
\node[vertex](1) at (0,0) {};
\node[vertex](2) at (1,0) {};
\node[vertex](3) at (-1,-1) {};
\node[vertex](4) at (0,-1) {};
\node[vertex](5) at (1,-1) {};
\node[vertex](6) at (0,-2) {};
\node[vertex](7) at (1,-2) {};
\draw[thick](1) -- (2) (1) -- (4) (2) -- (5) (1) -- (5) (3) -- (4) -- (5) (4) -- (6) (5) -- (7);
\end{tikzpicture}
    \caption{$\Dq$-cospectral, but not $D$-cospectral.}
    \label{fig:Dq not D}

\begin{tikzpicture}
\node[vertex](1) at (0,0) {};
\node[vertex](2) at (1,0) {};
\node[vertex](3) at (2,0) {};
\node[vertex](4) at (0,-1) {};
\node[vertex](5) at (1,-1) {};
\node[vertex](6) at (2,-1) {};
\node[vertex](7) at (0.5,-2) {};
\draw[thick](1) -- (4) (1) -- (5) (1) -- (6) (2) -- (4) (2) -- (5) (2) -- (6) (3) -- (5) (3) -- (6) (4) -- (5) (4) -- (7) (5) -- (7);
\end{tikzpicture}
\hfil
\begin{tikzpicture}
\node[vertex](1) at (0,0) {};
\node[vertex](2) at (1,0) {};
\node[vertex](3) at (2,0) {};
\node[vertex](4) at (0,-1) {};
\node[vertex](5) at (1,-1) {};
\node[vertex](6) at (2,-1) {};
\node[vertex](7) at (0.5,-2) {};
\draw[thick](1) -- (2) -- (3) (1) -- (4) (1) -- (6) (2) -- (5) (3) -- (4) (3) -- (5) (3) -- (6) (4) -- (5) (4) -- (7) (5) -- (7);
\end{tikzpicture}
    \caption{$D$-cospectral, but not $\Dq$-cospectral.}
    \label{fig:D but not Dq}
\end{figure}
\subsection{A Cospectral Unicyclic Family}

\begin{figure}[htb]
\begin{center}
\begin{tabular}{c}
\begin{tikzpicture}
\node[vertex] (a1) at (0,0) {};
\node[vertex] (a2) at (1,0) {};
\node[vertex] (a3) at (0,1) {};
\node[vertex] (a4) at (1,1) {};
\node[vertex] (a5) at (-1,0) {};

\node[vertex] (b1) at (2, 1) {};
\node[above] at (3,1.25) {\small $k{+}1$ vertices};
\draw[thick,dotted,rounded corners] (1.75,0.75) rectangle (4.25,1.25);
\node[vertex] (b3) at (4,1) {};

\node[vertex] (c1) at (2, 0) {};
\node[below] at (3,-0.25) {\small $k{+}3$ vertices};
\draw[thick,dotted,rounded corners] (1.75,-0.25) rectangle (4.25,0.25);
\node[vertex] (c3) at (4,0) {};

\node[vertex] (d1) at (1, -1) {};
\node[rotate = 90] at (0.4,-2) {\small $k{+}2$ vertices};
\draw[thick,dotted,rounded corners] (0.75,-0.75) rectangle (1.25,-3.25);
\node[vertex] (d3) at (1,-3) {};

\draw[thick] 
(a5)--(a1)--(a2)--(a4)--(a3)--(a1)
(d1)--(a2)--(c1)
(a4)--(b1)
;

\draw[thick,dashed]
(b1)--(b3)
(c1)--(c3)
(d1)--(d3);
\end{tikzpicture}\\
$(a)$ The family of graphs $G_1$.
\end{tabular}
\begin{tabular}{c}
\begin{tikzpicture}
\node[vertex] (a1) at (0,0) {};
\node[vertex] (a2) at (1,0) {};
\node[vertex] (a3) at (0,1) {};
\node[vertex] (a4) at (1,1) {};
\node[vertex] (a5) at (-1,0) {};
\node[vertex] (a6) at (-2,0) {};

\node[vertex] (b1) at (2, 1) {};
\node[above] at (3,1.25) {\small $k{+}1$ vertices};
\draw[thick,dotted,rounded corners] (1.75,0.75) rectangle (4.25,1.25);
\node[vertex] (b3) at (4,1) {};

\node[vertex] (c1) at (2, 0) {};
\node[below] at (3,-.25) {\small $k{+}2$ vertices};
\draw[thick,dotted,rounded corners] (1.75,-0.25) rectangle (4.25,0.25);
\node[vertex] (c3) at (4,0) {};

\node[vertex] (d1) at (0, -1) {};
\node[rotate = 90] at (-0.6,-2) {\small $k{+}2$ vertices};
\draw[thick,dotted,rounded corners] (0.25,-0.75) rectangle (-0.25,-3.25);
\node[vertex] (d3) at (0,-3) {};

\draw[thick] 
(a5)--(a1)--(a2)--(a4)--(a3)--(a1)--(d1)
(a2)--(c1)
(a4)--(b1)
(a6)--(a5);

\draw[thick,dashed]
(b1)--(b3)
(c1)--(c3)
(d1)--(d3);
\end{tikzpicture}\\
$(b)$ The family of graphs $G_2$.
\end{tabular}
\caption{A cospectral family of graphs}
\label{Fig:OrangeFamily}
\end{center}
\end{figure}

Thus far we have shown that almost all trees have a cospectral mate and given a construction for cospectral graphs with diameter at most $2$. We will now give another family of cospectral graphs that are unicyclic.

To prove this family is cospectral we will be using graph reductions presented in Corollary~\ref{Cor:fkRecursion} to reduce our family for any value of $k$ into ten cases to check. These cases are handled computationally.  

\begin{prop}\label{prop:OrangeFamily}
The graphs shown in Figure~\ref{Fig:OrangeFamily} are $\Dq$-cospectral for all $k \geq 0$. 
\end{prop}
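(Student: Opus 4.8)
The plan is to compute $P_{\Dq,G_1}(x)$ and $P_{\Dq,G_2}(x)$ by repeatedly peeling off pendant paths with Corollary~\ref{Cor:fkRecursion}, and to show the two polynomials coincide for every $k$. The starting observation is structural: each graph in Figure~\ref{Fig:OrangeFamily} is a fixed \emph{core} (the $4$-cycle together with a short, $k$-independent pendant at one of its vertices) to which three pendant paths are attached, the orders of the paths being $k+1,k+2,k+3$ for the family $G_1$ and $k+1,k+2,k+2$ for the family $G_2$. Each attachment point is a cut vertex of the whole graph and stays a cut vertex of the appropriate pendant path after any of the other pendant paths is deleted or truncated, since deleting a vertex of a $4$-cycle leaves it connected; hence Corollary~\ref{Cor:fkRecursion} may be applied to each of the three $k$-dependent paths in turn.

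First I would apply Corollary~\ref{Cor:fkRecursion} three times to each family, once for each $k$-dependent pendant path, using the recurrence $f_j=f_2f_{j-1}-q^2x^2f_{j-2}$ from that corollary. This writes $P_{\Dq,G_1}(x)$ and $P_{\Dq,G_2}(x)$ each as a $\mathbb{Z}[x,q]$-combination in which all of the $k$-dependence is carried by the universal polynomials $f_j$ (for $j$ among $k,k+1,k+2,k+3$) and the remaining factors are the characteristic polynomials $P_{\Dq,H}(x)$ of a fixed finite list of small ``stub'' graphs $H$ that do not depend on $k$ — namely the core with each of the three pendant paths either removed or truncated to a single vertex (and, if it helps shorten the list, with the $k$-independent pendant of the core reduced as well). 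Next I would use the recurrence once more to rewrite every $f_j$ appearing in terms of $f_{k+1}$ and $f_k$ with $k$-free coefficients, putting both characteristic polynomials into a common normal form. Equality of the two families then reduces to finitely many polynomial identities among the characteristic polynomials of the stub graphs; there are ten of them in this bookkeeping, and they are checked by computer algebra. Once these identities hold, $P_{\Dq,G_1}(x)=P_{\Dq,G_2}(x)$ follows for all $k\ge 0$, so the two families are $\Dq$-cospectral. The boundary value $k=0$ needs no special treatment: because $f_0=0$ and $f_1=1$ the recursion handles it uniformly, though one should sanity-check that Figure~\ref{Fig:OrangeFamily} describes honest graphs at $k=0$.

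The routine part is the computer-algebra verification of the ten stub identities. The part requiring care is the bookkeeping: correctly reading the three pendant-path orders and their attachment points off Figure~\ref{Fig:OrangeFamily}, correctly propagating signs and the $q^2x^2$ factors through three nested applications of Corollary~\ref{Cor:fkRecursion} (which produces up to $2^3$ terms per family), and correctly enumerating the stub graphs so that the $f_j$-carrying factors on the two sides genuinely match in the common normal form. As an alternative route one could try to exhibit the cospectrality by an explicit eigenvector correspondence in the spirit of Theorem~\ref{glue} or of the switching construction, but the differing pendant-path lengths in the two families make the pendant-path reduction the cleaner approach here.
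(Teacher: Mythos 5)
Your proposal is correct and follows essentially the same route as the paper: repeated application of Corollary~\ref{Cor:fkRecursion} to peel off the $k$-dependent pendant paths, reduction to a common list of ten fixed stub graphs with coefficients in the $f_i$, normalization of all $f_j$ to combinations of $f_k$ and $f_{k+1}$, and a computer-algebra check of the resulting identity. The bookkeeping you flag as the delicate part is exactly what the paper records in Table~\ref{tab:coefficientsBefore}.
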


\begin{proof}
We repeatedly use Corollary~\ref{Cor:fkRecursion} to reduce the graphs down to a linear combination of a small number of fixed graphs (with coefficients polynomials in the $f_i,x,q$).  We illustrate this process for Figure~\ref{Fig:OrangeFamily}(a).  In the following we will use a graph to represent its characteristic polynomial and where a solid line, 
\begin{tikzpicture}[scale=0.6]
\node[tiny_vertex] (a) at (0,0) {};
\node[tiny_vertex] (b) at (1,0) {};
\draw[thick] (a)--(b);
\end{tikzpicture}, 
indicates an edge in the graph and a dotted line, 
\begin{tikzpicture}[scale=0.6]
\node[tiny_vertex] (a) at (0,0) {};
\node[tiny_vertex] (b) at (1,0) {};
\draw[ultra thick,dotted] (a)--(b);
\end{tikzpicture}, 
indicates a path which has not yet had Corollary~\ref{Cor:fkRecursion} applied.
\begin{align*}
\left(\hspace{-8pt}\begin{array}{c}
\begin{tikzpicture}[scale=0.6]
\node[tiny_vertex] (a) at (-1,0) {};
\node[tiny_vertex] (b) at (0,0) {};
\node[tiny_vertex] (c) at (1,0) {};
\node[tiny_vertex] (d) at (2,0) {};
\node[tiny_vertex] (e) at (0,1) {};
\node[tiny_vertex] (f) at (1,1) {};
\node[tiny_vertex] (g) at (2,1) {};
\node[tiny_vertex] (h) at (1,-1) {};
\draw[thick] (a)--(b)--(c)--(f)--(e)--(b);
\draw[ultra thick,dotted] (c)--(d);
\draw[ultra thick,dotted] (f)--(g);
\draw[ultra thick,dotted] (c)--(h);
\end{tikzpicture}
\end{array}\hspace{-3pt}\right)&=f_{k+1}
\left(\hspace{-8pt}\begin{array}{c}
\begin{tikzpicture}[scale=0.6]
\node[tiny_vertex] (a) at (-1,0) {};
\node[tiny_vertex] (b) at (0,0) {};
\node[tiny_vertex] (c) at (1,0) {};
\node[tiny_vertex] (d) at (2,0) {};
\node[tiny_vertex] (e) at (0,1) {};
\node[tiny_vertex] (f) at (1,1) {};
\node[tiny_vertex] (g) at (2,1) {};
\node[tiny_vertex] (h) at (1,-1) {};
\draw[thick] (a)--(b)--(c)--(f)--(e)--(b);
\draw[ultra thick,dotted] (c)--(d);
\draw[thick] (f)--(g);
\draw[ultra thick,dotted] (c)--(h);
\end{tikzpicture}
\end{array}\hspace{-3pt}\right)-q^2x^2f_k
\left(\hspace{-8pt}\begin{array}{c}
\begin{tikzpicture}[scale=0.6]
\node[tiny_vertex] (a) at (-1,0) {};
\node[tiny_vertex] (b) at (0,0) {};
\node[tiny_vertex] (c) at (1,0) {};
\node[tiny_vertex] (d) at (2,0) {};
\node[tiny_vertex] (e) at (0,1) {};
\node[tiny_vertex] (f) at (1,1) {};
\node[tiny_vertex] (h) at (1,-1) {};
\draw[thick] (a)--(b)--(c)--(f)--(e)--(b);
\draw[ultra thick,dotted] (c)--(d);
\draw[ultra thick,dotted] (c)--(h);
\end{tikzpicture}
\end{array}\hspace{-3pt}\right)\\
&=
f_{k+1}f_{k+3}
\left(\hspace{-8pt}\begin{array}{c}
\begin{tikzpicture}[scale=0.6]
\node[tiny_vertex] (a) at (-1,0) {};
\node[tiny_vertex] (b) at (0,0) {};
\node[tiny_vertex] (c) at (1,0) {};
\node[tiny_vertex][tiny_vertex] (d) at (2,0) {};
\node[tiny_vertex] (e) at (0,1) {};
\node[tiny_vertex] (f) at (1,1) {};
\node[tiny_vertex] (g) at (2,1) {};
\node[tiny_vertex] (h) at (1,-1) {};
\draw[thick] (a)--(b)--(c)--(f)--(e)--(b);
\draw[thick] (c)--(d);
\draw[thick] (f)--(g);
\draw[ultra thick,dotted] (c)--(h);
\end{tikzpicture}
\end{array}\hspace{-3pt}\right)
-q^2x^2f_{k+1}f_{k+2}
\left(\hspace{-8pt}\begin{array}{c}
\begin{tikzpicture}[scale=0.6]
\node[tiny_vertex] (a) at (-1,0) {};
\node[tiny_vertex] (b) at (0,0) {};
\node[tiny_vertex] (c) at (1,0) {};
\node[tiny_vertex] (e) at (0,1) {};
\node[tiny_vertex] (f) at (1,1) {};
\node[tiny_vertex] (g) at (2,1) {};
\node[tiny_vertex] (h) at (1,-1) {};
\draw[thick] (a)--(b)--(c)--(f)--(e)--(b);
\draw[thick] (f)--(g);
\draw[ultra thick,dotted] (c)--(h);
\end{tikzpicture}
\end{array}\hspace{-3pt}\right)\\&\phantom{=}
-q^2x^2f_kf_{k+3}
\left(\hspace{-8pt}\begin{array}{c}
\begin{tikzpicture}[scale=0.6]
\node[tiny_vertex] (a) at (-1,0) {};
\node[tiny_vertex] (b) at (0,0) {};
\node[tiny_vertex] (c) at (1,0) {};
\node[tiny_vertex] (d) at (2,0) {};
\node[tiny_vertex] (e) at (0,1) {};
\node[tiny_vertex] (f) at (1,1) {};
\node[tiny_vertex] (h) at (1,-1) {};
\draw[thick] (a)--(b)--(c)--(f)--(e)--(b);
\draw[thick] (c)--(d);
\draw[ultra thick,dotted] (c)--(h);
\end{tikzpicture}
\end{array}\hspace{-3pt}\right)
+q^4x^4f_kf_{k+2}
\left(\hspace{-8pt}\begin{array}{c}
\begin{tikzpicture}[scale=0.6]
\node[tiny_vertex] (a) at (-1,0) {};
\node[tiny_vertex] (b) at (0,0) {};
\node[tiny_vertex] (c) at (1,0) {};
\node[tiny_vertex] (e) at (0,1) {};
\node[tiny_vertex] (f) at (1,1) {};
\node[tiny_vertex] (h) at (1,-1) {};
\draw[thick] (a)--(b)--(c)--(f)--(e)--(b);
\draw[ultra thick,dotted] (c)--(h);
\end{tikzpicture}
\end{array}\hspace{-3pt}\right)\\ 
&=f_{k+1}f_{k+3}f_{k+2}
\left(\hspace{-8pt}\begin{array}{c}
\begin{tikzpicture}[scale=0.6]
\node[tiny_vertex] (a) at (-1,0) {};
\node[tiny_vertex] (b) at (0,0) {};
\node[tiny_vertex] (c) at (1,0) {};
\node[tiny_vertex][tiny_vertex] (d) at (2,0) {};
\node[tiny_vertex] (e) at (0,1) {};
\node[tiny_vertex] (f) at (1,1) {};
\node[tiny_vertex] (g) at (2,1) {};
\node[tiny_vertex] (h) at (1,-1) {};
\draw[thick] (a)--(b)--(c)--(f)--(e)--(b);
\draw[thick] (c)--(d);
\draw[thick] (f)--(g);
\draw[thick] (c)--(h);
\end{tikzpicture}
\end{array}\hspace{-3pt}\right)
-q^2x^2f_{k+1}f_{k+3}f_{k+1}
\left(\hspace{-8pt}\begin{array}{c}
\begin{tikzpicture}[scale=0.6]
\node[tiny_vertex] (a) at (-1,0) {};
\node[tiny_vertex] (b) at (0,0) {};
\node[tiny_vertex] (c) at (1,0) {};
\node[tiny_vertex][tiny_vertex] (d) at (2,0) {};
\node[tiny_vertex] (e) at (0,1) {};
\node[tiny_vertex] (f) at (1,1) {};
\node[tiny_vertex] (g) at (2,1) {};
\draw[thick] (a)--(b)--(c)--(f)--(e)--(b);
\draw[thick] (c)--(d);
\draw[thick] (f)--(g);
\end{tikzpicture}
\end{array}\hspace{-3pt}\right)\\ &\phantom{=}
-q^2x^2f_{k+1}f_{k+2}f_{k+2}
\left(\hspace{-8pt}\begin{array}{c}
\begin{tikzpicture}[scale=0.6]
\node[tiny_vertex] (a) at (-1,0) {};
\node[tiny_vertex] (b) at (0,0) {};
\node[tiny_vertex] (c) at (1,0) {};
\node[tiny_vertex] (e) at (0,1) {};
\node[tiny_vertex] (f) at (1,1) {};
\node[tiny_vertex] (g) at (2,1) {};
\node[tiny_vertex] (h) at (1,-1) {};
\draw[thick] (a)--(b)--(c)--(f)--(e)--(b);
\draw[thick] (f)--(g);
\draw[thick] (c)--(h);
\end{tikzpicture}
\end{array}\hspace{-3pt}\right)
+q^4x^4f_{k+1}f_{k+2}f_{k+1}
\left(\hspace{-8pt}\begin{array}{c}
\begin{tikzpicture}[scale=0.6]
\node[tiny_vertex] (a) at (-1,0) {};
\node[tiny_vertex] (b) at (0,0) {};
\node[tiny_vertex] (c) at (1,0) {};
\node[tiny_vertex] (e) at (0,1) {};
\node[tiny_vertex] (f) at (1,1) {};
\node[tiny_vertex] (g) at (2,1) {};
\draw[thick] (a)--(b)--(c)--(f)--(e)--(b);
\draw[thick] (f)--(g);
\end{tikzpicture}
\end{array}\hspace{-3pt}\right)\\&\phantom{=}
-q^2x^2f_kf_{k+3}f_{k+2}
\left(\hspace{-8pt}\begin{array}{c}
\begin{tikzpicture}[scale=0.6]
\node[tiny_vertex] (a) at (-1,0) {};
\node[tiny_vertex] (b) at (0,0) {};
\node[tiny_vertex] (c) at (1,0) {};
\node[tiny_vertex] (d) at (2,0) {};
\node[tiny_vertex] (e) at (0,1) {};
\node[tiny_vertex] (f) at (1,1) {};
\node[tiny_vertex] (h) at (1,-1) {};
\draw[thick] (a)--(b)--(c)--(f)--(e)--(b);
\draw[thick] (c)--(d);
\draw[thick] (c)--(h);
\end{tikzpicture}
\end{array}\hspace{-3pt}\right)
+q^4x^4f_kf_{k+3}f_{k+1}
\left(\hspace{-8pt}\begin{array}{c}
\begin{tikzpicture}[scale=0.6]
\node[tiny_vertex] (a) at (-1,0) {};
\node[tiny_vertex] (b) at (0,0) {};
\node[tiny_vertex] (c) at (1,0) {};
\node[tiny_vertex] (d) at (2,0) {};
\node[tiny_vertex] (e) at (0,1) {};
\node[tiny_vertex] (f) at (1,1) {};
\draw[thick] (a)--(b)--(c)--(f)--(e)--(b);
\draw[thick] (c)--(d);
\end{tikzpicture}
\end{array}\hspace{-3pt}\right)
\\&\phantom{=}
+q^4x^4f_kf_{k+2}f_{k+2}
\left(\hspace{-8pt}\begin{array}{c}
\begin{tikzpicture}[scale=0.6]
\node[tiny_vertex] (a) at (-1,0) {};
\node[tiny_vertex] (b) at (0,0) {};
\node[tiny_vertex] (c) at (1,0) {};
\node[tiny_vertex] (e) at (0,1) {};
\node[tiny_vertex] (f) at (1,1) {};
\node[tiny_vertex] (h) at (1,-1) {};
\draw[thick] (a)--(b)--(c)--(f)--(e)--(b);
\draw[thick] (c)--(h);
\end{tikzpicture}
\end{array}\hspace{-3pt}\right)
-q^6x^6f_kf_{k+2}f_{k+1}
\left(\hspace{-8pt}\begin{array}{c}
\begin{tikzpicture}[scale=0.6]
\node[tiny_vertex] (a) at (-1,0) {};
\node[tiny_vertex] (b) at (0,0) {};
\node[tiny_vertex] (c) at (1,0) {};
\node[tiny_vertex] (e) at (0,1) {};
\node[tiny_vertex] (f) at (1,1) {};
\draw[thick] (a)--(b)--(c)--(f)--(e)--(b);
\end{tikzpicture}
\end{array}\hspace{-3pt}\right)
\end{align*}

We can repeat this same process for Figure~\ref{Fig:OrangeFamily}(b) but in that case we have $16$ end cases as all four paths can have Corollary~\ref{Cor:fkRecursion} applied.  In the end there are ten possible reduced graphs.  If we combine all the terms together then we get the coefficients shown in Table~\ref{tab:coefficientsBefore}.  

We can apply the recursion from Corollary~\ref{Cor:fkRecursion} to reduce all the $f_i$ terms to combinations of $f_k$ and $f_{k+1}$ (note the $f_2$ terms can be explicitly rewritten in terms of $q$ and $x$).  Doing this for both graphs (remembering to replace each of the graphs shown in Table~\ref{tab:coefficientsBefore} with the appropriate characteristic polynomial) will result in a polynomial expression with terms involving $q,x,f_k,f_{k+1}$.  Using a computer algebra system these were checked to be the same for both graphs, establishing cospectrality.
%
%
%
%
%
\end{proof}

Similar proof techniques using Corollary~\ref{Cor:fkRecursion} and Proposition~\ref{Prop:leaves} can be used to show that similar families of graphs are cospectral. 
\begin{table}[!h]
\begin{center}
\caption{Coefficients for the characteristic polynomial for Proposition~\ref{prop:OrangeFamily}.}
\label{tab:coefficientsBefore}
\begin{tabular}{|c|c|c|}
\hline
     & $G_1$ & $G_2$ \\
     \hline
     $\displaystyle\begin{array}{c}\begin{tikzpicture}[scale = 0.5]
\node[vertex] (a1) at (0,0) {};
\node[vertex] (a2) at (1,0) {};
\node[vertex] (a3) at (0,1) {};
\node[vertex] (a4) at (1,1) {};
\draw[thick] 
(a1)--(a2)--(a4)--(a3)--(a1);

\end{tikzpicture}\end{array}$& $\displaystyle\begin{array}{c}
   0
\end{array}$ &
$\displaystyle\begin{array}{c}
   q^8x^8f_{k}f_{k+1}^2
\end{array}$\\
    \hline
    $\displaystyle\begin{array}{c}\begin{tikzpicture}[scale = 0.5]
\node[vertex] (a1) at (0,0) {};
\node[vertex] (a2) at (1,0) {};
\node[vertex] (a3) at (0,1) {};
\node[vertex] (a4) at (1,1) {};
\node[vertex] (b1) at (2, 1) {};
\draw[thick] 
(a1)--(a2)--(a4)--(a3)--(a1)
(a4)--(b1);

\end{tikzpicture}\end{array}$& $\displaystyle\begin{array}{c}
   -q^6x^6f_{k}f_{k+1}f_{k+2}
\end{array}$  &
$\displaystyle\begin{array}{c}
   -q^6x^6f_2f_{k}f_{k+1}^2\\
   -2q^2x^2f_{k}f_{k+1}f_{k+2}\\
   -q^6x^6f_{k+1}^3
\end{array}$\\ \hline
    $\displaystyle\begin{array}{c}\begin{tikzpicture}[scale = 0.5]
\node[vertex] (a1) at (0,0) {};
\node[vertex] (a2) at (1,0) {};
\node[vertex] (a3) at (0,1) {};
\node[vertex] (a4) at (1,1) {};
\node[vertex] (b1) at (2, 1) {};
\node[vertex] (c1) at (2, 0) {};
\draw[thick] 
(a1)--(a2)--(a4)--(a3)--(a1)
(a2)--(c1)
(a4)--(b1);
\end{tikzpicture}\end{array}$ & $\displaystyle\begin{array}{c}
   q^4x^4f_{k}f_{k+2}^2\\
   q^4x^4f_{k}f_{k+1}f_{k+3}
\end{array}$ & 
$\displaystyle\begin{array}{c}
   q^4x^4f_{k+1}^2f_{k+2}\\
   q^4x^4f_2f_{k}f_{k+1}f_{k+2}\\
   q^4x^4f_{k}f_{k+2}^2
\end{array}$\\ \hline
$\displaystyle\begin{array}{c}
\begin{tikzpicture}[scale = 0.5]
\node[vertex] (a1) at (0,0) {};
\node[vertex] (a2) at (1,0) {};
\node[vertex] (a3) at (0,1) {};
\node[vertex] (a4) at (1,1) {};
\node[vertex] (c1) at (2, 0) {};
\node[vertex] (d1) at (1, -1) {};
\draw[thick]
(a1)--(a2)--(a4)--(a3)--(a1)
(d1)--(a2)--(c1);
\end{tikzpicture}\end{array}$ & 
$\displaystyle\begin{array}{c}
   0
\end{array}$& $\displaystyle\begin{array}{c}
   q^4x^4f_{2}f_{k}f_{k+1}f_{k+2}
\end{array}$
\\\hline
$\displaystyle\begin{array}{c}\begin{tikzpicture}[scale = 0.5]
\node[vertex] (a1) at (0,0) {};
\node[vertex] (a2) at (1,0) {};
\node[vertex] (a3) at (0,1) {};
\node[vertex] (a4) at (1,1) {};
\node[vertex] (a5) at (-1,0) {};
\node[vertex] (b1) at (2, 1) {};
\draw[thick] 
(a5)--(a1)--(a2)--(a4)--(a3)--(a1)
(a4)--(b1);
\end{tikzpicture}\end{array}$ & $\displaystyle\begin{array}{c}
   q^4x^4f_{k+1}^2f_{k+2}
\end{array}$ & $\displaystyle\begin{array}{c}
   q^4x^4f_{2}f_{k+1}^3\\
   q^4x^4f_{k+1}^2f_{k+2}
\end{array}$\\ \hline
$\displaystyle\begin{array}{c}\begin{tikzpicture}[scale = 0.5]
\node[vertex] (a1) at (0,0) {};
\node[vertex] (a2) at (1,0) {};
\node[vertex] (a3) at (0,1) {};
\node[vertex] (a4) at (1,1) {};
\node[vertex] (a5) at (-1,0) {};
\node[vertex] (b1) at (2, 1) {};
\node[vertex] (d1) at (1, -1) {};
\draw[thick] 
(a5)--(a1)--(a2)--(a4)--(a3)--(a1)
(d1)--(a2)
(a4)--(b1);
\end{tikzpicture}\end{array}$ & $\displaystyle\begin{array}{c}
   -q^2x^2f_{k+1}f_{k+2}^2 \\
   -q^2x^2f_{k+1}^2f_{k+3}
\end{array}$ &
$\displaystyle\begin{array}{c}
   -q^2x^2f_2f_{k+1}^2f_{k+2} \\
   -q^2x^2f_{k+1}f_{k+2}^2
\end{array}$\\ \hline
$\displaystyle\begin{array}{c}\begin{tikzpicture}[scale = 0.5]
\node[vertex] (a1) at (0,0) {};
\node[vertex] (a2) at (1,0) {};
\node[vertex] (a3) at (0,1) {};
\node[vertex] (a4) at (1,1) {};
\node[vertex] (a5) at (-1,0) {};
\node[vertex] (c1) at (2, 0) {};
\node[vertex] (d1) at (1, -1) {};
\draw[thick] 
(a5)--(a1)--(a2)--(a4)--(a3)--(a1)
(d1)--(a2)--(c1);
\end{tikzpicture}\end{array}$  & $\displaystyle\begin{array}{c}-q^2x^2f_{k}f_{k+2}f_{k+3}\end{array}$ & $\displaystyle\begin{array}{c}-q^2x^2 f_2 f_k f_{k+2}^2\end{array}$\\
    \hline
$\displaystyle\begin{array}{c}\begin{tikzpicture}[scale = 0.5]
\node[vertex] (a1) at (0,0) {};
\node[vertex] (a2) at (1,0) {};
\node[vertex] (a3) at (0,1) {};
\node[vertex] (a4) at (1,1) {};
\node[vertex] (a5) at (-1,0) {};
\node[vertex] (b1) at (2, 1) {};
\node[vertex] (d1) at (0, -1) {};

\draw[thick] 
(a5)--(a1)--(a2)--(a4)--(a3)--(a1)--(d1)
(a4)--(b1);
\end{tikzpicture}\end{array}$ & $\displaystyle\begin{array}{c}0\end{array}$ & $\displaystyle\begin{array}{c}-q^2x^2f_2f_{k+1}^2f_{k+2}\end{array}$\\
    \hline
$\displaystyle\begin{array}{c}\begin{tikzpicture}[scale = 0.5]
\node[vertex] (a1) at (0,0) {};
\node[vertex] (a2) at (1,0) {};
\node[vertex] (a3) at (0,1) {};
\node[vertex] (a4) at (1,1) {};
\node[vertex] (a5) at (-1,0) {};
\node[vertex] (b1) at (2, 1) {};
\node[vertex] (c1) at (2, 0) {};
\node[vertex] (d1) at (1, -1) {};
\draw[thick] 
(a5)--(a1)--(a2)--(a4)--(a3)--(a1)
(d1)--(a2)--(c1)
(a4)--(b1);
\end{tikzpicture}\end{array}$ & $\displaystyle\begin{array}{c}f_{k+1}f_{k+2}f_{k+3}\end{array}$ & $\displaystyle\begin{array}{c}0\end{array}$\\
    \hline
$\displaystyle\begin{array}{c}\begin{tikzpicture}[scale = 0.5]
\node[vertex] (a1) at (0,0) {};
\node[vertex] (a2) at (1,0) {};
\node[vertex] (a3) at (0,1) {};
\node[vertex] (a4) at (1,1) {};
\node[vertex] (a5) at (-1,0) {};
\node[vertex] (b1) at (2, 1) {};
\node[vertex] (c1) at (2, 0) {};
\node[vertex] (d1) at (0, -1) {};
\draw[thick] 
(a5)--(a1)--(a2)--(a4)--(a3)--(a1)--(d1)
(a2)--(c1)
(a4)--(b1);
\end{tikzpicture}\end{array}$ & $\displaystyle\begin{array}{c}0\end{array}$ & $\displaystyle\begin{array}{c}f_2f_{k+1}f_{k+2}^2\end{array}$\\
    \hline
\end{tabular}
\end{center}
\end{table}

\section{Concluding remarks}\label{sec:future}
We have looked at some basic properties of the exponential distance matrix.  Including some information that can be gathered from the characteristic polynomial which involves coefficients in $q$.

One future direction for research is what happens if we fix a value for $q$ (this was done in special cases in Proposition~\ref{prop:distpoly} and Lemma~\ref{lem:comp}).  We note the following.

\begin{prop}
If $G$ and $H$ are $\Dq$-cospectral for a transcendental value of $q$, then they are cospectral for all values of $q$.
\end{prop}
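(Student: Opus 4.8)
The plan is to exploit the fact that the characteristic polynomial of $\Dq$ depends polynomially on $q$. First I would note that for a fixed graph $G$ on $n$ vertices every entry of $\Dq^G$ is either $1$, $0$, or of the form $q^{d}$ for a positive integer $d = \dist_G(u,v)$; in particular every entry of $xI - \Dq^G$ lies in $\mathbb{Z}[q,x]$. Expanding $\det(xI - \Dq^G)$ via the Leibniz formula then shows that $P_{\Dq,G}(x) \in \mathbb{Z}[q,x]$, so we may write
\[
P_{\Dq,G}(x) = \sum_{k=0}^{n} c_k^G(q)\,x^k
\]
with each $c_k^G(q)$ a polynomial in $q$ having integer (in particular rational) coefficients. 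The same holds for $H$, which has the same number of vertices since cospectrality forces $|V(G)| = |V(H)|$.

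Next I would consider $F(q,x) \coloneqq P_{\Dq,G}(x) - P_{\Dq,H}(x) = \sum_{k=0}^{n}\big(c_k^G(q) - c_k^H(q)\big)x^k \in \mathbb{Z}[q,x]$, and let $q_0$ be the transcendental value at which $G$ and $H$ are $\Dq$-cospectral. By definition this means $P_{\mathcal{D}_{q_0},G}(x) = P_{\mathcal{D}_{q_0},H}(x)$ as polynomials in $x$, i.e.\ $c_k^G(q_0) = c_k^H(q_0)$ for every $k$. Hence each $c_k^G(q) - c_k^H(q)$ is a polynomial with rational coefficients vanishing at $q_0$; since $q_0$ is transcendental, every such polynomial must be identically zero. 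Therefore $F(q,x) \equiv 0$, i.e.\ $P_{\Dq,G}(x) = P_{\Dq,H}(x)$ holds as an identity in $\mathbb{Z}[q,x]$, and specializing to any value of $q$ we conclude that $G$ and $H$ are $\Dq$-cospectral for that value.

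I do not expect a genuine obstacle in this argument. The only point that warrants a little care is the opening step --- confirming that the coefficients of $P_{\Dq,G}$ really are polynomials in $q$ with rational coefficients and not, say, algebraic functions of $q$ --- and this is immediate once one writes the determinant out entrywise. (If one instead wanted to track the whole spectrum directly, one could invoke that the eigenvalues vary algebraically with the entries, but passing through the characteristic polynomial is cleaner and avoids any multiplicity bookkeeping.)
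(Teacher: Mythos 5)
Your argument is correct and is essentially the same as the paper's proof: both observe that each coefficient of $x^k$ in $P_{\Dq,G}(x)-P_{\Dq,H}(x)$ is a polynomial in $q$ with integer coefficients that vanishes at the transcendental value, and hence is identically zero. Your write-up simply spells out the Leibniz-expansion justification that the paper leaves implicit.
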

\begin{proof}
Let $q^*$ be the transcendental value for which the graphs are $\Dq$-cospectral.  Then the coefficient of $x^k$ in $P_{\Dq,G}(x)-P_{\Dq,H}(x)$ is a polynomial in $q$ with $q^*$ as a root.  But because $q^*$ is transcendental this is only possible if the polynomial of $x^k$ is $0$, and since this hold for all $k$ we have $P_{\Dq,G}(x)-P_{\Dq,H}(x)=0$.
\end{proof}

The preceding can also be modified to state that if two graphs are $\Dq$-cospectral for sufficiently many values of $q$ (at least $n$ times the diameter is sufficient), then they are cospectral for all values of $q$.

Conversely, there are some graphs that are cospectral for some values of $q$ but not all (see Figure~\ref{fig: specific q} and Figure~\ref{fig:specific q 2}).
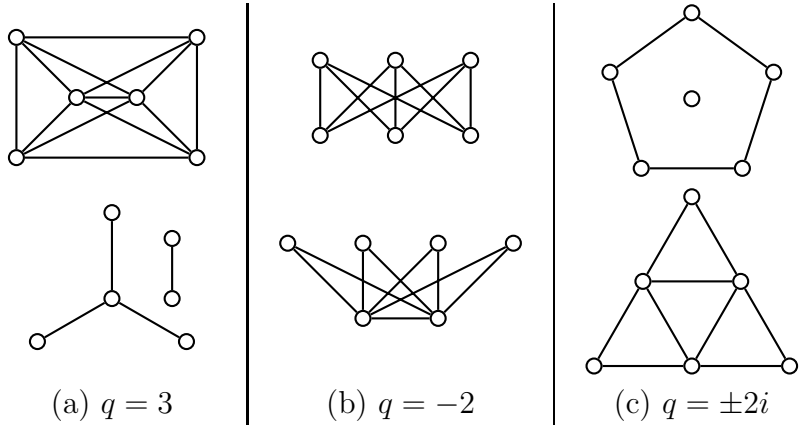
\begin{figure}[!h]
    \centering
    \begin{tabular}{c|c|c}
        \begin{tabular}{c}
\begin{tikzpicture}[rotate = 90,scale = 0.8]
\node[vertex](1) at (0,0) {};
\node[vertex](2) at (2,0) {};
\node[vertex](3) at (1,-1) {};
\node[vertex](4) at (1,-2) {};
\node[vertex](5) at (0,-3) {};
\node[vertex](6) at (2,-3) {};
\draw[thick](1) -- (2) (1) -- (3) (1) -- (4) (1) -- (5) (2) -- (3) (2) -- (4) (2) -- (6) (5) -- (3) (5) -- (4) (5) -- (6) (6) -- (4) (6) -- (3) (3) -- (4);
\end{tikzpicture}

\vspace{10pt}

\\

\begin{tikzpicture}[scale = 0.8]
\node[vertex](1) at (0:0) {};
\node[vertex](2) at (90:1.43) {};
\node[vertex](3) at (210:1.43) {};
\node[vertex](4) at (330:1.43) {};
\node[vertex](5) at (1,0) {};
\node[vertex](6) at (1,1) {};
\draw[thick](1) -- (2) (1) -- (3) (1) -- (4) (5) -- (6);
\end{tikzpicture}
\end{tabular}
&
\begin{tabular}{c}
\begin{tikzpicture}
\node[vertex](1) at (0,0) {};
\node[vertex](2) at (1,0) {};
\node[vertex](3) at (2,0) {};
\node[vertex](4) at (0,-1) {};
\node[vertex](5) at (1,-1) {};
\node[vertex](6) at (2,-1) {};
\draw[thick](1) -- (4) (1) -- (5) (1) -- (6) (2) -- (4) (2) -- (5) (2) -- (6) (3) -- (4) (3) -- (5) (3) -- (6);
\end{tikzpicture}
\vspace{30pt}
\\
\begin{tikzpicture}
\node[vertex](1) at (0,0) {};
\node[vertex](2) at (1,0) {};
\node[vertex](3) at (2,0) {};
\node[vertex](4) at (3,0) {};
\node[vertex](5) at (1,-1) {};
\node[vertex](6) at (2,-1) {};
\draw[thick](1) -- (5) (1) -- (6) (2) -- (5) (2) -- (6) (3) -- (5) (3) -- (6) (4) -- (5) (4) -- (6) (5) -- (6);
\end{tikzpicture}
\end{tabular}
&
\begin{tabular}{c}
\begin{tikzpicture}[scale = 0.8, rotate = 18]
\node[vertex](1) at (0:1.43) {};
\node[vertex](2) at (72:1.43) {};
\node[vertex](3) at (144:1.43) {};
\node[vertex](4) at (216:1.43) {};
\node[vertex](5) at (288:1.43) {};
\node[vertex](6) at (0:0) {};
\draw[thick] (1) -- (2) -- (3) -- (4) -- (5) -- (1);
\end{tikzpicture}
\\
\begin{tikzpicture}[scale = 0.65]
\node[vertex](1) at (0,0) {};
\node[vertex](2) at (-1,{-sqrt(3)}) {};
\node[vertex](3) at (1,{-sqrt(3)}) {};
\node[vertex](4) at (-2,{-2*sqrt(3)}) {};
\node[vertex](5) at (0,{-2*sqrt(3)}) {};
\node[vertex](6) at (2,{-2*sqrt(3)}) {};
\draw[thick](1) -- (2) -- (4) (1) -- (3) -- (6) (2) -- (3) (4) -- (5) -- (6) (2) -- (5) (3) -- (5);
\end{tikzpicture}
\end{tabular}\\
(a) $q = 3$ & (b) $q = -2$ & (c) $q = \pm 2 i$
\end{tabular}
\caption{All pairs of graphs up to six vertices that are only $\Dq$-cospectral for particular $q$.}
\label{fig: specific q}
\end{figure}
\begin{figure}
\centering
\begin{tikzpicture}[scale = 0.6]
        \node[vertex](0) at (0:0) {};
        \node[vertex](1) at (0:3) {};
        \node[vertex](2) at (60:3) {};
        \node[vertex](3) at (120:3) {};
        \node[vertex](4) at (180:3) {};
        \node[vertex](5) at (240:3) {};
        \node[vertex](6) at (300:3) {};
        \draw[thick](0)--(1) (0) -- (2) (0) -- (3) (0) -- (4) (0) -- (5) (0) -- (6)(1) -- (2) (3)--(4) (5)--(6);
    \end{tikzpicture}
    \hfil
    \begin{tikzpicture}[scale = 0.7]
        \node[vertex](0) at (0,0) {};
        \node[vertex](1) at (2,0) {};
        \node[vertex](2) at (0,2) {};
        \node[vertex](3) at (0,-2) {};
        \node[vertex](4) at (-2,-2) {};
        \node[vertex](5) at (-2,0) {};
        \node[vertex](6) at (-2,2) {};
        \draw[thick](0) -- (2) (0) -- (3) (0) -- (4) (0) -- (5) (0) -- (6)(4) -- (5)(3)--(5) (3)--(4) (5)--(6)(2)--(6)(2)--(5);
    \end{tikzpicture}
    \caption{Example of graphs $\Dq$-cospectral for $q = \frac12$.}
    \label{fig:specific q 2}
\end{figure} 

This shows more information getting lost in the spectrum, like the number of components when we fix $q$. It is also possible to have $\mathcal{D}_q^G$ be cospectral with $\mathcal{D}_r^H$ where $q\ne r$ and $G$ and $H$  are non-isomorphic. For example, for $n \ge 3$, the exponential distance matrix of $K_n \sq K_n$ with $q = \frac{n-2}{n-1}$ and the exponential distance matrix of $K_{(n-1)^2+1} \cup (2n-2) K_1$ with $q = \frac{n^2-2n}{(n-1)^2}$ have the same spectrum (see Figure~\ref{fig:different q}).

\begin{figure}
    \centering
\begin{tabular}{c@{\hspace{1in}}c}
\begin{tikzpicture}
\node[vertex](1) at (0:1.43) {};
\node[vertex](2) at (60:1.43) {};
\node[vertex](3) at (120:1.43) {};
\node[vertex](4) at (180:1.43) {};
\node[vertex](5) at (240:1.43) {};
\node[vertex](6) at (300:1.43) {};
\node[vertex](7) at (0:.5) {};
\node[vertex](8) at (120:.5) {};
\node[vertex](9) at (240:.5) {};
\draw[thick](1) -- (2) -- (3) -- (4) -- (5) -- (6) -- (1) (3) -- (9) (3) -- (7) (1) -- (9) (1) -- (8) (2) -- (7) (2) -- (8) (2) -- (7) (4) -- (9) (4) -- (8) (5) -- (7) (5) -- (8) (6) -- (7) (6) -- (9);
\end{tikzpicture}
&
\begin{tikzpicture}
\node[vertex](1) at (18:1.43) {};
\node[vertex](2) at (72+18:1.43) {};
\node[vertex](3) at (144+18:1.43) {};
\node[vertex](4) at (216+18:1.43) {};
\node[vertex](5) at (288+18:1.43) {};
\draw[thick](1) -- (2) -- (3) -- (4) -- (5) -- (1) (1) -- (4) (1) -- (3) (2) -- (5) (2) -- (4) (3) -- (5);
\node[vertex](6) at (2,1.5) {};
\node[vertex](7) at (2,0.5) {};
\node[vertex](8) at (2,-.5) {};
\node[vertex](9) at (2,-1.5) {};
\end{tikzpicture}\\
(a) $C_3\sq C_3, q = \frac{1}{2}$ & (b) $K_3 \cup 4 K_1, q = \frac{3}{4}$
\end{tabular}
\caption{A pair of graphs that have the same $\Dq$-spectrum of $\{4,1,1,1,1,\frac12,\frac12,\frac12,\frac12\}$.}
\label{fig:different q}
\end{figure}
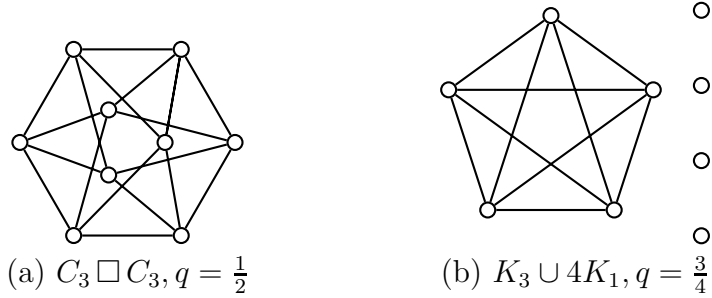

More exploration into what is possible for $q$ fixed or restricted to a few values would be interesting.


Another avenue for exploration is what happens to the spectral radius as we change $q$. There exists graphs where the ordering of the graphs according to spectral radius changes as $q$ changes (see Figure \ref{fig:spectral radius}). Understanding more about this phenomenon would be interesting.

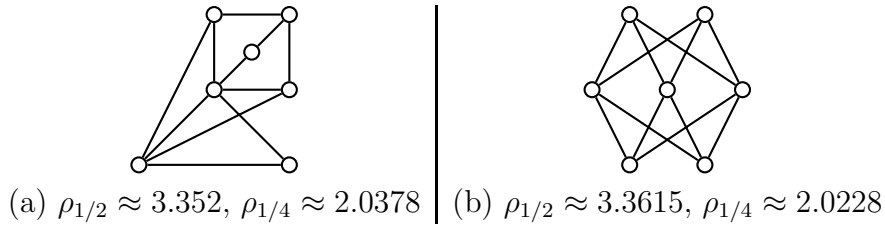
\begin{figure}[!h]
    \centering
    \begin{tabular}{c|c}
         
    \begin{tikzpicture}
\node[vertex](1) at (0,0) {};
\node[vertex](2) at (1,0) {};
\node[vertex](3) at (0.5,-.5) {};
\node[vertex](4) at (0,-1) {};
\node[vertex](5) at (1,-1) {};
\node[vertex](6) at (-1,-2) {};
\node[vertex](7) at (1,-2) {};
\draw[thick](1) -- (2) (1) -- (4) (1) -- (6) (2) -- (3) (2) -- (5) (3) -- (4) (4) -- (5) (4) -- (6) (4) -- (7) (6) -- (7) (5) -- (6);
\end{tikzpicture}
&
\begin{tikzpicture}
\node[vertex](1) at (0,0) {};
\node[vertex](2) at (1,0) {};
\node[vertex](3) at (-.5,-1) {};
\node[vertex](4) at (0.5,-1) {};
\node[vertex](5) at (1.5,-1) {};
\node[vertex](6) at (0,-2) {};
\node[vertex](7) at (1,-2) {};
\draw[thick](1) -- (5) (1) -- (4) (1) -- (3) (2) -- (3) (2) -- (4) (2) -- (5) (4) -- (6) (4) -- (7) (3) -- (6) (3) -- (7) (5) -- (6) (5) -- (7);
\end{tikzpicture}\\
(a) $\rho_{1/2}\approx 3.352$, $\rho_{1/4}\approx 2.0378$ & (b) $\rho_{1/2}\approx 3.3615$, $\rho_{1/4}\approx 2.0228$
\end{tabular}
    \caption{Graphs whose relative ordering for spectral radius, $\rho_q$, changes}
    \label{fig:spectral radius}
\end{figure}

Finally, we believe that there is still more information about a graph that can be derived from its exponential distance matrix.  Also, there are more families of graphs whose spectra can be explicitly computed and cospectral constructions found.  We look forward to seeing more work in this area.

\end{document}